\def\Xint#1{\mathchoice
	{\XXint\displaystyle\textstyle{#1}}%
	{\XXint\textstyle\scriptstyle{#1}}%
	{\XXint\scriptstyle\scriptscriptstyle{#1}}%
	{\XXint\scriptscriptstyle\scriptscriptstyle{#1}}%
	\!\int}
\def\XXint#1#2#3{{\setbox0=\hbox{$#1{#2#3}{\int}$}
		\vcenter{\hbox{$#2#3$}}\kern-.5\wd0}}
\def\dashint{\Xint-}
\newtheorem{Def}{Definition}[section]
\newtheorem{Eg}[Def]{Example}
\newtheorem{Lem}[Def]{Lemma}
\newtheorem{Thm}[Def]{Theorem}
\newtheorem{Cor}[Def]{Corollary}
\newtheorem{Ass}[Def]{Assumption}
\theoremstyle{definition}
\newtheorem{Rem}[Def]{Remark}
\newcommand{\p}{\mathbb{P}}
\newcommand{\e}{\mathbb{E}}
\newcommand{\real}{\mathbb{R}}
\newcommand{\n}{\mathbb{N}}
\newcommand{\1}{{\bf 1}}
\newcommand{\rd}{\mathrm{d}}
\begin{document}

\title{
$L^{q}$-error estimates for approximation of irregular functionals of random vectors
%--Multi-dimensional Avikainen's estimates
%Multi-dimensional Avikainen's estimates -- irregular functionals of random variables
}
\author{
	Dai Taguchi\footnote{
		Research Institute for Interdisciplinary Science, department of mathematics,
		Okayama University,
		3-1-1
		Tsushima-naka,
		Kita-ku
		Okayama
		700-8530,
		Japan,
		email~:~\texttt{dai.taguchi.dai@gmail.com}
	}
	,~
	Akihiro Tanaka\footnote{
		Graduate School of Engineering Science,
		Osaka University,
		1-3, Machikaneyama-cho, Toyonaka,
		Osaka, 560-8531, Japan
		/
		Sumitomo Mitsui Banking Corporation,
		1-1-2, Marunouchi, Chiyoda-ku,
		Tokyo, 100-0005, Japan,
		email~:~\texttt{tnkaki2000@gmail.com}
	}
	~and~
	Tomooki Yuasa\footnote{
		Department of Mathematical Sciences,
		Ritsumeikan University,
		1-1-1,
		Nojihigashi, Kusatsu,
		Shiga, 525-8577, Japan,
		email~:~\texttt{to-yuasa@fc.ritsumei.ac.jp}
	}
}
%\date{}
\maketitle
\begin{abstract}
	In \cite{Av09}, Avikainen showed that, for any  $p,q \in [1,\infty)$, and any function $f$ of bounded variation in $\mathbb{R}$, it holds that 
	$
	\mathbb{E}[|f(X)-f(\widehat{X})|^{q}]
	\leq
	C(p,q)
	\mathbb{E}[|X-\widehat{X}|^{p}]^{\frac{1}{p+1}}
	$, where $X$ is a one-dimensional random variable with a bounded density, and  $\widehat{X}$ is an arbitrary  random variable.
	In this article, we will provide multi-dimensional versions of this estimate for functions of bounded variation in $\mathbb{R}^{d}$, Orlicz--Sobolev spaces, Sobolev spaces with variable exponents, and fractional Sobolev spaces.
	The main idea of our arguments is to use the Hardy--Littlewood maximal estimates and pointwise characterizations of these function spaces.
	We apply our main results to analyze the numerical approximation for some irregular functionals of the solution of stochastic differential equations.
	%, and to estimate the $L^{2}$-time regularity of decoupled forward--backward stochastic differential equations with irregular terminal conditions.
	%numerical schemes for forward--backward stochastic differential equations based on machine learning.
	\\
	\textbf{2020 Mathematics Subject Classification}: 65C30; 60H35; 65C05; 26A45; 46E35\\
	%65C05 Monte Carlo methods
	%26A45 Functions of bounded variation, generalizations
	%46E35 Sobolev spaces and other spaces of \smooth" functions, embedding theorems, trace theorems
	%65C30:Numerical solutions to stochastic differential and integral equations
	%60H35 Computational methods for stochastic equations
	%41A25 Rate of convergence, degree of approximation
	%91G60 Numerical methods (including Monte Carlo methods)
	%65Cxx Probabilistic methods, simulation and stochastic dierential equations For theoretical aspects, see 68U20 and 60H35
	%62P05 Applications to actuarial sciences and nancial mathematics
	\textbf{Keywords}:
	Avikainen's estimates;
	functions of bounded variation in $\real^{d}$;
	Orlicz--Sobolev spaces;
	Sobolev spaces with variable exponents;
	fractional Sobolev spaces;
	Hardy--Littlewood maximal estimates;
	stochastic differential equations;
	Euler--Maruyama scheme;
	%rate of convergence;
	multilevel Monte Carlo method.
	%$L^{2}$-time regularity of backward stochastic differential equations.
\end{abstract}

%\tableofcontents

\section{Introduction}\label{sec_1}

Numerical analysis for stochastic differential equations (SDEs) is a  significant issue in stochastic calculus, from both theory and practical points of view.
In order to approximate the solution of the SDE $\rd X(t)=b(X(t)) \rd t+\sigma(X(t))\rd B(t)$ driven by a Brownian motion $B$, one often uses the Euler--Maruyama scheme $X^{(h)}$ with time step $h>0$ (see, \cite{KP}).
The convergence of this scheme  has been widely studied, see, e.g.,  \cite{BaTa96, GoLa08, Gu06, KoMa02, KoMe17, TaTu90} for the  weak convergence;  \cite{Av09,BaHuYu19,BuDaGe19,GyRa11,LeSz17b,MeTa,MuYa20,NT1,NT2,Ta20} for the strong convergence.
There are many modifications of the Euler--Maruyama scheme, see, e.g, \cite{Al13,HiMaSt02,NeSz14} for backward schemes and \cite{HuJeKl12,Sa16} for tamed schemes.
In particular, Bally and Talay \cite{BaTa96} proved that whenever the coefficients $b$ and $\sigma$ satisfy some certain regularity conditions, $|\e[f(X(T))]-\e[f(X^{(h)}(T))]|\leq Ch$ for any bounded measurable function $f$, and time step $h=T/n$, (see, also \cite{GoLa08,Gu06,KoMa02,TaTu90}).

In this article, for an irregular function $f$ (e.g., bounded variation or Sobolev differentiable) and $q \in [1,\infty)$, we are interested in the rate of convergence to zero of
\begin{align}\label{MSE_0}
	\e\left[
		\left|
			f(X(T))
			-
			f(X^{(h)}(T))
		\right|^{q}
	\right]
\end{align}
as $h \to 0$ (see, Theorem \ref{Cor_0}).
This rate is needed to apply the multilevel Monte Carlo  (MLMC) method, whose computational cost is much lower than that of classical (single level) Monte Carlo method, for $\e\left[\left|f(X(T))\right|\right]$.
Heinrich \cite{He01} firstly introduced the MLMC method for parametric integrations.
Later, Giles \cite{Gi08} developed the method for SDEs based on the Euler--Maruyama scheme $X^{(h)}$ as a generalization of the statistical Romberg method proposed by Kebaier \cite{Ke05}.
%The MLMC method can reduce the computational complexity for the mean squared error $\e[|\widehat{Y}-\e[f(X(T)]|^2]$, where $\widehat{Y}$ is an estimator for $f(X(T))$, and requires the rate of convergence for \eqref{MSE_0} with $q=2$.
If the function $f$ is $\alpha$-H\"older continuous for some $\alpha \in (0,1]$, then \eqref{MSE_0} can be bounded from above by $\|f\|_{\alpha}^{q} \e[|X(T)-X^{(h)}(T)|^{q \alpha}]$, where $\|f\|_{\alpha}:=\sup_{x \neq y} \frac{|f(x)-f(y)|}{|x-y|^{\alpha}}$, and thus its  rate of convergence to zero can be obtained under some suitable assumptions on the coefficients $b$ and $\sigma$ (see, e.g., \cite{BaHuYu19, BuDaGe19,GyRa11,KP,LeSz17b,MeTa,MuYa20,NT1,NT2}).
However, if the function $f$ is irregular, it is not clear how to derive the rate of convergence to zero.

Motivated by such a problem, Avikainen \cite{Av09} proved the following remarkable inequality.
\begin{Thm}[Theorem 2.4 (i) in \cite{Av09}]
	Let $X$ be a one-dimensional random variable with a bounded density function $p_{X}$, and let $f$ be a real valued function of bounded variation in $\real$.
	Then for any  $p,q \in [1,\infty)$ and random variable $\widehat{X}$, it holds that
	\begin{align}\label{Av_0}
		\e\left[
			\left|
				f(X)
				-
				f(\widehat{X})
			\right|^{q}
		\right]
		\leq
		3^{q+1}
		V(f)^{q}
		\left(
			\sup_{x \in \real}
			p_{X}(x)
		\right)^{\frac{p}{p+1}}
		\e
		\left[
			\left|
				X
				-
				\widehat{X}
			\right|^{p}
		\right]^{\frac{1}{p+1}},
	\end{align}
	where $V(f)$ is the total variation of $f$.
\end{Thm}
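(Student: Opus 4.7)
My plan is to introduce a truncation parameter $\varepsilon>0$, split $\e[|f(X)-f(\widehat{X})|^{q}]$ according to whether $|X-\widehat{X}|\leq\varepsilon$ or not, bound each piece separately, and finally optimize in $\varepsilon$. The trivial global estimate $|f(x)-f(y)|\leq V(f)$, which holds for any $x,y\in\real$ directly from the definition of total variation, serves as the crude baseline, while the bounded density of $X$ is what will let me do strictly better on the ``near-diagonal'' event where the irregularity of $f$ actually matters.

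On the far set $\{|X-\widehat{X}|>\varepsilon\}$, I would simply combine $|f(X)-f(\widehat{X})|^{q}\leq V(f)^{q}$ with Markov's inequality to get
\begin{equation*}
\e\bigl[|f(X)-f(\widehat{X})|^{q}\1_{\{|X-\widehat{X}|>\varepsilon\}}\bigr]
\leq V(f)^{q}\,\frac{\e[|X-\widehat{X}|^{p}]}{\varepsilon^{p}}.
\end{equation*}
On the near set $\{|X-\widehat{X}|\leq\varepsilon\}$, I would peel off $V(f)^{q-1}$ via the same global bound and then invoke the sharper pointwise inequality $|f(X)-f(\widehat{X})|\leq V(f;[X-\varepsilon,X+\varepsilon])$, valid because both $X$ and $\widehat{X}$ lie in that interval; here $V(f;I)$ denotes the total variation of $f$ on $I$.

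The key calculation is then the expectation of the local total variation. Representing $V(f;\cdot)$ as a finite positive Borel measure $\mu$ on $\real$ satisfying $\mu(\real)=V(f)$ and $\mu([a,b])=V(f;[a,b])$, and bounding $p_{X}\leq\|p_{X}\|_{\infty}$, Fubini gives
\begin{equation*}
\e\bigl[V(f;[X-\varepsilon,X+\varepsilon])\bigr]
= \int_{\real}\!\int_{\real}\1_{[x-\varepsilon,x+\varepsilon]}(u)\,\mu(\rd u)\,p_{X}(x)\,\rd x
\leq 2\varepsilon\,\|p_{X}\|_{\infty}\,V(f).
\end{equation*}
Adding the two contributions leads to the master inequality
\begin{equation*}
\e\bigl[|f(X)-f(\widehat{X})|^{q}\bigr]
\leq V(f)^{q}\Bigl(2\varepsilon\|p_{X}\|_{\infty}+\frac{\e[|X-\widehat{X}|^{p}]}{\varepsilon^{p}}\Bigr),
\end{equation*}
and the choice $\varepsilon^{p+1}\sim\e[|X-\widehat{X}|^{p}]/\|p_{X}\|_{\infty}$ equalises the two terms and produces precisely the exponents $p/(p+1)$ on $\|p_{X}\|_{\infty}$ and $1/(p+1)$ on $\e[|X-\widehat{X}|^{p}]$; the numerical constant $3^{q+1}$ is just the outcome of this elementary optimisation.

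The main obstacle is the measure-theoretic identification: I must pass from the pointwise notion of total variation to a genuine Borel measure $\mu$ on $\real$ (choosing a right-continuous representative of $f$) in order to justify the Fubini step that turns $\e[V(f;[X-\varepsilon,X+\varepsilon])]$ into the product $2\varepsilon\|p_{X}\|_{\infty}V(f)$. Once this ingredient is in place, the argument is an ``interpolate then optimize'' scheme that I would expect to generalise, with appropriate replacements of $|df|$ by Hardy--Littlewood maximal estimates, to the multidimensional BV, Orlicz--Sobolev, variable-exponent, and fractional Sobolev settings announced in the introduction.
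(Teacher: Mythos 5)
Your argument is correct, and it takes a genuinely different route from the proof the paper relies on. The paper does not reprove this statement: it quotes Avikainen's theorem and describes his strategy, namely the Lipschitz continuity of the distribution function of $X$, Skorokhod's explicit representation on $([0,1],\mathscr{B}([0,1]),\mathrm{Leb})$, and the decomposition of a normalized BV function as an integral of indicators $\1_{(z,\infty)}$ against a finite signed measure, so that the estimate is first proved for indicator functions. You instead split on $\{|X-\widehat{X}|\leq\varepsilon\}$ and its complement, use Markov's inequality on the far event, bound the near event by the local variation $V(f;[X-\varepsilon,X+\varepsilon])$, integrate that against the bounded density via Fubini, and optimize in $\varepsilon$; the equalizing choice gives the constant $2^{(2p+1)/(p+1)}\leq 4\leq 3^{q+1}$, so the stated inequality follows a fortiori. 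Your route is more elementary (no Skorokhod embedding, no reduction to indicators), it uses only the density of $X$, exactly matching the hypotheses (whereas the paper's multidimensional Theorem \ref{main_0} needs bounded densities for both variables, cf.\ Remark \ref{Rem_0} (ii)), and it is close in spirit to the paper's own maximal-function scheme: a two-regime split plus optimization of a threshold, with $V(f;[X-\varepsilon,X+\varepsilon])$ playing the role of the restricted maximal function; the ``nonsymmetric'' nature of your near-diagonal bound is precisely what the maximal-function approach cannot reproduce in $d\geq 2$. One caveat on the step you flagged: do not replace $f$ itself by a right-continuous representative inside the expectation, since $\widehat{X}$ is arbitrary (it need not have a density) and $f(\widehat{X})$ could change on a set of positive probability; instead let $\mu$ be the Lebesgue--Stieltjes measure of the right-continuous regularization of the variation function $x\mapsto V(f;(-\infty,x])$, for which $\mu(\real)=V(f)$ and $V(f;[a,b])\leq\mu([a,b])$ (equality may fail at atoms), and note that this one-sided inequality is all your Fubini bound requires.
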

Note that this estimate is optimal, that is, there exist some random variables $X$, $\widehat{X}$ and function $f$ such that the equality holds in \eqref{Av_0} (see, Theorem 2.4 (ii) in \cite{Av09}).
The proof of this estimate is based on the following three ideas.
The first idea is to use the Lipschitz continuity of the distribution function of $X$, which is equivalent to the existence of a bounded density function of $X$.
Note that if the distribution function of $X$ is H\"older continuous, it is still able to obtain  a generalized version of Avikainen's estimate, which  is useful to evaluate  numerical schemes for some stochastic processes (see, \cite{Ta20}).
The second idea is to use Skorokhod's ``explicit" representation to embed the distribution of $X$ in the probability space $([0,1], \mathscr{B}([0,1]), \mathrm{Leb})$ (e.g., Section 3 in \cite{Wi91}).
For multi-dimensional random variables, this representation is known as Skorokhod's embedding theorem (e.g., Theorem 2.7 in \cite{IkWa}).
However, it might be difficult to apply it to the multi-dimensional case since it is not explicit.
The third idea is that every function $f$ of (normalized) bounded variation in $\real$ can be expressed as an integral of indicator functions $\1_{(z,\infty)}$ with respect to a signed measure $\nu(\rd z)$ which has a  bounded variation.
Then the estimate \eqref{Av_0} can be obtained by first considering it for $f(x)=\1_{(z,\infty)}(x)$ (see, Lemma 3.4 in \cite{Av09} for details and Proposition 5.3 in \cite{GiXi17} for a simple proof).

In this article, we will propose some versions of Avikainen's estimate \eqref{Av_0} for multi-dimensional random variables.
To the best of our knowledge, there is no result in this direction so far.
As mentioned above, it might be difficult to apply the approach in \cite{Av09} for multi-dimensional random variables.
Instead, we propose a new approach based on the Hardy--Littlewood maximal operator $M$ for locally finite vector valued measures $\nu$, which is defined by
\begin{align*}
	M\nu(x)
	:=
	\sup_{s>0}
	\dashint_{B(x;s)}
	\rd |\nu|(z),
	~
	\dashint_{B(x;s)}
	\rd |\nu|(z)
	:=
	\frac{|\nu|(B(x;s))}{\mathrm{Leb}(B(x;s))},~
	x \in \real^{d},
\end{align*}
where $|\nu|$ is the total variation of $\nu$ and $B(x;r)$ is the closed ball in ${\mathbb R}^{d}$ with center $x$ and radius $r$.
The operator $M$ is well-studied in the fields of real analysis and harmonic analysis, and it satisfies the following Hardy--Littlewood maximal weak type estimate
\begin{align*}
	\mathrm{Leb}
	(\{x\in \real^{d}~;~M\nu(x) > \lambda\})
	\leq
	A_{1}
	|\nu|(\real^{d})
	\lambda^{-1},~\lambda>0,
\end{align*}
where the constant $A_{1}$ depends only on $d$.
Using this estimate, we will prove that for any random variables $X,\widehat{X}:\Omega \to \real^{d}$ with density functions $p_{X}$ and $p_{\widehat{X}}$ with respect to Lebesgue measure, respectively, and for any $f \in BV(\real^{d}) \cap L^{\infty}(\real^{d})$, $p \in (0, \infty)$ and $q\in [1,\infty)$, if $p_{X}$ and $p_{\widehat{X}}$ are bounded, then it holds that
\begin{align}\label{Av_1}
	\e\left[
		\left|
			f(X)
			-
			f(\widehat{X})
		\right|^{q}
	\right]
	\leq
	C
	\e\left[
		\left|
			X
			-
			\widehat{X}
		\right|^{p}
	\right]^{\frac{1}{p+1}}
\end{align}
for some constant $C$ which depends on $p,q,d,f$, $\|p_{X}\|_{\infty}$ and $\|p_{\widehat{X}}\|_{\infty}$ (for more details, see, Theorem \ref{main_0}).
Here, $BV(\real^{d})$ is the class of functions $f$ of bounded variation in $\real^{d}$, which is a subset of $L^{1}(\real^{d})$ such that the total variation $|Df|(\real^{d})=\int_{\real^{d}} |Df|$ of the Radon measure $Df$ defined by
\begin{align*}
	\int_{\real^{d}}|Df|
	:=
	\sup
	\left\{
		\int_{\real^{d}}
			f(x) \mathrm{div} g(x)
		\rd x
		~;~
		g \in C^{1}_{\mathrm{c}}(\real^{d};\real^{d})
		\text{~and~}
		\sup_{x \in \real^{d}}
		|g(x)|
		\leq 1
	\right\}
\end{align*}
is finite, where the Radon measure $Df$ is defined as the generalized derivative formulated by the integration by parts for functions of bounded variations (for more details, see, Section \ref{sec_2_1}).
The most important property of $f \in BV(\real^{d})$ which we use in this article is the following pointwise estimate (see, Lemma \ref{Lem_key_0})
%and Remark \ref{Rem_0})
\begin{align}\label{pw_esti}
	|f(x)-f(y)|
	&\leq
	K_{0}
	|x-y|
	\left\{
		M_{2|x-y|}(Df)(x)
		+
		M_{2|x-y|}(Df)(y)
	\right\},
	\text{ $\mathrm{Leb}$-a.e. }x,y \in \real^{d},
\end{align}
where for $R>0$, $M_{R}\nu$ is the restricted Hardy--Littlewood maximal function defined by
\begin{align*}
	M_{R}\nu(x)
	:=
	\sup_{0<s\leq R}
	\dashint_{B(x;s)}
	\rd |\nu|(z),~
	x \in {\mathbb R}^{d}.
\end{align*}
It is worth noting that Haj\l{}asz \cite{Ha96,Ha03} characterized Sobolev spaces $W^{1,p}(\real^{d})$, $1 \leq p <\infty$ by using a pointwise estimate similar to \eqref{pw_esti}, and defined Sobolev spaces on metric spaces using its pointwise estimate.
On the other hand, Lahti and Tuominen \cite{LaTu14}, and Tuominen \cite{Tu07} generalized this characterization to $BV(\real^{d})$ and the Orlicz--Sobolev space $W^{1,\Phi}(\real^{d})$ with a Young function $\Phi$ such that both $\Phi$ and its complementary function $\Psi$ satisfy the $\Delta_{2}$-condition (or are doubling, see Section \ref{sec_2_1}).
Moreover, the Sobolev space $W^{1,p(\cdot)}(\real^{d})$ with a variable exponent $p:\real^{d} \to [1,\infty]$ and the fractional Sobolev space $W^{s,p}(\real^{d})$ for $(s,p) \in (0,1) \times [1,\infty)$ also satisfy some pointwise estimates similar to \eqref{pw_esti}.
Inspired by these facts, we will also provide estimates similar to \eqref{Av_1} for $f \in \{W^{1,\Phi}(\real^{d}) \cup W^{1,p(\cdot)}(\real^{d}) \cup W^{s,p}(\real^{d})\} \cap L^{\infty}(\real^{d})$.
%Note that our approach to use Hardy--Littlewood maximal estimates and pointwise estimates is  also applicable to the one-dimensional case.

%Finally, we present another application of Avikanen's estimate.
%Recently several numerical schemes based on machine learning for high-dimensional forward--backward stochastic differential equations (FBSDEs) have been studied (e.g., \cite{EHaJe17,HaJeE18,HuPhWa20}).
%In particular, Hur\'e, Pham and Warin \cite{HuPhWa20} proposed numerical schemes based on a backward dynamic programming equation, and they provided an upper bound of their squared error.
%By applying the estimate \eqref{Av_1} to its upper bound, we can ensure the convergence of their numerical schemes for solutions to high-dimensional FBSDEs with irregular terminal conditions (see, Theorem \ref{main_5} for the $L^{2}$-time regularity of BSDEs and Remark \ref{Rem_BSDE}).

This article is structured as follows.
In Section \ref{sec_2}, we first recall the definitions and properties of functions of bounded variation in $\real^{d}$, Orlicz--Sobolev spaces, Sobolev spaces with variable exponents, fractional Sobolev spaces and the Hardy--Littlewood maximal function, and recall its estimates on their function spaces.
Then we will provide multi-dimensional versions of Avikainen's estimate (see, Theorem \ref{main_0} \ref{main_1}, \ref{main_2}, \ref{main_3}) for these function spaces.
In Section \ref{sec_3}, we apply our main results to numerical analysis on irregular functionals of the solution of SDEs based on the Euler--Maruyama scheme and the multilevel Monte Carlo method.
%, and to estimates of the $L^{2}$-time regularity of decoupled FBSDEs with irregular terminal conditions.

\subsection*{Notations}
We give some basic notations and definitions used throughout this article.
We consider that elements of $\real^d$ are column vectors, and for $x\in\real^d$, we write $x=(x_{1},\ldots,x_{d})^{\top}$.
Let $C^{1}_{\mathrm{c}}(U;\real^q)$ be the space of $\real^q$-valued functions on an open set $U$ of $\real^{d}$ with compact support such that the first continuous partial derivatives on $U$ exist.
For differentiable functions $f:\real^{d} \to \real^{d}$ and $g:\real^{d} \to \real$, we define the divergence of $f$ by $\mathrm{div}f:=\sum_{i=1}^{d}\frac{\partial f_{i}}{\partial x_{i}}$ and the gradient of $g$ by $\nabla g=(\frac{\partial g}{\partial x_{1}},\ldots,\frac{\partial g}{\partial x_{d}})^{\top}$.
For an essentially bounded measurable function $f:\real^{d} \to \real^{q}$, the supremum norm of $f$ is defined by $\|f\|_{\infty}:={\rm ess~sup}_{x \in \real^{d}}|f(x)|$.
For measurable functions $f:\real^{d} \to \real$ and $g:\real^{d} \to [0,\infty)$, we define
\begin{align*}
	\|f\|_{L^{r}(\real^{d}, g)}
	:=
	\left\{ \begin{array}{ll}
		\displaystyle
		\left(
			\int_{\real^{d}}
				|f(x)|^{r}
				g(x)
			\rd x
		\right)^{1/r},
		&\text{ if } r \in [1,\infty),  \\
		\displaystyle 
			\|f\|_{\infty},
		&\text{ if } r = \infty
	\end{array}\right.
\end{align*}
and the class of all functions with $\|f\|_{L^{r}(\real^{d}, g)}<\infty$ by $L^{r}(\real^{d}, g)$.
In particular, if $g\equiv 1$, then we use the notation $L^{r}(\real^{d})$ as usual $L^{r}$ space in $\real^{d}$.
For $s>0$ and $x \in \real^{d}$, we denote open and closed balls by $U(x;s):=\{y \in \real^{d}~;~|y-x| < s\}$ and $B(x;s):=\{y \in \real^{d}~;~|y-x|\leq s\}$, respectively.
For an invertible $d \times d$-matrix $A=(A_{i,j})_{1 \leq i,j\leq d}$, we set $|A|^2:=\sum_{i,j=1}^{d} A_{i,j}^2$ and
$g_A(x,y)
=\frac{\exp(-\frac{1}{2} \langle A^{-1}(y-x),y-x \rangle_{{\mathbb R}^{d}})}
{(2\pi)^{d/2} \sqrt{\det A}}$,
and $g_{c}(x,y)=g_{cI}(x,y)$ for $c>0$, where the matrix $I$ is the identity matrix.
We denote the gamma function by $\Gamma(x):=\int_{0}^{\infty} t^{x-1}e^{-t} \rd t$ for $x \in (0,\infty)$.

\section{Multi-dimensional Avikainen's estimates}\label{sec_2}

Let $(\Omega,\mathscr{F},\p)$ be a probability space and let $d$ be a positive integer.
In this section, we provide multi-dimensional versions of Avikainen's estimate for any random variables $X$ and $\widehat{X}$ with bounded density functions and for functions of bounded variation in $\real^{d}$, Orlicz--Sobolev spaces, Sobolev spaces with variable exponents and fractional Sobolev spaces.

\subsection{Function spaces}\label{sec_2_1}
In this subsection, we provide the definitions and properties of functions of bounded variation in $\real^{d}$, Orlicz--Sobolev spaces, Sobolev spaces with variable exponents and fractional Sobolev spaces.

\subsubsection*{Bounded variation in $\real^{d}$}
We first recall the definition of functions of  bounded variation in an open subset $U$ of $\real^{d}$.
For more detail, we refer to \cite{EvGa92,Gi84}.
A function $f \in L^{1}(U)$ has bounded variation in $U$, denoted by $f \in BV(U)$, if
\begin{align*}
	\int_{U}|Df|
	:=
	\sup
	\left\{
		\int_{U}
		f(x) \mathrm{div} g(x)
		\rd x
		~;~
		g \in C^{1}_{\mathrm{c}}(U;\real^{d}),~
		\sup_{x \in U}
		|g(x)|
		\leq 1
	\right\}
	<\infty.
\end{align*}
We call $\int_{U}|Df|$ the total variation of $f$ in $U$.
A function $f \in L_{\mathrm{loc}}^{1}(U)$ has locally bounded variation in $U$, denoted by $f \in BV_{\mathrm{loc}}(U)$, if $\int_{V}|Df|<\infty$ for any open set $V \subset U$ such that its closer $\overline{V}$ is compact and $\overline{V} \subset U$.

It follows from the structure theorem (e.g., Theorem 5.1 in \cite{EvGa92}) that for $f \in BV_{\mathrm{loc}}(U)$, there exists a vector valued Radon measure $Df=(D_{1}f,\ldots,D_{d}f)^{\top}$ on $(U,\mathscr{B}(U))$ such that the following integration by parts formula holds:
\begin{align*}
	\int_{U}
		f (x)\mathrm{div} g(x)
	\rd x
	=
	-
	\sum_{k=1}^{d}
	\int_{U}
		g_{k}(x)
	D_{k}f(\rd x)
	,~\text{for all}~g \in C^{1}_{\mathrm{c}}(U;\real^{d}).
\end{align*}

%We say a Borel subset $E$ of $\real^{d}$ has locally finite perimeter in $U$ if $\1_{E} \in BV_{\mathrm{loc}}(U)$, and we say $E$ is a Caccioppoli set if it has locally finite perimeter in every bounded open subset $U$ of ${\mathbb R}^{d}$.

\begin{Rem}\label{Rem_BV_0}
	\begin{itemize}
		\item[(i)]
		$BV(U)$ is a Banach space with the norm $\|f\|_{BV(U)}:=\|f\|_{L^{1}(U)}+\int_{U}|Df|$ (see, Remark 1.12 in \cite{Gi84}).
		
		\item[(ii)]
		Let $\{f_{n}\}_{n \in \n}$ be a sequence of functions in $BV(U)$ which converges to $f$ in $L^{1}_{\mathrm{loc}}(U)$.
		Then it holds that $\int_{U}|Df| \leq \liminf_{n \to \infty} \int_{U} |Df_{n}|$ (semi-continuity, e.g., Theorem 5.2 in \cite{EvGa92} or Theorem 1.9 in \cite{Gi84}).
		
		\item[(iii)]
		Sobolev's inequality holds on $BV(\real^{d})$, that is, if $f \in BV(\real^{d})$ and $d \geq 2$, then there exists $C>0$ such that $\|f\|_{L^{d/(d-1)}({\mathbb R}^{d})} \leq C\int_{\real^{d}}|Df|$, (see, Theorem 5.10 (i) in \cite{EvGa92} or Theorem 1.28 (A) in \cite{Gi84}).
		And if $d=1$, then $\|f\|_{\infty} \leq \int_{{\mathbb R}}|Df|$.
		Indeed, in the same way as the proof of Theorem 5.6 in \cite{EvGa92}, we choose $f_{k} \in C^{1}_{\mathrm{c}}(\real;\real)$, $k \in \n$ such that $f_{k} \to f$ $\mathrm{Leb}$-a.e. and $\int_{\real}|f'_{k}(z)| \rd z \to \int_{\real} |Df|$ as $k \to \infty$.
		Then by the fundamental theorem of calculus, $|f_{k}(x)| \leq |\int_{-\infty}^{x}f_{k}'(z) \rd z| \leq \int_{\real}|f_{k}'(z)| \rd z$, which implies $\|f\|_{\infty} \leq \int_{{\mathbb R}}|Df|$.
		
		%\item[(iv)]
		%In the theory of stochastic calculus, Caccioppoli sets are related to reflected Brownian motions (see, \cite{ChFiWi93,Fu97,Fu99}).
	\end{itemize}
\end{Rem}

\begin{Eg}\label{Eg_0}
	\begin{itemize}
		\item[(i)]
		Let $W^{1,1}(U)$ be a Sobolev space.
		Then $W^{1,1}(U) \subset BV(U)$ (see, e.g., Example, page 197 of \cite{EvGa92} or Example 1.2 in \cite{Gi84}).
		
		\item[(ii)]
		Let $E$ be a bounded subset of $\real^{d}$ with $C^{2}$ boundary.
		Then $\1_{E} \in BV(\real^{d}) \setminus W^{1,1}(\real^{d})$ (see, e.g. Example 1.4 in \cite{Gi84}).
		%In this case, it holds that the Gauss--Green formula
		%\begin{align*}
		%	\int_{E}
		%		\mathrm{div} g(x)
		%	\rd x
		%	=
		%	\int_{\partial E}
		%		\langle
		%			g(x),\nu_{E}(x)
		%		\rangle_{\real^{d}}
		%	H^{d-1}(\rd x),~\text{for all}~g \in C^{1}_{\mathrm{c}}(U;\real^{d})
		%\end{align*}
		%and $\int_{U}|D\1_{E}|=H^{d-1}(U \cap \partial E)$, where $\nu_{E}(x)$ is the outward unit normal vector to $\partial E$ at $x$ and $H^{d-1}$ is the $(d-1)$-dimensional Hausdorff measure (see, e.g., Example 1.4 in \cite{Gi84}).
		%Moreover, De Giorgi's structure theorem (e.g.,  Theorem 15.9 in \cite{Ma12} or Theorem 5.16 in \cite{EvGa92}) shows the generalized Gauss--Green formula for Caccioppoli sets or locally finite perimeter sets in $\real^{d}$.
		
		%\item[(iii)]
		%Let $f \in BV(\real^{d})$ and $E_{t}:=\{x \in \real^{d}~;~f(x) > t\}$.
		%Then $\1_{E_{t}} \in BV(\real^{d})$ almost every $t \in \real$, (see, e.g., Theorem 5.9 (i) in \cite{EvGa92}).
	\end{itemize}
\end{Eg}

\subsubsection*{Orlicz--Sobolev spaces}
We now recall the definition of the Orlicz--Sobolev space $W^{1,\Phi}(\real^{d})$ with a Young function $\Phi$.
For more detail, we refer to \cite{HaHa19,RaRe}.
We first recall the definitions of Young functions and N-functions.
A convex function $\Phi:[0,\infty) \to [0,\infty]$ is called a {\it Young function} if it satisfies the conditions: $\Phi(0)=0$ and $\lim_{x \to \infty}\Phi(x)=\infty$.
A Young function $\Phi$ has the following integral form
\begin{align*}
	\Phi(x)
	=
	\int_{0}^{x}
		\varphi(y)
	\rd y,~x \in [0,\infty),
\end{align*}
where $\varphi:[0,\infty) \to [0,\infty]$ is non-decreasing and left continuous such that $\varphi(0)=0$, and if $\varphi(x)=\infty$ for $x \geq a \geq 0$, then $\Phi(x)=\infty$ for $x \geq a$ (see, e.g.,  Section 1.3, Corollary 2 in \cite{RaRe}).
For a Young function $\Phi$, the {\it complementary function} $\Psi:[0,\infty) \to [0,\infty]$ of $\Phi$ and the {\it generalized inverse} $\Phi^{-1}:[0,\infty] \to [0,\infty]$ of $\Phi$ are defined by
\begin{align*}
	\Psi(x)
	:=
	\sup_{y \geq 0}\{yx-\Phi(y)\}
	=
	\int_{0}^{x}
		\varphi^{-1}(y)
	\rd y
	\quad\text{and}\quad
	\Phi^{-1}(x)
	:=
	\inf\{y \geq 0~;~\Phi(y)>x\},
\end{align*}
where $\varphi^{-1}(x):=\inf\{y \geq 0~;~\varphi(y)>x\}$, $x \geq 0$.
A Young function $\Phi$ is called an {\it N-function} if it satisfies the following conditions: (N-i) $\Phi$ is continuous; (N-ii) $\Phi(x)=0$ if and only if $x=0$; (N-iii) $\lim_{x \to 0}\Phi(x)/x=0$ and $\lim_{x \to \infty}\Phi(x)/x=\infty$.
Here, continuity in the topology of $C([0,\infty); [0,\infty])$ means that $\lim_{y \to x}\Phi(y)=\Phi(x)$ for every point $x \in [0,\infty)$ regardless of whether $\Phi(x)$ is finite or infinite (e.g., page 14 on \cite{HaHa19}).
For a Young function $\Phi$, the Orlicz space $L^{\Phi}(\real^{d})$ is defined by
\begin{align*}
	L^{\Phi}(\real^{d})
	&:=
	\bigcup_{\alpha >0}
	\left\{
		f:\real^{d} \to \real
		~;~
		f\text{ : measurable and }
		\int_{\real^{d}}
			\Phi(\alpha |f(x)|)
		\rd x
		<\infty
	\right\}.
\end{align*}
If $\mathrm{Leb}$-a.e. equal functions are identified, then this function space is a Banach space with the Luxemburg norm
\begin{align*}
	\|f\|_{L^{\Phi}(\real^{d})}
	:=
	\inf
	\left\{
		\lambda>0~;~
		\int_{\real^{d}}
			\Phi
			\left(
				\frac{|f(x)|}{\lambda}
			\right)
			\rd x
			\leq 1
	\right\}
\end{align*}
(e.g., Section 3.3, Theorem 10 in \cite{RaRe}), and if $\Psi$ is the complementary function of $\Phi$, then the generalized H\"older's inequality
\begin{align}\label{eq_GHolder}
	\int_{\real^{d}}
		|f(x)g(x)|
	\rd x
	\leq
	2
	\|f\|_{L^{\Phi}(\real^{d})}
	\|g\|_{L^{\Psi}(\real^{d})}
\end{align}
holds for any $f \in L^{\Phi}(\real^{d})$ and $g \in L^{\Psi}(\real^{d})$ (e.g., Section 3.3, Proposition 1 in \cite{RaRe}).

A Young function $\Phi$ satisfies the $\Delta_{2}$-condition (or is {\it doubling}) if there exists $C>0$ such that for each $x>0$, $\Phi(2x) \leq C \Phi(x)$.
More specifically, various characterizations (equivalent conditions) of the $\Delta_{2}$-condition are described in Section 2.3 of \cite{RaRe}.

For a Young function $\Phi$, the Orlicz--Sobolev space $W^{1,\Phi}(\real^{d})$ is defined by
\begin{align*}
	W^{1,\Phi}(\real^{d})
	:=
	\left\{
		f \in L^{\Phi}(\real^{d})
		~;~
		|Df| \in L^{\Phi}(\real^{d})
	\right\},
\end{align*}
where $Df:=(D_{1}f,\ldots,D_{d}f)^{\top}$ is the vector of the first order weak partial derivatives $D_{i}f$ of $f$ for $i=1,\ldots,d$ (e.g., Section 9.3, Definition 1 in \cite{RaRe}).

\begin{Rem}\label{Rem_Orlicz_1}
\begin{itemize}
\item[(i)] The complementary function $\Psi$ to a Young function $\Phi$ is also a Young function (e.g., page 10 on \cite{RaRe}, or page 14 on \cite{HaHa19} and Lemma 2.4.2 in \cite{HaHa19}).

\item[(ii)]
The complementary function $\Psi$ to an N-function $\Phi$ satisfies (N-ii) and (N-iii).
Indeed, if $\Psi(x)=0$, then it holds that $x \leq \Phi(y)/y$ for $y>0$.
Thus we obtain $x=0$ by $\lim_{y \to 0}\Phi(y)/y=0$.
On the other hand, since $\varphi$ is non-decreasing, it holds that $\Phi(x)/x \leq \varphi(x) \leq \Phi(2x)/x$ for $x>0$.
Hence $\Phi$ satisfies $\lim_{x \to 0}\Phi(x)/x=0$ and $\lim_{x \to \infty}\Phi(x)/x=\infty$ if and only if $\varphi$ satisfies $\lim_{x \to 0}\varphi(x)=0$ and $\lim_{x \to \infty}\varphi(x)=\infty$.
This implies that $\lim_{x \to 0}\Psi(x)/x=0$ and  $\lim_{x \to \infty}\Psi(x)/x=\infty$.
Furthermore, $\Psi$ is left continuous (see, e.g., page 14 on \cite{HaHa19} and Lemma 2.4.2 in \cite{HaHa19}).
In particular, if $\Psi$ is real-valued, then it is an N-function.

\item[(iii)] In general, the complementary function $\Psi$ to a Young function $\Phi$ does not always satisfy the $\Delta_{2}$-condition even if $\Phi$ satisfies the $\Delta_{2}$-condition (e.g., $\Phi(x)=\int_{0}^{x}\log(1+y){\rm d}y=(1+x)\log(1+x)-x$, $x \in [0,\infty)$).

\item[(iv)]
Let $\Phi$ be a Young function.
Then the following inclusion relations hold $W^{1,\Phi}({\mathbb R}^{d}) \subset W_{{\rm loc}}^{1,1}({\mathbb R}^{d}) \subset BV_{{\rm loc}}({\mathbb R}^{d})$.
Indeed, the first relation is shown as follows:
Let $f \in W^{1,\Phi}({\mathbb R}^{d})$ and $V$ be a compact subset of ${\mathbb R}^{d}$.
By the definition of $L^{\Phi}({\mathbb R}^{d})$, there exists $\alpha>0$ such that $\int_{{\mathbb R}^{d}}\Phi(\alpha|Df(x)|){\rm d}x<\infty$.
Note that if $\Phi^{-1}(x)=\infty$, then for any $y \geq 0$, $\Phi(y) \leq x$, and thus by taking $y \to \infty$, we conclude $x=\infty$.
Therefore, we have $\Phi^{-1}(\dashint_{V}\Phi(\alpha|Df(x)|){\rm d}x)<\infty$.
By using Jensen's inequality, we obtain
\begin{align*}
	\int_{V}|Df(x)|{\rm d}x
	&=
	\frac{{\rm Leb}(V)}{\alpha}
	\dashint_{V}
		\alpha|Df(x)|
	{\rm d}x
	\leq
	\frac{{\rm Leb}(V)}{\alpha}
	\Phi^{-1}
	\left(
		\Phi
		\left(
			\dashint_{V}\alpha|Df(x)|{\rm d}x
		\right)
	\right) \\
	&\leq
	\frac{{\rm Leb}(V)}{\alpha}
	\Phi^{-1}
	\left(
		\dashint_{V}\Phi(\alpha|Df(x)|){\rm d}x
	\right)
	<\infty,
\end{align*}
which implies $f \in W^{1,1}_{\mathrm{loc}}(\real^{d})$.

\item[(v)]
If a Young function $\Phi$ satisfies the $\Delta_{2}$-condition, then the Orlicz space $L^{\Phi}({\mathbb R}^{d})$ coincides with the set of all functions $f$ which satisfy $\int_{{\mathbb R}^{d}}\Phi(|f(x)|){\rm d}x<\infty$.

\end{itemize}
\end{Rem}

\begin{Eg}\label{Ex_Orlicz_0}
	\begin{itemize}
		\item[(i)]
		Let $p \in (1,\infty)$ and $\Phi(x):=x^{p}/p$, $x \in [0,\infty)$.
		Then $\Phi$ is an N-functions and satisfies the $\Delta_{2}$-condition.
		Moreover, the Orlicz space $L^{\Phi}({\mathbb R}^{d})$ coincides with the classical Lebesgue space $L^{p}({\mathbb R}^{d})$.
		
		\item[(ii)]
		Let $p>1$ and $\alpha>0$, or $p>1-\alpha$ and $-1\leq \alpha<0$.
		Then the function $\Phi(x):=x^{p} (\log(e+x))^{\alpha}$, $x \in [0,\infty)$ is an N-function.
		Moreover, $\Phi$ and its complementary function $\Psi$ satisfy the $\Delta_{2}$-condition.
		Note that the Orlicz--Sobolev spaces with such $\Phi$ are used and studied in \cite{AdHu03,IwKoOn01}.
		We only check that $\Phi$ and $\Psi$ satisfy the $\Delta_{2}$-condition.
		Let $x>0$.
		Since $\log(e+x) \leq \log(e+2x) \leq 2\log(e+x)$, we obtain $\Phi(2x) \leq 2^{p}\max\{1,2^{\alpha}\}\Phi(x)$.
		On the other hand, if $p>1$ and $\alpha>0$, we obtain for any $y \geq 0$,
		\begin{align*}
			y(2x)-\Phi(y)%y^{p}(\log(e+y))^{\alpha}
			&\leq
			2yx
			-
			y^{p}
			\left\{
				\log
				\left(
					e+\frac{y}{2^{\frac{1}{p-1}}}
				\right)
			\right\}^{\alpha}
			=
			2^{\frac{p}{p-1}}
			\left\{
				\frac{y}{2^{\frac{1}{p-1}}}x
				-
				\Phi
				\left(
					\frac{y}{2^{\frac{1}{p-1}}}
				\right)
			\right\}.
		\end{align*}
		Thus $\Psi(2x) \leq 2^{\frac{p}{p-1}}\Psi(x)$.
		If $p>1-\alpha$ and $-1\leq \alpha<0$, since the function $y \mapsto \log(e+y)$ is concave, we obtain for any $y \geq 0$,
		\begin{align*}
			y(2x)
			-
			\Phi(y)
			%&=
			%2yx
			%-
			%2^{\frac{\alpha}{\alpha+p-1}}
			%y^{p}
			%\left(
			%	\frac{\log(e+y)}{2^{\frac{1}{\alpha+p-1}}}
			%\right)^{\alpha} \\
			&\leq
			2yx
			-
			2^{\frac{\alpha}{\alpha+p-1}}
			y^{p}
			\left\{
				\log\left(
					e+\frac{y}{2^{\frac{1}{\alpha+p-1}}}
				\right)
			\right\}^{\alpha}
			=
			2^{\frac{\alpha+p}{\alpha+p-1}}
			\left\{
				\frac{y}{2^{\frac{1}{\alpha+p-1}}}x
				-
				\Phi\left(
					\frac{y}{2^{\frac{1}{\alpha+p-1}}}
				\right)
			\right\}.
		\end{align*}
		Thus $\Psi(2x) \leq 2^{\frac{\alpha+p}{\alpha+p-1}}\Psi(x)$.
		
		\item[(iii)]
		Let $\Phi$ be a Young function which satisfies the condition (N-iii) and let $f \in L^{1}(\real^{d})$ be a continuous and strictly positive function with $\lim_{|x| \to \infty}f(x)=0$, then $f \in L^{\Phi}(\real^{d})$.
		Indeed, for any $\alpha>0$, there exists $K>0$ such that $C_{\alpha}:=\sup_{|x|>K}\frac{\Phi(\alpha f(x))}{\alpha f(x)}<\infty$.
		Thus since $\Phi$ is non-decreasing and $f$ is bounded on $B(0;K)$, we obtain $\int_{{\mathbb R}^{d}}\Phi(\alpha|f(x)|){\rm d}x \leq {\rm Leb}(B(0;K))\Phi(\alpha\sup_{x \in B(0;K)}|f(x)|)+\alpha C_{\alpha}\|f\|_{L^{1}({\mathbb R}^{d})}<\infty$.
	\end{itemize}
\end{Eg}

\subsubsection*{Sobolev spaces with variable exponents}
We next recall the definition of the Sobolev space $W^{1,p(\cdot)}({\mathbb R}^{d})$ with a variable exponent $p$.
This function space is defined as one of generalized Orlicz spaces (also known as Musielak--Orlicz spaces) by the modular $f \mapsto \int_{{\mathbb R}^{d}}|f(x)|^{p(x)}{\rm d}x$.
For more detail, we refer to \cite{DiHaHaRu11}.

A measurable function $p:\real^{d} \to [1,\infty]$ is called a variable exponent on ${\mathbb R}^{d}$, and denoted by $p \in {\mathcal P}({\mathbb R}^{d})$.
We define
\begin{align*}
	p^{-}
	:=
	\underset{x \in \real^{d}}{{\rm ess~inf~}}
	p(x)
	\quad \text{ and } \quad
	p^{+}
	:=
	\underset{x \in \real^{d}}{{\rm ess~sup~}}
	p(x).
\end{align*}
The Lebesgue space $L^{p(\cdot)}({\mathbb R}^{d})$ with a variable exponent $p \in  {\mathcal P}({\mathbb R}^{d})$ is defined by
\begin{align*}
	L^{p(\cdot)}({\mathbb R}^{d})
	&:=\bigcup_{\alpha>0}
	\left\{
		f:\real^{d} \to \real
		~;~
		f\text{ : measurable and }
		\int_{{\mathbb R}^{d}}
			\left|
				\alpha
				f(x)
			\right|^{p(x)}
		{\rm d}x
		<\infty
	\right\}
\end{align*}
If $\mathrm{Leb}$-a.e. equal functions are identified, then this function space is a Banach space with respect to the norm
\begin{align*}
	\|f\|_{L^{p(\cdot)}({\mathbb R}^{d})}
	:=
	\inf
	\left\{
		\lambda>0
		~;~
		\int_{{\mathbb R}^{d}}
			\left|
				\frac{f(x)}{\lambda}
			\right|^{p(x)}
		{\rm d}x
		\leq 1
	\right\}
\end{align*}
(e.g., Theorem 3.2.7 in \cite{DiHaHaRu11}).
Let $p,q,s \in {\mathcal P}({\mathbb R}^{d})$ and assume that
\begin{align*}
	\frac{1}{s(x)}=\frac{1}{p(x)}+\frac{1}{q(x)},~
	\text{$\mathrm{Leb}$-a.e}.~x \in \real^{d}.
\end{align*}
Then for any $f \in L^{p(\cdot)}({\mathbb R}^{d})$ and $g \in L^{q(\cdot)}({\mathbb R}^{d})$, the generalized H\"older's inequality
\begin{align}\label{eq_GHolder_1}
	\|fg\|_{L^{s(\cdot)}({\mathbb R}^{d})}
	\leq
	2
	\|f\|_{L^{p(\cdot)}({\mathbb R}^{d})}
	\|g\|_{L^{q(\cdot)}({\mathbb R}^{d})}
\end{align}
holds (see, Lemma 3.2.20 in \cite{DiHaHaRu11}).
In the case $s=p=q=\infty$, we use the convention $1/\infty=0$.

We say a function $f: {\mathbb R}^{d} \to {\mathbb R}$ is locally log-H\"older continuous on ${\mathbb R}^{d}$ if there exists $C>0$ such that for any $x, y \in {\mathbb R}^{d}$,
\begin{align*}
	\left|
		f(x)
		-
		f(y)
	\right|
	\leq
	\frac{C}{\log(e+1/|x-y|)}.
\end{align*}
More specifically, various characterizations (equivalent conditions) of the locally log-H\"older continuity are described in Lemma 4.1.6 of \cite{DiHaHaRu11}.
We say that $f: {\mathbb R}^{d} \to {\mathbb R}$ satisfies the log-H\"older decay condition if there exist $f_{\infty} \in {\mathbb R}$ and $C>0$ such that for any $x \in {\mathbb R}^{d}$,
\begin{align}
\label{eq:7_1}
	\left|
		f(x)
		-
		f_{\infty}
	\right|
	\leq
	\frac{C}{\log(e+|x|)}.
\end{align}
We say that $f: {\mathbb R}^{d} \to {\mathbb R}$ is globally log-H\"older continuous on ${\mathbb R}^{d}$ if it is locally log-H\"older continuous on ${\mathbb R}^{d}$ and satisfies the log-H\"older decay condition.
If $f: {\mathbb R}^{d} \to {\mathbb R}$ is globally log-H\"older continuous on ${\mathbb R}^{d}$, then the constant $f_{\infty}$ in \eqref{eq:7_1} is unique and $f$ is bounded (e.g., page 100 on \cite{DiHaHaRu11}).
We define
\begin{align*}
	{\mathcal P}^{\log}({\mathbb R}^{d})
	:=
	\left\{
		p \in {\mathcal P}({\mathbb R}^{d})
		~;~
		1/p \text{ is globally log-H\"older continuous}
	\right\}
\end{align*}
and define $p_{\infty}$ by $1/p_{\infty}:=\lim_{|x| \to \infty}1/p(x)$.
As usual we use the convention $1/\infty:=0$.

The Sobolev space $W^{1,p(\cdot)}({\mathbb R}^{d})$ with a variable exponent $p \in {\mathcal P}({\mathbb R}^{d})$ is defined by
\begin{align*}
	W^{1,p(\cdot)}({\mathbb R}^{d})
	:=
	\left\{
		f \in L^{p(\cdot)}({\mathbb R}^{d})
		~;~
		|Df|
		\in
		L^{p(\cdot)}({\mathbb R}^{d})
	\right\},
\end{align*}
where $Df:=(D_{1}f,\ldots,D_{d}f)^{\top}$ is the vector of the first order weak partial derivatives $D_{i}f$ of $f$ for $i=1,\ldots,d$ (see, e.g., Definition 8.1.2 in \cite{DiHaHaRu11}).

\begin{Rem}\label{Rem_Sob_expo}
\begin{itemize}
	\item[(i)]
	For $p \in {\mathcal P}^{\log}({\mathbb R}^{d})$, although $1/p$ is bounded, $p$ is not always bounded (e.g., page 101 on \cite{DiHaHaRu11}).
	
	\item[(ii)]
	$p \in {\mathcal P}^{\log}({\mathbb R}^{d})$ if and only if $p^{*}:=p/(p-1) \in {\mathcal P}^{\log}({\mathbb R}^{d})$, and then $(p_{\infty})^{*}=(p^{*})_{\infty}$ (e.g., page 101 on \cite{DiHaHaRu11}).
	
	\item[(iii)]
	For $p \in {\mathcal P}({\mathbb R}^{d})$ with $p^{+}<\infty$, $p \in {\mathcal P}^{\log}({\mathbb R}^{d})$ if and only if $p$ is globally log-H\"older continuous.
	This is due to the fact that $p \mapsto 1/p$ is a bilipschitz mapping from $[p^{-},p^{+}]$ to $[1/p^{+},1/p^{-}]$ (e.g., Remark 4.1.5 in \cite{DiHaHaRu11}).

	\item[(iv)]
	Note that the following inclusion relations hold $W^{1,p(\cdot)}({\mathbb R}^{d}) \subset W_{{\rm loc}}^{1,p^{-}}({\mathbb R}^{d}) \subset W_{{\rm loc}}^{1,1}({\mathbb R}^{d})$.
	Indeed, the first relation is shown by using the generalized H\"older's inequality \eqref{eq_GHolder_1}.
\end{itemize}
\end{Rem}

\begin{Eg}
Let $d=1$ and
\begin{align*}
	p(x)
	:=
	\max
	\left\{
		1-e^{3-|x|},
		\min
		\left\{
			\frac{6}{5},
			\max
			\left\{
				\frac{1}{2},
				\frac{3}{2}-x^{2}
			\right\}
		\right\}
	\right\}
	+1,
	\quad
	x \in {\mathbb R}.
\end{align*}
Then $p \in {\mathcal P}^{\log}({\mathbb R})$ and $1<p^{-}<p^{+}<\infty$ (e.g., Example 1.3 in \cite{NaSa12}, or Example 9.1.15 and Example 9.1.16 in \cite{YaLiKy17}).
\end{Eg}

\subsubsection*{Fractional Sobolev spaces}
We finally recall the definition of the fractional Sobolev space $W^{s,p}({\mathbb R}^{d})$.
For more detail, we refer to \cite{DiPaVa12}.

Let $s \in (0,1)$ be a fractional exponent and $p \in [1,\infty)$.
The fractional Sobolev space $W^{s,p}({\mathbb R}^{d})$ is defined by
\begin{align*}
	W^{s,p}({\mathbb R}^{d})
	:=
	\left\{
		f \in L^{p}({\mathbb R}^{d})
		~;~
		\int_{{\mathbb R}^{d}}
			\int_{{\mathbb R}^{d}}
				\left(
					\frac{|f(x)-f(y)|}{|x-y|^{d/p+s}}
				\right)^{p}
			\rd x
		\rd y
		<\infty
	\right\}.
\end{align*}
In the literature, the fractional Sobolev space is also called the Aronszajn, Gagliardo or Slobodeckij space.

For $s \in (0,1)$ and $p \in [1,\infty)$ and $f \in W^{s,p}(\real^{d})$, we define the operator $G_{s,p}$ by
\begin{align*}
	G_{s,p}f(x)
	:=
	\left(
		\int_{{\mathbb R}^{d}}
		\left(
			\frac{|f(x)-f(y)|}{|x-y|^{d/p+s}}
		\right)^{p}
		{\rm d}y
	\right)^{1/p},~
	x \in {\mathbb R}^{d}.
\end{align*}
Then $G_{s,p}f \in L^{p}(\real^{d})$.

\subsection{Hardy--Littlewood maximal function and estimates}\label{sec_2_2}
In this subsection, we recall the definition of the Hardy--Littlewood maximal function and their estimates on function spaces defined in Section \ref{sec_2_1}, which are well-known in the fields of real analysis and harmonic analysis.

Let $\nu$ be a locally finite vector valued measure on $\real^{d}$.
The Hardy--Littlewood maximal operator $M$ for $\nu$ is defined by
\begin{align*}
	M\nu(x)
	:=
	\sup_{s>0}
	\dashint_{B(x;s)}
	\rd |\nu|(z),
	\quad
	\dashint_{B(x;s)}
	\rd |\nu|(z)
	:=
	\frac{|\nu|(B(x;s))}{\mathrm{Leb}(B(x;s))},
\end{align*}
where $|\nu|$ is the total variation of $\nu$.
For $R>0$,  we define the restricted Hardy--Littlewood maximal function $M_{R}\nu$ by
\begin{align*}
	M_{R}\nu(x)
	:=
	\sup_{0<s\leq R}
	\dashint_{B(x;s)}
	\rd |\nu|(z),~
	x \in {\mathbb R}^{d}.
\end{align*}
If $\nu(\rd x)=f(x) \rd x$, then we write $Mf(x)$ and $M_{R}f(x)$, respectively.

The following lemma is well-known as the Hardy--Littlewood maximal  weak and strong type estimates.

\begin{Lem}\label{Lem_key_2}
	\begin{itemize}
	\item[(i)]
	Weak type estimate (e.g., Chapter III, Section 4.1, (a) in \cite{St70}).
	There exists $A_{1}>0$ such that for any finite and vector valued signed measure measure $\nu$ on $\real^{d}$ and $\lambda>0$,
	\begin{align*}
		\mathrm{Leb}
		\left(\left\{x\in \real^{d}~;~M\nu(x) > \lambda\right\}\right)
		\leq
		A_{1}
		|\nu|(\real^{d})
		\lambda^{-1}.
	\end{align*}
	\item[(ii)]
	Strong type estimate (e.g., Chapter I, Section 1.3, Theorem 1 (c) in \cite{St70}).
	For any $p \in (1,\infty]$, there exists $A_{p}>0$ such that for any $f \in L^{p}(\real^{d})$,
	\begin{align*}
		\|Mf\|_{L^{p}(\real^{d})}
		\leq
		A_{p} \|f\|_{L^{p}(\real^{d})}.
	\end{align*}
	\end{itemize}
\end{Lem}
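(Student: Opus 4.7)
Both estimates are classical, and I would follow the standard harmonic-analytic route: prove (i) by a Vitali covering argument, then deduce (ii) from (i) via Marcinkiewicz interpolation against the trivial $L^\infty$ bound.

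For (i), the plan is as follows. Set $E_\lambda := \{x \in \real^d : M\nu(x) > \lambda\}$. For each $x \in E_\lambda$, pick by definition of the supremum some $r_x > 0$ with $|\nu|(B(x;r_x)) > \lambda\,\mathrm{Leb}(B(x;r_x))$. By inner regularity of Lebesgue measure it suffices to bound $\mathrm{Leb}(K)$ for an arbitrary compact $K \subset E_\lambda$; cover $K$ by finitely many such balls $B(x_i;r_{x_i})$. Apply the Vitali covering lemma to extract a pairwise disjoint subfamily $\{B(x_{i_k};r_{x_{i_k}})\}_k$ such that $K \subset \bigcup_k B(x_{i_k};5 r_{x_{i_k}})$. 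Then
\[
\mathrm{Leb}(K) \;\leq\; 5^d \sum_k \mathrm{Leb}(B(x_{i_k};r_{x_{i_k}})) \;\leq\; \frac{5^d}{\lambda}\sum_k |\nu|(B(x_{i_k};r_{x_{i_k}})) \;\leq\; \frac{5^d}{\lambda}\,|\nu|(\real^d),
\]
using disjointness in the last step. Letting $K$ exhaust $E_\lambda$ gives the weak-type bound with $A_1 = 5^d$.

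For (ii), the case $p = \infty$ is immediate since every ball-average of $|f|$ is bounded by $\|f\|_\infty$. For $p \in (1,\infty)$, I would interpolate. Fix $\lambda > 0$ and decompose $f = f_1 + f_2$ with $f_1 := f\,\1_{\{|f| > \lambda/2\}}$ and $f_2 := f - f_1$, so $\|f_2\|_\infty \leq \lambda/2$ and hence $Mf \leq Mf_1 + \lambda/2$, giving $\{Mf > \lambda\} \subset \{Mf_1 > \lambda/2\}$. Applying (i) to the finite measure $f_1(x)\,\rd x$ yields
\[
\mathrm{Leb}(\{Mf > \lambda\}) \;\leq\; \frac{2 A_1}{\lambda}\int_{\{|f| > \lambda/2\}}|f(x)|\,\rd x.
\]
Plug this into the layer-cake identity $\|Mf\|_{L^p(\real^d)}^p = p\int_0^\infty \lambda^{p-1}\mathrm{Leb}(\{Mf > \lambda\})\,\rd\lambda$ and swap the order of integration via Fubini: the inner $\lambda$-integral runs over $0 < \lambda < 2|f(x)|$ and yields a factor $(2|f(x)|)^{p-1}/(p-1)$. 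One ends up with $\|Mf\|_{L^p(\real^d)}^p \leq \frac{2^p p A_1}{p-1}\|f\|_{L^p(\real^d)}^p$, so one can take $A_p = (2^p p A_1/(p-1))^{1/p}$.

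The one non-routine ingredient is the Vitali covering lemma, whose proof is a greedy selection argument (sort the finite collection by decreasing radius, iteratively pick a ball disjoint from previously chosen ones; the factor $5$ arises because any discarded ball meets some selected ball of at least as large radius and is therefore contained in its $5$-fold dilate). I expect this to be the only non-mechanical step; once it is in hand, both (i) and the interpolation in (ii) are essentially bookkeeping. Note also that the blow-up of $A_p$ like $1/(p-1)^{1/p}$ as $p \downarrow 1$ is sharp, reflecting the failure of the strong $(1,1)$ estimate.
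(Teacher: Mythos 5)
Your proof is correct and is exactly the classical argument the paper relies on: the paper does not prove this lemma itself but cites \cite{St70}, and its following remark indicates precisely your route for (i) (Vitali covering, with $A_{1}=5^{d}$), while your derivation of (ii) from (i) via the truncation $f=f\1_{\{|f|>\lambda/2\}}+f\1_{\{|f|\leq\lambda/2\}}$ and the layer-cake formula is the standard proof in the cited reference. The only cosmetic remark is that for a finite family the greedy selection already gives dilation factor $3$ (so $3^{d}$ would do), but your factor $5^{d}$ is of course still a valid bound and matches the constant quoted in the paper.
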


\begin{Rem}
	\begin{itemize}
		\item[(i)]
		The estimate in Lemma \ref{Lem_key_2} (i) can be shown by the same way as the proof of Theorem 1 (b) in Chapter I, Section 1.3 of \cite{St70} as an application of Vitali's covering lemma (e.g., Chapter I, Section 1.6, Lemma in \cite{St70}), and the constant $A_{1}$ can be chosen as $A_{1}=5^{d}$.
		
		\item[(ii)]
		The Hardy--Littlewood maximal operator is used to prove the flow property of ordinary differential equations (ODEs) and stochastic differential equations (SDEs) with Sobolev coefficients.
		In particular, by using this maximal operator, Crippa and De Lellis \cite{CrLe08} proved the existence of a unique regular Lagrangian flow for ODEs with a local Sobolev coefficient, and Zhang \cite{Zh11} (also see, \cite{Zh13}) studied the stochastic homeomorphism flows property for SDEs with local Sobolev coefficients.
	\end{itemize}
\end{Rem}

The following lemma shows that the $\Delta_{2}$-condition is equivalent to the Hardy--Littlewood maximal strong type estimate on the Orlicz space.

\begin{Lem}[Theorem 2.1 in \cite{Ga88}]\label{Lem_key_3}
	Let $\Phi$ be an N-function and $\Psi$ be its complementary function.
	Then $\Psi$ satisfies the $\Delta_{2}$-condition if and only if there exists $A_{\Phi}>0$ such that for any $f \in L^{\Phi}(\real^{d})$,
	\begin{align*}
		\|Mf\|_{L^{\Phi}({\mathbb R}^{d})}
		\leq
		A_{\Phi}
		\|f\|_{L^{\Phi}({\mathbb R}^{d})}.
	\end{align*}
\end{Lem}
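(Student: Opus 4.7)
The plan is to establish the two implications separately. For sufficiency ($\Psi \in \Delta_{2} \Rightarrow$ strong type estimate on $L^{\Phi}$) I would run an Orlicz analogue of the Marcinkiewicz interpolation argument, combining the weak $(1,1)$ estimate of Lemma \ref{Lem_key_2}\,(i) with the trivial $L^{\infty}$ bound $\|Mf\|_{\infty} \leq \|f\|_{\infty}$. For necessity (strong type estimate $\Rightarrow \Psi \in \Delta_{2}$) I would test the assumed inequality on characteristic functions of balls, or argue by contradiction.

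The key analytic input for sufficiency is the equivalence of $\Psi \in \Delta_{2}$ with $\Phi \in \nabla_{2}$, which in turn is equivalent to the existence of $p > 1$ such that $\Phi(sx) \geq s^{p}\Phi(x)$ for all $s \geq 1$ and $x \geq 0$. Given $f \in L^{\Phi}(\real^{d})$, decompose for each $\lambda > 0$ as $f = f\,\1_{\{|f|>\lambda/2\}}+f\,\1_{\{|f|\leq \lambda/2\}}$; the second summand is pointwise bounded by $\lambda/2$, so $\{Mf > \lambda\} \subset \{M(f\,\1_{\{|f|>\lambda/2\}}) > \lambda/2\}$, and Lemma \ref{Lem_key_2}\,(i) yields
\begin{align*}
	\mathrm{Leb}\left(\{Mf > \lambda\}\right)
	\leq
	\frac{2 A_{1}}{\lambda}
	\int_{\{|f|>\lambda/2\}}
		|f(x)|
	\rd x.
\end{align*}
The layer cake identity $\int_{\real^{d}} \Phi(Mf)\,\rd x = \int_{0}^{\infty} \Phi'(\lambda)\,\mathrm{Leb}(\{Mf>\lambda\})\,\rd \lambda$ combined with Fubini then gives
\begin{align*}
	\int_{\real^{d}}
		\Phi(Mf(x))
	\rd x
	\leq
	2 A_{1}
	\int_{\real^{d}}
		|f(x)|
		\int_{0}^{2|f(x)|}
			\frac{\Phi'(\lambda)}{\lambda}
		\rd \lambda
	\rd x.
\end{align*}
An integration by parts together with the lower-type bound $\Phi(\lambda) \leq (\lambda/y)^{p}\Phi(y)$ for $\lambda \leq y$ bounds the inner integral by $C_{p}\Phi(2|f(x)|)/|f(x)|$, yielding $\int \Phi(Mf)\,\rd x \leq C\int \Phi(2|f|)\,\rd x$ with $C$ depending only on $p$ and $d$. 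A rescaling $f \mapsto f/(K\|f\|_{L^{\Phi}})$ with $K$ large, together with the convexity inequality $\Phi(tx) \leq t \Phi(x)$ for $t \in [0,1]$, absorbs the factor of $2$ in $\Phi(2|f|)$ and produces $\|Mf\|_{L^{\Phi}} \leq A_{\Phi} \|f\|_{L^{\Phi}}$ with $A_\Phi$ of the form $2C$.

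For the necessity direction, I would test the assumed bound on $f_{R} = \1_{B(0,R)}$: one has $\|f_{R}\|_{L^{\Phi}} = 1/\Phi^{-1}(1/\mathrm{Leb}(B(0,R)))$, while $Mf_{R}(x) \geq (R/(R+|x|))^{d}$ gives a lower bound on $\|Mf_{R}\|_{L^{\Phi}}$ via integration of $\Phi$ of the right hand side. Comparing the resulting inequality at two different radii $R$ and $R/2$ produces a doubling-type inequality for $\Phi^{-1}$, equivalent to $\Phi \in \nabla_{2}$ and hence to $\Psi \in \Delta_{2}$. Alternatively, one can argue by contrapositive: assuming $\Psi \notin \Delta_{2}$, choose $x_{n} \to \infty$ with $\Psi(2x_{n})/\Psi(x_{n}) \to \infty$ and assemble widely separated scaled bumps into a function $f \in L^{\Phi}$ for which $Mf \notin L^{\Phi}$, contradicting the assumed boundedness.

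The main obstacle is the necessity half: extracting a quantitative $\Delta_{2}$ inequality from a single norm inequality is delicate because the Luxemburg norm is defined implicitly, and the choice of test functions (or the construction of the counterexample in the contrapositive) must track the critical values $x_{n}$ and their $\Psi$-values carefully. The sufficiency half, by contrast, is essentially a layer cake computation; its only nontrivial ingredient is the classical equivalence between $\Psi \in \Delta_{2}$ and the lower-type condition on $\Phi$, which is proven directly by iterating the $\nabla_{2}$ doubling inequality.
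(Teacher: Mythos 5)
The paper does not actually prove this lemma: it is quoted verbatim from Gallardo (Theorem 2.1 in \cite{Ga88}), so your proposal has to be measured against the standard proof. Your sufficiency half is essentially that standard argument and is correct: truncate $f$ at height $\lambda/2$, apply the weak $(1,1)$ bound of Lemma \ref{Lem_key_2}\,(i), layer-cake, and use the lower-type exponent $p>1$ coming from $\Psi\in\Delta_{2}$ (equivalently $\Phi\in\nabla_{2}$) to control $\int_{0}^{2t}\varphi(\lambda)\lambda^{-1}\rd\lambda$ by $C\Phi(Ct)/t$; the final rescaling to pass from the modular inequality to the Luxemburg-norm inequality is also handled correctly.

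The necessity half, however, has a genuine gap as stated. Testing on $f_{R}={\bf 1}_{B(0,R)}$ and ``comparing the resulting inequality at two radii $R$ and $R/2$'' cannot produce $\nabla_{2}$: since $Mf\geq|f|$ a.e.\ forces $A_{\Phi}\geq1$, any inequality you extract from the pointwise bound $Mf_{R}(x)\geq(R/(R+|x|))^{d}$ on a single annulus (or from a comparison of finitely many radii) has the form $(N^{d}-1)\Phi(v)\leq\Phi\bigl(A_{\Phi}(2N)^{d}v\bigr)$, in which the gain factor never exceeds the dilation factor; such inequalities already follow from convexity of $\Phi$ and carry no information about $\Psi$. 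What does work — and is the content of the known proof — is to keep the \emph{entire} tail integral: from $\int_{\real^{d}}\Phi\bigl(Mf_{R}(x)/(A_{\Phi}\|f_{R}\|_{L^{\Phi}})\bigr)\rd x\leq1$, restricting to $|x|\geq R$, using $Mf_{R}(x)\geq(R/(2|x|))^{d}$ and polar coordinates, one obtains $\int_{0}^{t}\Phi(s)s^{-2}\rd s\leq\Phi(Kt)/t$ for all $t>0$ with $K=2^{d}A_{\Phi}$; then, since $h(s):=\Phi(s)/s$ is non-decreasing, restricting this integral to the window $[te^{-2K},t]$ of logarithmic length $2K$ gives $2K\,h(te^{-2K})\leq K\,h(Kt)$, i.e.\ $\Phi(my)\geq2m\Phi(y)$ for all $y>0$ with $m=Ke^{2K}$, which is exactly $\nabla_{2}$ and hence $\Psi\in\Delta_{2}$. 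The point is that the accumulation over \emph{all} dyadic annuli produces the logarithmic factor that finally beats the constant $K\geq1$; no two-scale comparison of $\Phi^{-1}$ does this. Your alternative contrapositive route (widely separated bumps at heights $x_{n}$) can also be completed, but it is exactly the part you leave to ``careful tracking'', and it must in addition treat the global $\Delta_{2}$ condition, i.e.\ possible failure of doubling of $\Psi$ near $0$ as well as near $\infty$, since $\real^{d}$ has infinite measure; the indicator-ball argument above covers both regimes automatically because $R$ ranges over all of $(0,\infty)$.
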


The following lemma shows that the Hardy--Littlewood maximal strong type estimate holds on the Sobolev space $W^{1,p(\cdot)}(\real^{d})$ with a variable exponents $p \in {\mathcal P}^{\log}({\mathbb R}^{d})$.

\begin{Lem}[e.g., Theorem 4.3.8 in \cite{DiHaHaRu11}]
	\label{lem:0.3}
	Let $p \in {\mathcal P}^{\log}({\mathbb R}^{d})$ with $1<p^{-}$.
	Then there exists $A_{p(\cdot)}>0$ such that for any $f \in L^{p(\cdot)}({\mathbb R}^{d})$,
	\begin{align*}
		\|Mf\|_{L^{p(\cdot)}({\mathbb R}^{d})}
		\leq
		A_{p(\cdot)}
		\|f\|_{L^{p(\cdot)}({\mathbb R}^{d})}.
	\end{align*}
\end{Lem}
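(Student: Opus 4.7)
The plan is to derive the norm estimate from its modular reformulation: for $f$ with $\|f\|_{L^{p(\cdot)}(\real^{d})} \leq 1$, which is equivalent to the modular bound $\int_{\real^{d}} |f(y)|^{p(y)} \rd y \leq 1$, I will show $\int_{\real^{d}} (Mf(x))^{p(x)} \rd x \leq C$ for a constant $C = C(p, d)$, and then recover the Luxemburg bound by homogeneity. Since $p^{-} > 1$ and the classical Hardy--Littlewood strong type estimate (Lemma \ref{Lem_key_2} (ii)) is available for exponents strictly above $1$, reducing to a classical $L^{p^{-}}$ inequality is the natural target.

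The central tool is a pointwise \emph{key estimate}: there exist $c = c(p, d) > 0$ and a fixed nonnegative $h \in L^{1}(\real^{d}) \cap L^{\infty}(\real^{d})$, independent of $f$, such that whenever $\int |f(y)|^{p(y)} \rd y \leq 1$,
\begin{align*}
	(Mf(x))^{p(x)}
	\leq
	c \left[ M\bigl(|f(\cdot)|^{p(\cdot)/p^{-}}\bigr)(x) \right]^{p^{-}}
	+
	c\, h(x),
	\qquad x \in \real^{d}.
\end{align*}
To prove this, I fix $x$ and a radius $r > 0$ and compare $\dashint_{B(x;r)} |f(y)| \rd y$ raised to the power $p(x)$ with the average of $|f|^{p(\cdot)/p^{-}}$ raised to the power $p^{-}$ via Jensen's inequality; the residual factor is roughly $|f(y)|^{p(x) - p(y)}$. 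Splitting $f$ according to $\{|f| \leq 1\}$ versus $\{|f| > 1\}$, the local log-Hölder continuity of $1/p$ absorbs the residual in the near region through the crucial bound $|x-y|^{|p(x)-p(y)|} \lesssim 1$, while the log-Hölder decay condition lets one substitute $p(y)$ by $p_{\infty}$ and then by $p(x)$ in the far region; the leftover error is dominated uniformly in $r$ by a fixed decreasing majorant, which one checks to be integrable in $x$ and serves as $h$.

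Granted the key estimate, I integrate in $x$. Because $\int |f(y)|^{p(y)} \rd y \leq 1$, the function $|f|^{p(\cdot)/p^{-}}$ belongs to $L^{p^{-}}(\real^{d})$ with $L^{p^{-}}$-norm at most $1$; invoking Lemma \ref{Lem_key_2} (ii) with exponent $p^{-} > 1$ gives
\begin{align*}
	\int_{\real^{d}}
		\left[ M\bigl(|f|^{p(\cdot)/p^{-}}\bigr)(x) \right]^{p^{-}}
	\rd x
	\leq
	A_{p^{-}}^{p^{-}}.
\end{align*}
Combined with the $L^{1}$-bound on $h$, this yields a uniform modular bound $\int_{\real^{d}} (Mf(x))^{p(x)} \rd x \leq C$, and the Luxemburg norm inequality follows.

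The main obstacle is the pointwise key estimate, specifically the delicate interplay between the two log-Hölder conditions in controlling $|f(y)|^{p(x)-p(y)}$. The local condition supplies exactly the logarithmic modulus that compensates the logarithmic blowup of $\log(1/|x-y|)$ in the near regime, and the decay condition performs the analogous role at infinity through the common exponent $p_{\infty}$; coordinating both regimes under a single integrable majorant $h$ is the most technical step. A further subtlety arises when $p^{+} = \infty$: on $\{p = \infty\}$ the modular constraint forces $|f| \leq 1$ a.e., and the estimate must be rephrased via truncation before integration.
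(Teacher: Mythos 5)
The paper does not actually prove this lemma: it is quoted verbatim from Theorem 4.3.8 in \cite{DiHaHaRu11}, so there is no internal argument to compare yours against. Your outline is, in substance, the standard proof of that cited theorem: reduce to a modular estimate for $\|f\|_{L^{p(\cdot)}(\real^{d})}\leq 1$, prove the pointwise key estimate $(Mf(x))^{p(x)}\lesssim \bigl[M\bigl(|f|^{p(\cdot)/p^{-}}\bigr)(x)\bigr]^{p^{-}}+h(x)$ with a fixed majorant $h\in L^{1}(\real^{d})\cap L^{\infty}(\real^{d})$ extracted from the two log-H\"older conditions, apply the classical strong type estimate of Lemma \ref{Lem_key_2} (ii) with exponent $p^{-}>1$, and recover the Luxemburg bound by convexity of the modular. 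So the route is correct and coincides with the source. Two caveats. First, essentially all of the analytic content sits in the key estimate, which you only sketch; the mechanism you describe is right, but note that the bound you actually need from local log-H\"older continuity is the lower one, $|x-y|^{|p(x)-p(y)|}\geq c>0$ for $|x-y|$ small (equivalently $r^{-d|1/p(x)-1/p(y)|}\lesssim 1$ on balls of radius $r$), not the trivial upper bound you quote, and producing a single majorant $h(x)\simeq (e+|x|)^{-m}$ uniform in the radius is the delicate step. Second, the lemma as stated allows $p^{+}=\infty$, where the modular and the expression $|f|^{p(\cdot)/p^{-}}$ require the usual convention on $\{p=\infty\}$ (there $\|f\|_{L^{p(\cdot)}}\leq1$ forces $|f|\leq1$ a.e. and that set is treated separately); you flag this but do not carry it out. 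Neither point is a wrong step, and for the paper's applications, where $1<p^{-}\leq p^{+}<\infty$, the finite-exponent case suffices.
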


\subsection{Main results}\label{sec_2_3}
In this subsection, we state our main results of this article as multi-dimensional versions of Avikainen's estimate.
We use notations $1/\infty:=0$ and $1/0:=\infty$ for convenience.

We first consider the case of bounded variation in $\real^{d}$.

\begin{Thm}\label{main_0}
Let $X, \widehat{X}:\Omega \to \real^{d}$ be random variables which admit density functions $p_{X}$ and $p_{\widehat{X}}$ with respect to Lebesgue measure, respectively, and let $r \in (1,\infty]$.
Suppose that $p_{X},p_{\widehat{X}} \in L^{\infty}(\real^{d})$.
Then for any $f \in BV(\real^{d}) \cap L^{r}(\real^{d},p_{X}) \cap L^{r}(\real^{d},p_{\widehat{X}})$, $p \in (0,\infty)$ and $q \in [1,r)$, it holds that
	\begin{align}\label{main_0_1}
		\e\left[
			\left|
				f(X)
				-
				f(\widehat{X})
			\right|^{q}
		\right]
		\leq
		C_{BV}(p,q,r)
\e\left[\left|X-\widehat{X}\right|^{p}\right]^{\frac{1-q/r}{p+1}},
\end{align}
where the constant $C_{BV}(p,q,r)$ is defined by
\begin{align*}
	&C_{BV}(p,q,r)
	\\&:=
	\left\{
	\begin{array}{lll}
		\begin{array}{l}
		\displaystyle
			(2\|f\|_{\infty})^{q-1}
			\Big(
				2^{p+1}K_{0}^{p}
				\|f\|_{\infty}
				\\
				\hspace{1.92cm}
				\displaystyle
				+
				A_{1}
				\{
					\|p_{X}\|_{\infty}
					+
					\|p_{\widehat{X}}\|_{\infty}
				\}
				\int_{\real^{d}}
					|Df|
			\Big),
		\end{array}
		&\text{if}\quad
		r=\infty,\\
		\begin{array}{l}
			\hspace{-0.17cm}
			\displaystyle{
				2^{q-1}
				\left(
					2^{p+1}K_{0}^{p}
					+
					A_{1}
					\{
						\|p_{X}\|_{\infty}
						+
						\|p_{\widehat{X}}\|_{\infty}
					\}
					\int_{{\mathbb R}^{d}}
						|Df|
				\right)
			} \\
			\hspace{1.92cm}
			\displaystyle
			+
			\left(
				\|f\|_{L^{r}(\real^{d},p_{X})}
				+
				\|f\|_{L^{r}(\real^{d},p_{\widehat{X}})}
			\right)^{r},
		\end{array}
		&\text{if}\quad
		r \in (1,\infty),
		& {\mathbb E}[|X-\widehat{X}|^{p}]<1, \\
			\displaystyle
				\left(
					\|f\|_{L^{r}(\real^{d},p_{X})}
					+
					\|f\|_{L^{r}(\real^{d},p_{\widehat{X}})}
				\right)^{q},
		&\text{if}\quad
		r \in (1,\infty),
		& {\mathbb E}[|X-\widehat{X}|^{p}] \geq 1.
	\end{array}\right.
\end{align*}
Here, $K_{0}$ and $A_{1}$ are the constants of the pointwise estimate \eqref{Lem_key_0_1} in Lemma \ref{Lem_key_0} and of the Hardy--Littlewood maximal  weak type estimate in Lemma \ref{Lem_key_2} (i), respectively.
\end{Thm}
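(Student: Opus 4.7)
The plan is to reduce the claim to a first-moment estimate and then combine a layer-cake representation with the pointwise characterisation of $BV$-functions from Lemma \ref{Lem_key_0} and the Hardy--Littlewood weak-type inequality from Lemma \ref{Lem_key_2}(i). Throughout, write $\mathcal{E}:=\e[|X-\widehat{X}|^{p}]$ and $C_{1}:=A_{1}(\|p_{X}\|_{\infty}+\|p_{\widehat{X}}\|_{\infty})\int_{\real^{d}}|Df|$. For the main case $r=\infty$ I would start from the deterministic bound $|f(X)-f(\widehat{X})|\le 2\|f\|_{\infty}$ to obtain
\[
	\e\bigl[|f(X)-f(\widehat{X})|^{q}\bigr]
	\le (2\|f\|_{\infty})^{q-1}\,\e\bigl[|f(X)-f(\widehat{X})|\bigr]
	=(2\|f\|_{\infty})^{q-1}\int_{0}^{2\|f\|_{\infty}}\p\bigl(|f(X)-f(\widehat{X})|>h\bigr)\,\rd h.
\]

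For each fixed $h>0$, I would use the pointwise bound \eqref{Lem_key_0_1} (together with $M_{R}\le M$) and split, for any $\delta>0$,
\[
	\p(|f(X)-f(\widehat{X})|>h)
	\le \p(|X-\widehat{X}|>\delta)
	+\p\bigl(M(Df)(X)+M(Df)(\widehat{X})>h/(K_{0}\delta)\bigr).
\]
Markov's inequality handles the first term and yields $\mathcal{E}/\delta^{p}$, while Lemma \ref{Lem_key_2}(i) combined with the bounded densities (via $\p(M(Df)(X)>\lambda)\le\|p_{X}\|_{\infty}\,\mathrm{Leb}(\{M(Df)>\lambda\})$) yields $2K_{0}C_{1}\delta/h$ for the second. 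Balancing these by choosing $\delta=\delta(h):=(h\mathcal{E}/(2K_{0}C_{1}))^{1/(p+1)}$ produces the crucial $h$-wise bound
\[
	\p(|f(X)-f(\widehat{X})|>h)\le 2(2K_{0}C_{1}/h)^{p/(p+1)}\mathcal{E}^{1/(p+1)}.
\]
Integrating this (clipped by $1$) over $h\in(0,2\|f\|_{\infty})$ produces a bound of order $(2K_{0}C_{1})^{p/(p+1)}(2\|f\|_{\infty})^{1/(p+1)}\mathcal{E}^{1/(p+1)}=2(K_{0}^{p}\|f\|_{\infty})^{1/(p+1)}C_{1}^{p/(p+1)}\mathcal{E}^{1/(p+1)}$, which Young's inequality linearises into the additive form $2^{p+1}K_{0}^{p}\|f\|_{\infty}+C_{1}$ appearing in $C_{BV}(p,q,\infty)$.

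For $r\in(1,\infty)$ the overall scheme is the same, but since $f$ is no longer bounded, one cannot reduce directly to the first moment. Instead, I would first split $\e[|f(X)-f(\widehat{X})|^{q}]=\e[\cdots\1_{|X-\widehat{X}|\le\delta}]+\e[\cdots\1_{|X-\widehat{X}|>\delta}]$; the large-error piece is controlled by H\"older's inequality in the form $\e[|f(X)|^{q}\1_{|X-\widehat{X}|>\delta}]\le\|f\|_{L^{r}(\real^{d},p_{X})}^{q}\p(|X-\widehat{X}|>\delta)^{1-q/r}$, which accounts both for the exponent $(1-q/r)/(p+1)$ and for the $L^{r}$ pieces of the constant. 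The small-error piece is then treated by the same layer-cake/optimisation procedure as above, with the role previously played by $\|f\|_{\infty}$ replaced by a truncation coming from $L^{r}$ control. The two sub-cases $\mathcal{E}<1$ and $\mathcal{E}\ge 1$ merely record whether the pointwise $BV$ contribution or the trivial $L^{r}$ bound $(\|f\|_{L^{r}(\real^{d},p_{X})}+\|f\|_{L^{r}(\real^{d},p_{\widehat{X}})})^{q}$ dominates.

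I expect the main obstacle to be obtaining the sharp rate $\mathcal{E}^{1/(p+1)}$ with the precise constant displayed in the statement. A naive three-way partition of $\Omega$ by $\{|X-\widehat{X}|\le\delta\}\cap\{M(Df)(X)\vee M(Df)(\widehat{X})\le\lambda\}$ together with a single pair $(\delta,\lambda)$ equilibrates only two of the three error contributions and typically yields the strictly weaker exponent $1/(2p+1)$, while any attempt to integrate $M(Df)$ against the bounded density by brute force introduces a logarithmic loss because the weak-type bound is only $1/\lambda$. The remedy is to perform the layer-cake decomposition in the outer variable $h$ and optimise $\delta=\delta(h)$ \emph{inside} the integral. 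Matching the precise $K_{0}^{p}\|f\|_{\infty}$ coefficient in the theorem then relies on writing the resulting product bound as $(K_{0}^{p}\|f\|_{\infty})^{1/(p+1)}C_{1}^{p/(p+1)}$ and invoking Young's inequality with the weights $1/(p+1)$ and $p/(p+1)$ matching the $BV$ scaling.
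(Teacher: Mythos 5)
Your strategy is essentially correct in substance but follows a genuinely different route from the paper. The paper never runs a layer cake in the values of $f$: it first proves the estimate for indicator functions $\1_{E}\in BV(\real^{d})$ (Lemma \ref{Lem_key_1}), where the identity $|\1_{E}(x)-\1_{E}(y)|^{q}=|\1_{E}(x)-\1_{E}(y)|^{p}$ from \eqref{Lem_key_1_1} converts the $q$-th moment into a $p$-th moment; it then splits only on the event $\{M(D\1_{E})(X)>\lambda\}\cup\{M(D\1_{E})(\widehat{X})>\lambda\}$, keeps $|X-\widehat{X}|^{p}$ inside the expectation on the complement (so only two terms are balanced with a single $\lambda$, giving the exponent $\tfrac{1}{p+1}$), and finally passes to general $f\in BV(\real^{d})$ via the coarea formula $\int_{\real}\rd t\int_{\real^{d}}|D\1_{E_{t}}|=\int_{\real^{d}}|Df|$. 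You instead reduce to the first moment via $\|f\|_{\infty}$, run the layer cake in $h$, apply Markov with exponent $p$ to $\{|X-\widehat{X}|>\delta\}$ and the weak-type bound of Lemma \ref{Lem_key_2}(i) to the maximal function, and recover $\tfrac{1}{p+1}$ by optimizing $\delta=\delta(h)$ inside the $h$-integral; your diagnosis that a fixed three-way split only yields $\tfrac{1}{2p+1}$ and that the per-level optimization is the cure is correct. Your route dispenses with the coarea formula and the indicator-function trick; the paper's route buys a cleaner mechanism for putting an arbitrary power $p$ on the right-hand side and yields the exact displayed constant.

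Two points fall short of the statement as written. First, the constant: integrating $h^{-p/(p+1)}$ over $(0,2\|f\|_{\infty})$ produces a factor $(p+1)$ and the union bound a factor $2$, so your estimate is $(2\|f\|_{\infty})^{q-1}\,4(p+1)\,(K_{0}^{p}\|f\|_{\infty})^{1/(p+1)}C_{1}^{p/(p+1)}\,\mathcal{E}^{1/(p+1)}$ with $C_{1}=A_{1}\{\|p_{X}\|_{\infty}+\|p_{\widehat{X}}\|_{\infty}\}\int_{\real^{d}}|Df|$; Young's inequality then gives at best $c(p)\,(K_{0}^{p}\|f\|_{\infty}+C_{1})$ with $c(p)\approx 4(p+1)$, and no choice of weights makes the coefficient of $C_{1}$ equal to $1$ while keeping the other coefficient at $2^{p+1}K_{0}^{p}$, so the precise $C_{BV}(p,q,\infty)$ of Theorem \ref{main_0} is not reproduced (the rate and the qualitative dependence are). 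Second, the case $r\in(1,\infty)$ is only sketched, and as literally written the small-error piece is not yet controlled: with a fixed $\delta$, the integral $q\int_{0}^{\infty}h^{q-1}\min\{1,2K_{0}C_{1}\delta/h\}\,\rd h$ diverges at infinity, so you must also insert the Chebyshev tail $\p(|f(X)-f(\widehat{X})|>h)\leq 2^{r}h^{-r}(\|f\|_{L^{r}(\real^{d},p_{X})}^{r}+\|f\|_{L^{r}(\real^{d},p_{\widehat{X}})}^{r})$ for large $h$ (presumably your ``truncation coming from $L^{r}$ control''), and at $q=1$ you must retain the $h$-dependent optimization to avoid a logarithmic loss. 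Once this is done one obtains an exponent at least $\tfrac{1-q/r}{p+1}$ when $\e[|X-\widehat{X}|^{p}]<1$ (in fact slightly better), and the case $\e[|X-\widehat{X}|^{p}]\geq1$ is the trivial $L^{r}$ bound as you say, but again with constants different from those displayed; the paper handles this case more simply by truncating $|f|$ at a level $\lambda>1$, applying H\"older plus Chebyshev on $\{|f(X)|>\lambda\}\cup\{|f(\widehat{X})|>\lambda\}$, and choosing $\lambda=\e[|X-\widehat{X}|^{p}]^{-1/(r(p+1))}$.
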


\begin{Rem}
\label{rem:2.12}
	\begin{itemize}
		\item[(i)]
		In Theorem \ref{main_0}, we need the existence of bounded density functions for both $X$ and $\widehat{X}$ in order to use the poinwise estimate \eqref{Lem_key_0_1} in Lemma \ref{Lem_key_0}.
		%However, we assume the boundedness only for one of them.
		
		\item[(ii)]
		For the one-dimensional case, Theorem \ref{main_0} with $r=\infty$ requires the stronger assumption (i.e., the existence of bounded density functions for both $X$ and $\widehat{X}$) than the result of Avikainen \cite{Av09}.
		Since the constant $C_{BV}(p,q,\infty)$ depends on $\|f\|_{\infty}$ and $K_{0}$, it might be difficult to compare the constants in the upper bound of \eqref{Av_0} and \eqref{main_0_1} in general.
		
		\item[(iii)]
		By using the same way as the proof of Theorem 2.4 (ii) in \cite{Av09}, we can prove that in the estimate \eqref{main_0_1} for $r=\infty$, the power $1/(p+1)$ is optimal, that is, there exist $f \in BV(\real^{d}) \cap L^{\infty}({\mathbb R}^{d})$ and random variables $X$ and $\widehat{X}$ with the bounded density functions such that both sides in \eqref{main_0_1} coincide for some constant $C_{BV}(p,q,\infty)$.
		
		\item[(iv)]
		In the case of $r \in (1,\infty)$ and ${\mathbb E}[|X-\widehat{X}|^{p}] \geq 1$, the power of the right hand side of the estimate \eqref{main_0_1} does not necessarily have to be $\frac{1-q/r}{p+1}$ and can be chosen arbitrarily.
		Indeed, for any $\alpha \geq 0$, since ${\mathbb E}[|X-\widehat{X}|^{p}]^{\alpha} \geq 1$, we obtain
		\begin{align*}
		{\mathbb E}\left[\left|f(X)-f(\widehat{X})\right|^{q}\right]
		&\leq \left(\|f\|_{L^{r}(\real^{d},p_{X})}+\|f\|_{L^{r}(\real^{d},p_{\widehat{X}})}\right)^{q} \\
		&\leq \left(\|f\|_{L^{r}(\real^{d},p_{X})}+\|f\|_{L^{r}(\real^{d},p_{\widehat{X}})}\right)^{q}{\mathbb E}\left[\left|X-\widehat{X}\right|^{p}\right]^{\alpha}.
		\end{align*}
		
		\item[(v)]
		If $r \in (1,\infty)$, then the constant $C_{BV}(p,q,r)$ depends on both densities $p_{X}$ and $p_{\widehat{X}}$ or expectations $\e[|f(X)|^{r}]$ and $\e[|f(\widehat{X})|^{r}]$.
		However, in the application to the multilevel Monte Carlo method in Section \ref{sec_3}, it might be possible to improve these dependence by using the Gaussian upper bound (see, \eqref{GB_1}, \eqref{GB_2} and Remark \ref{Rem_GB_0} (ii)).
	\end{itemize}
\end{Rem}

Before proving Theorem \ref{main_0}, we give a pointwise estimate for functions of locally bounded variation in $\real^{d}$, which plays a crucial role in our arguments.

\begin{Lem}
	\label{Lem_key_0}
	Let $f \in BV_{\mathrm{loc}}(\real^{d})$.
	Then there exist a constant $K_{0}>0$ and a Lebesgue null set $N \in {\mathscr B}({\mathbb R}^{d})$ such that for all $x,y \in \real^{d} \setminus N$,
	\begin{align}
		|f(x)-f(y)|
		&\leq
		K_{0}
		|x-y|
		\left\{
			M_{2|x-y|}(Df)(x)
			+
			M_{2|x-y|}(Df)(y)
		\right\},
		\label{Lem_key_0_1}
	\end{align}
\end{Lem}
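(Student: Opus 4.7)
The plan is to establish the pointwise estimate via a dyadic telescoping argument combined with the Poincaré inequality for $BV$ functions, following the classical Haj\l{}asz-type scheme. The null set $N$ will consist of points that are not Lebesgue points of $f$ (together with a countable union of such sets associated with dyadic balls), and since $f \in BV_{\mathrm{loc}}(\real^{d}) \subset L^{1}_{\mathrm{loc}}(\real^{d})$, this set has Lebesgue measure zero by the Lebesgue differentiation theorem.

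The starting ingredient is the $BV$-Poincar\'e inequality on balls: there exists $C_{d}>0$ such that for every ball $B(z,s) \subset \real^{d}$ and every $f \in BV_{\mathrm{loc}}(\real^{d})$,
\begin{equation*}
	\int_{B(z,s)} |f(w) - f_{B(z,s)}|\,\mathrm{d}w \leq C_{d}\, s\, |Df|(B(z,s)),
\end{equation*}
where $f_{B(z,s)} := \dashint_{B(z,s)} f(w)\,\mathrm{d}w$. Using this with $r := |x-y|$ and the dyadic chain $B_{k} := B(x, r/2^{k})$, I would estimate
\begin{equation*}
	|f_{B_{k+1}} - f_{B_{k}}| \leq \dashint_{B_{k+1}} |f(w) - f_{B_{k}}|\,\mathrm{d}w \leq \frac{2^{d}}{|B_{k}|}\int_{B_{k}} |f - f_{B_{k}}|\,\mathrm{d}w \leq 2^{d} C_{d}\, (r/2^{k})\, M_{r}(Df)(x),
\end{equation*}
so that telescoping (valid at Lebesgue points, where $f_{B_{k}} \to f(x)$) yields $|f(x) - f_{B(x,r)}| \leq K_{1} r\, M_{r}(Df)(x)$ for some $K_{1} = K_{1}(d)$. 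The analogous bound holds at $y$.

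To bridge the two averages $f_{B(x,r)}$ and $f_{B(y,r)}$, I would use that both balls are contained in $B(x, 2r)$ (since $|y-x| = r$). The Poincar\'e inequality applied on $B(x,2r)$ gives
\begin{equation*}
	|f_{B(x,r)} - f_{B(x,2r)}| \leq \frac{2^{d}}{|B(x,2r)|}\int_{B(x,2r)} |f - f_{B(x,2r)}|\,\mathrm{d}w \leq 2^{d+1} C_{d}\, r\, M_{2r}(Df)(x),
\end{equation*}
and identically $|f_{B(y,r)} - f_{B(x,2r)}| \leq 2^{d+1} C_{d}\, r\, M_{2r}(Df)(x)$, using $B(y,r) \subset B(x,2r)$ to control $|B(y,r)|^{-1}$ by $2^{d}|B(x,2r)|^{-1}$. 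Assembling the three pieces with the triangle inequality and monotonicity $M_{r} \leq M_{2r}$ produces the desired estimate with a suitable $K_{0} = K_{0}(d)$.

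The main technical obstacle is less the algebra and more the bookkeeping around the exceptional set: I need to fix a single Lebesgue null set $N$ valid for \emph{all} pairs $(x,y)$. This is handled by declaring $N$ to be the complement of the set of common Lebesgue points of $f$ (which depends only on $f$, not on the scales), and then invoking the fact that for every Lebesgue point $x$, the averages $f_{B(x,s)}$ converge to $f(x)$ as $s \downarrow 0$, so the telescoping identity holds pointwise for $x,y \notin N$ with any choice of $r>0$. The remaining step --- that the $BV$-Poincar\'e constant $C_{d}$ is uniform in the ball --- is standard and follows from rescaling and the classical $BV$ embedding (Remark \ref{Rem_BV_0} (iii)).
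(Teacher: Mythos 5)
Your proposal is correct and follows essentially the same route as the paper's proof: a $BV$-Poincar\'e inequality on balls, a dyadic telescoping chain at Lebesgue points (with the null set $N$ supplied by the Lebesgue differentiation theorem), and a bridge between the averages $f_{B(x,r)}$ and $f_{B(y,r)}$ through the doubled ball $B(x,2r)$ using $B(y,r)\subset B(x,2r)$. The only cosmetic difference is that the paper derives the $L^{1}$ Poincar\'e bound from the $L^{d/(d-1)}$ version via Jensen's inequality for $d\geq 2$ and treats $d=1$ by a separate smoothing argument, whereas you invoke the $(1,1)$-Poincar\'e inequality for $BV$ directly.
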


\begin{Rem}\label{Rem_0}
	%As a consequence of Lemma \ref{Lem_key_0}, the following pointwise estimate holds:
	%\begin{align}\label{pw_esti_1}
	%	|f(x)-f(y)|
	%	&\leq
	%	K_{0}
	%	|x-y|
	%	\left\{
	%		M_{2|x-y|}(Df)(x)
	%		+
	%		M_{2|x-y|}(Df)(y)
	%	\right\},
	%	\text{ $\mathrm{Leb}$-a.e. }x,y \in \real^{d}.
	%\end{align}
	\begin{itemize}
		\item[(i)]
		Note that Theorem 3 in \cite{LaTu14} shows that functions in $BV(\real^{d})$ can be characterized by the estimate \eqref{Lem_key_0_1}.
		
		\item[(ii)]
		In Theorem \ref{main_0}, we need to assume boundedness for the density functions of both random variables $X$ and $\widehat{X}$, which is a stronger assumption than in the one-dimensional case (see, Theorem 2.4(i) in \cite{Av09}).
		Here, if a ``nonsymmetric" version $|f(x)-f(y)|\leq K_{0}|x-y|M_{2|x-y|}(Df)(x)$, $\mathrm{Leb}$-a.e. $x, y \in \real^{d}$ of the pointwise estimate is correct, then we can remove the boundedness of the density function of either $X$ or $\widehat{X}$ in Theorem \ref{main_0}.
		%For example, if we can prove a ``nonsymmetric" version of a pointwise estimate $|f(x)-f(y)|\leq K_{0}|x-y|M_{2|x-y|}(Df)(x)$, $\mathrm{Leb}$-a.e. $x, y \in \real^{d}$, then by assuming the boundedness of the density function in either $X$ or $\widehat{X}$, Avikainen's estimates in Theorem \ref{main_0} holds.
	\end{itemize}

	%If both $X$ and $\widehat{X}$ admit bounded density functions, we can use the pointwise estimate \eqref{pw_esti_1} for proving Theorem \ref{main_0} and Lemma \ref{Lem_key_1}.
	%, but we assume boundedness only for one of them.
	%Thus we need to modify the pointwise estimate from \eqref{pw_esti_1} to \eqref{Lem_key_0_1}.
\end{Rem}

The estimate \eqref{Lem_key_0_1} is basically known in the field of harmonic analysis.
%However, the authors could not find proper references.
For the convenience of readers, we will give a proof below.

\begin{proof}[Proof of Lemma \ref{Lem_key_0}]
	The proof is based on Theorem 3.2 in \cite{HaKo00}.
	We first note that if $d \geq 2$, by using Jensen's inequality and Poincar\'e's inequality for functions of locally bounded variation (see, e.g., Theorem 5.10 (ii) in \cite{EvGa92}), there exists a constant $C_{0}>0$ such that for any $x \in \real^{d}$ and $r>0$,
	\begin{align}\label{Lem_key_0_2}
		\dashint_{B(x;r)}
			\left|
				f(z)
				-
				(f)_{x,r}
			\right|
		\rd z
		&\leq
		\left(
			\dashint_{B(x;r)}
				\left|
					f(z)
					-
					(f)_{x,r}
				\right|^{\frac{d}{d-1}}
			\rd z
		\right)^{\frac{d-1}{d}} \notag\\
		&\leq
		\frac{C_{0}}{\mathrm{Leb}(U(x;r))^{\frac{d-1}{d}}}
		\int_{U(x;r)}|Df| \notag\\
		&\leq
		C_{d}
		r
		M_{r}(Df)(x),
	\end{align}
	where $(f)_{x,r}:=\dashint_{B(x;r)} f(z) \rd z$ and $C_{d}:=C_{0} \sqrt{\pi} \Gamma(d/2+1)^{-1/d}$.
	If $d=1$, there exists $\{f_{k}\}_{k \in \n} \subset C^{1}(U(x;r); {\mathbb R})$ such that $f_{k} \to f$ in $L^{1}(U(x;r))$ and $\int_{U(x;r)} |f_{k}'(z)| \rd z \to \int_{U(x;r)}|Df|$ as $k \to \infty$ (e.g., Theorem 5.3 in \cite{EvGa92} or Theorem 1.17 in \cite{Gi84}).
	Then by using Fatou's lemma and Lemma 4.1 in \cite{EvGa92} with $p=1$, there exists $C_{1}>0$ such that
	\begin{align}\label{Lem_key_0_20}
		\dashint_{B(x;r)}
			\left|
				f(z)
				-
				(f)_{x,r}
			\right|
		\rd z
		&\leq
		\liminf_{k \to \infty}
		\dashint_{U(x;r)}
			\dashint_{U(x;r)}
			\left|
				f_{k}(z)
				-
				f_{k}(y)
			\right|
			\rd y
		\rd z \notag\\
		&\leq C_{1}r
		\liminf_{k \to \infty}
		\dashint_{U(x;r)}
			\dashint_{U(x;r)}
				|f_{k}'(y)|
			\rd y
		\rd z
		\notag
		\\
		&=C_{1}r
		\dashint_{U(x;r)} |Df|
		\leq
		C_{1}rM_{r}(Df)(x).
	\end{align}
	Moreover, the Lebesgue differentiation theorem (e.g., Theorem 1.32 in \cite{EvGa92}) shows that there exists a Lebesgue null set $N \in {\mathscr B}({\mathbb R}^{d})$ such that for any $x \in \real^{d} \setminus N$,
	\begin{align}\label{Lem_key_0_3}
		\lim_{r \to 0}
		(f)_{x,r}
		=
		f(x).
	\end{align}
	Let $x, y \in \real^{d} \setminus N$ be fixed and set $r_{i}:=2^{-i}|x-y|$ for $i \in \n\cup\{0\}$.
	Then by using \eqref{Lem_key_0_3}, we obtain
	\begin{align*}
		|f(x)-(f)_{x,r_{0}}|
		&\leq
		\sum_{i=0}^{\infty}
			\left|
				(f)_{x,r_{i+1}}
				-
				(f)_{x,r_{i}}
			\right|\notag\\
		&\leq
		\sum_{i=0}^{\infty}
		\dashint_{B(x;r_{i+1})}
		\left|
			f(z)
			-
			(f)_{x,r_{i}}
		\right|
		\rd z\notag\\
		&\leq
		2^{d}
		\sum_{i=0}^{\infty}
		\dashint_{B(x;r_{i})}
			\left|
			f(z)
			-
			(f)_{x,r_{i}}
		\right|
		\rd z.
	\end{align*}
	Therefore, it follows from \eqref{Lem_key_0_2} or \eqref{Lem_key_0_20} that
	\begin{align}\label{Lem_key_0_4}
		|f(x)-(f)_{x,r_{0}}|
		&\leq
		2^{d+1}
		C_{d}
		|x-y|
		M_{|x-y|}(Df)(x).
	\end{align}
	By the same way,
	we have
	\begin{align}\label{Lem_key_0_5}
		|f(y)-(f)_{y,r_{0}}|
		\leq
		2^{d+1}
		C_{d}
		|x-y|
		M_{|x-y|}(Df)(y).
	\end{align}
	On the other hand, it holds from \eqref{Lem_key_0_2} or \eqref{Lem_key_0_20} that
	\begin{align}\label{Lem_key_0_6}
		|(f)_{x,r_{0}}-(f)_{y,r_{0}}|
		&\leq
		|(f)_{x,r_{0}}-(f)_{x,2r_{0}}|
		+
		|(f)_{x,2r_{0}}-(f)_{y,r_{0}}| \notag\\
		&\leq
		\dashint_{B(x;r_{0})}
			|f(z)-(f)_{x,2r_{0}}|
		\rd z
		+
		\dashint_{B(y;r_{0})}
			|(f)_{x,2r_{0}}-f(z)|
		\rd z \notag\\
		&\leq
		2^{d+1}
		\dashint_{B(x;2r_{0})}
			|f(z)-(f)_{x,2r_{0}}|
		\rd z \notag\\
		&\leq
		2^{d+2}C_{d}|x-y|M_{2|x-y|}(Df)(x).
	\end{align}
	By combining \eqref{Lem_key_0_4}, \eqref{Lem_key_0_5} and \eqref{Lem_key_0_6}, we conclude the proof.
\end{proof}

By using the Hardy--Littlewood maximal weak type estimate in Lemma \ref{Lem_key_2} (i) and the pointwise estimate \eqref{Lem_key_0_1} in Lemma \ref{Lem_key_0}, we first prove the estimate \eqref{main_0_1} for indicator functions $\1_{E} \in BV(\real^{d})$, which is a multi-dimensional version of Lemma 3.4 in \cite{Av09} and Proposition 5.3 in \cite{GiXi17}.

\begin{Lem}\label{Lem_key_1}
	Let $X, \widehat{X}:\Omega \to \real^{d}$ be random variables which admit density functions $p_{X}$ and $p_{\widehat{X}}$ with respect to Lebesgue measure, respectively.
	Suppose that $p_{X}, p_{\widehat{X}} \in L^{\infty}(\real^{d})$.
	Then for any $E \in \mathscr{B}(\real^{d})$ with ${\bf 1}_{E} \in BV(\real^{d})$ and $p,q \in (0,\infty)$, it holds that
	\begin{align}\label{main_0_2}
		\e\left[
			\left|
				\1_{E}(X)
				-
				\1_{E}(\widehat{X})
			\right|^{q}
		\right]
		\leq
		\left(
			(2K_{0})^{p}
			+
			A_{1}
			\{
				\|p_{X}\|_{\infty}
				+
				\|p_{\widehat{X}}\|_{\infty}
			\}
			\int_{\real^{d}}
				|D\1_{E}|
		\right)
		\e\left[
			\left|
				X
				-
				\widehat{X}
			\right|^{p}
		\right]^{\frac{1}{p+1}}.
	\end{align}
\end{Lem}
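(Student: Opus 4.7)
The plan is to exploit the fact that $|\1_E(X)-\1_E(\widehat{X})|^q$ takes only the values $0$ and $1$, so the $q$-th moment coincides with $\p(\1_E(X) \neq \1_E(\widehat{X}))$. I would split this probability at a threshold $\delta > 0$ to be optimized later, writing
$$\p\bigl(\1_E(X) \neq \1_E(\widehat{X})\bigr) \leq \p\bigl(\1_E(X) \neq \1_E(\widehat{X}),\, |X-\widehat{X}|\leq \delta\bigr) + \p\bigl(|X-\widehat{X}|>\delta\bigr).$$
Markov's inequality handles the second term by $\delta^{-p}\e[|X-\widehat{X}|^p]$.

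For the first term I would apply Lemma \ref{Lem_key_0} to $f=\1_E \in BV(\real^d) \subset BV_{\mathrm{loc}}(\real^d)$. Since $X$ and $\widehat{X}$ admit densities, almost surely neither hits the Lebesgue null exceptional set $N$ from that lemma, and on the event under consideration, using the trivial monotonicity $M_{2|x-y|}\nu \leq M\nu$ together with $|X-\widehat{X}|\leq \delta$,
$$1 = |\1_E(X)-\1_E(\widehat{X})| \leq K_0 \delta\bigl\{M(D\1_E)(X) + M(D\1_E)(\widehat{X})\bigr\}.$$
Hence at least one of $M(D\1_E)(X)$, $M(D\1_E)(\widehat{X})$ must exceed $(2K_0\delta)^{-1}$. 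Bounding each such probability by passing through the corresponding density bound and then applying the Hardy--Littlewood weak type estimate (Lemma \ref{Lem_key_2}(i)) to the finite vector-valued measure $D\1_E$ yields
$$\p\bigl(M(D\1_E)(X)\geq (2K_0\delta)^{-1}\bigr) \leq \|p_X\|_\infty \cdot A_1 \cdot 2K_0\delta \int_{\real^d}|D\1_E|,$$
and analogously for $\widehat{X}$.

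Combining these bounds, the left-hand side of \eqref{main_0_2} is controlled by
$$2A_1 K_0 \delta\bigl\{\|p_X\|_\infty + \|p_{\widehat{X}}\|_\infty\bigr\}\int_{\real^d}|D\1_E| + \delta^{-p}\e[|X-\widehat{X}|^p].$$
Setting $\delta := (2K_0)^{-1}\e[|X-\widehat{X}|^p]^{1/(p+1)}$ (the degenerate case $\e[|X-\widehat{X}|^p]=0$ forces $X=\widehat{X}$ a.s., making the inequality trivial), the two summands combine exactly to $\bigl((2K_0)^p + A_1\{\|p_X\|_\infty+\|p_{\widehat{X}}\|_\infty\}\int_{\real^d}|D\1_E|\bigr)\e[|X-\widehat{X}|^p]^{1/(p+1)}$, matching the stated constant. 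The only real subtlety is calibrating $\delta$ so that the numerical constants reproduce the stated form precisely; note also that the $\{0,1\}$-valued nature of $|\1_E(X)-\1_E(\widehat{X})|$ eliminates any quantitative role of $q$, which is why the lemma holds for all $q \in (0,\infty)$ rather than merely $q\in[1,\infty)$.
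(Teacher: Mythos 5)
Your proposal is correct, and the computation does reproduce the stated constant exactly: with $\delta=(2K_0)^{-1}\e[|X-\widehat{X}|^{p}]^{1/(p+1)}$ the two terms become $A_{1}\{\|p_{X}\|_{\infty}+\|p_{\widehat{X}}\|_{\infty}\}\int_{\real^{d}}|D\1_{E}|\,\e[|X-\widehat{X}|^{p}]^{1/(p+1)}$ and $(2K_{0})^{p}\e[|X-\widehat{X}|^{p}]^{1/(p+1)}$. The route differs from the paper's in how the sample space is split. The paper fixes a level $\lambda$ for the maximal function, decomposes on the event $\{M(D\1_{E})(X)>\lambda\}\cup\{M(D\1_{E})(\widehat{X})>\lambda\}$, bounds that event by the weak type estimate, and on its complement applies the pointwise estimate directly (after replacing the exponent $q$ by $p$ via $|\1_{E}(x)-\1_{E}(y)|^{q}=|\1_{E}(x)-\1_{E}(y)|^{p}$), keeping $\e[|X-\widehat{X}|^{p}]$ inside the expectation; it then optimizes over $\lambda$. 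You instead threshold $|X-\widehat{X}|$ at $\delta$, dispose of the far part by Markov's inequality, and on the near part use the pointwise estimate in contrapositive form to force the maximal function above $(2K_{0}\delta)^{-1}$, where the weak type estimate applies; your identification of the left-hand side with $\p(\1_{E}(X)\neq\1_{E}(\widehat{X}))$ plays the role of the paper's exponent-swapping trick. The two arguments use the same key ingredients (Lemma \ref{Lem_key_0} and Lemma \ref{Lem_key_2} (i)) and are related by the reparametrization $\lambda=(2K_{0}\delta)^{-1}$, so neither buys more than the other; yours has the small extra Markov step, the paper's avoids it. One cosmetic point: you invoke the weak type estimate for the event $\{M(D\1_{E})\geq(2K_{0}\delta)^{-1}\}$ while Lemma \ref{Lem_key_2} (i) is stated with strict inequality; this is harmless since $\mathrm{Leb}(\{M\nu\geq\lambda\})=\lim_{\lambda'\uparrow\lambda}\mathrm{Leb}(\{M\nu>\lambda'\})\leq A_{1}|\nu|(\real^{d})\lambda^{-1}$, but it deserves a line. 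You also correctly handle the degenerate case and the exceptional null set, using that both $X$ and $\widehat{X}$ have densities.
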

\begin{proof}
	If $\e[|X-\widehat{X}|^{p}]=0$ then $X=\widehat{X}$ almost surely, and thus the statement is obvious.
	
	We assume $\e[|X-\widehat{X}|^{p}]>0$.
	For $\lambda>0$, we define the event $\Omega(D \1_{E},\lambda) \in \mathscr{F}$ by
	\begin{align*}
		\Omega(D \1_{E},\lambda)
		:=
		\left\{
			M(D \1_{E})(X)>\lambda
		\right\}
		\cup
		\left\{
			M(D \1_{E})(\widehat{X})>\lambda
		\right\}.
	\end{align*}
	We first remark that for any $x,y \in \real^{d}$, it holds that
	\begin{align}\label{Lem_key_1_1}
		|\1_{E}(x)-\1_{E}(y)|^{q}
		=
		|\1_{E}(x)-\1_{E}(y)|^{p}.
	\end{align}
	By using this trick, we obtain
	\begin{align*}
		{\mathbb E}
		\left[
			\left|
				\1_{E}(X)
				-
				\1_{E}(\widehat{X})
			\right|^{q}
		\right]
		=
		{\mathbb E}
		\left[
			\left|
				\1_{E}(X)
				-
				\1_{E}(\widehat{X})
			\right|^{p}
			{\bf 1}_{\Omega(D \1_{E},\lambda)}
		\right]
		+
		{\mathbb E}
		\left[
			\left|
				\1_{E}(X)
				-
				\1_{E}(\widehat{X})
			\right|^{p}
			{\bf 1}_{\Omega(D \1_{E},\lambda)^{{\rm c}}}
		\right].
	\end{align*}
	On the event $\Omega(D \1_{E},\lambda)$, since $X$ and $\widehat{X}$ have bounded density functions, by using Lemma \ref{Lem_key_2} (i), we have
	\begin{align}
		\label{eq:7}
		{\mathbb E}
			\left[
				\left|
					\1_{E}(X)
					-
					\1_{E}(\widehat{X})
				\right|^{p}
				{\bf 1}_{\Omega(D \1_{E},\lambda)}
			\right]
		&\leq
		{\mathbb P}(M(D{\bf 1}_{E})(X)>\lambda)
		+
		{\mathbb P}(M(D{\bf 1}_{E})(\widehat{X})>\lambda)
		\notag\\&
		\leq
		A_{1}
		\{
			\|p_{X}\|_{\infty}
			+
			\|p_{\widehat{X}}\|_{\infty}
		\}
		\int_{\real^{d}}
			|D\1_{E}|
		\lambda^{-1}.
	\end{align}
	Let $N \in {\mathscr B}({\mathbb R}^{d})$ be the Lebesgue null set provided by Lemma \ref{Lem_key_0} for $f=\1_{E}$.
	On the event $\Omega(D \1_{E},\lambda)^{{\rm c}}$, since $X$ and $\widehat{X}$ have density functions, by Lemma \ref{Lem_key_0}, we obtain
	\begin{align}
	\label{eq:8}
		{\mathbb E}
			\left[
				\left|
					\1_{E}(X)
					-
					\1_{E}(\widehat{X})
				\right|^{p}
				{\bf 1}_{\Omega(D \1_{E},\lambda)^{{\rm c}}}
			\right]
		&=
		{\mathbb E}
		\left[
			\left|
				\1_{E}(X)
				-
				\1_{E}(\widehat{X})
			\right|^{p}
			{\bf 1}_{\Omega(D \1_{E},\lambda)^{{\rm c}}}
			\1_{\real^{d} \setminus N}(X)
			\1_{\real^{d} \setminus N}(\widehat{X})
		\right]
		\notag\\
		&\leq
		K_{0}^{p}
		{\mathbb E}
			\left[
				\left|
					X
					-
					\widehat{X}
				\right|^{p}
				\{
					M(D\1_{E})(X)
					+
					M(D\1_{E})(\widehat{X})
				\}^{p}
				{\bf 1}_{\Omega(D \1_{E},\lambda)^{{\rm c}}}
			\right]
		\notag \\
		&\leq
		(2K_{0})^{p}
		\lambda^{p}
		{\mathbb E}
			\left[
				\left|
					X
					-
					\widehat{X}
				\right|^{p}
			\right].
	\end{align}
	Hence, by \eqref{eq:7} and \eqref{eq:8}, we have
	\begin{align*}
		{\mathbb E}
			\left[
				\left|
					\1_{E}(X)
					-
					\1_{E}(\widehat{X})
				\right|^{q}
			\right]
		\leq
			A_{1}
			\{
				\|p_{X}\|_{\infty}
				+
				\|p_{\widehat{X}}\|_{\infty}
			\}
			\int_{\real^{d}}
			|D\1_{E}|\lambda^{-1}
			+
			(2K_{0})^{p}
			\lambda^{p}
			{\mathbb E}
			\left[
				\left|
					X
					-
					\widehat{X}
				\right|^{p}
			\right].
	\end{align*}
	Now we choose $\lambda:={\mathbb E}[|X-\widehat{X}|^{p}]^{-R}>0$ for some $R>0$.
	Then we obtain
	\begin{align*}
		{\mathbb E}
		\left[
			\left|
				\1_{E}(X)
				-
				\1_{E}(\widehat{X})
			\right|^{q}
		\right]
		\leq
		A_{1}
		\{
			\|p_{X}\|_{\infty}
			+
			\|p_{\widehat{X}}\|_{\infty}
		\}
		\int_{\real^{d}}
			|D\1_{E}|
			{\mathbb E}
			\left[
				\left|
					X
					-
					\widehat{X}
				\right|^{p}
			\right]^{R}
			+
			(2K_{0})^{p}
			{\mathbb E}
			\left[
				\left|
					X
					-
					\widehat{X}
				\right|^{p}
			\right]^{1-pR}.
	\end{align*}
	By choosing $R$ as $R=1-pR$, that is, $R=\frac{1}{p+1}$, then we have
	\begin{align*}
		{\mathbb E}
		\left[
			\left|
				\1_{E}(X)
				-
				\1_{E}(\widehat{X})
			\right|^{q}
		\right]
		\leq
		\left(
			A_{1}
			\{
				\|p_{X}\|_{\infty}
				+
				\|p_{\widehat{X}}\|_{\infty}
			\}
			\int_{\real^{d}}
			|D\1_{E}|
			+
			(2K_{0})^{p}
		\right)
		{\mathbb E}
		\left[
			\left|
				X
				-
				\widehat{X}
			\right|^{p}
		\right]^{\frac{1}{p+1}},
	\end{align*}
	which concludes the statement.
\end{proof}

\begin{Rem}
	Note that the equation \eqref{Lem_key_1_1} is the key trick for replacing the power $q$ in the left hand side of Avikainen's estimates \eqref{main_0_1} and \eqref{main_0_2} by $p$ in the right hand side.
\end{Rem}

By using Lemma \ref{Lem_key_1} with the coarea formula for functions of bounded variation, we now prove Theorem \ref{main_0} for general functions $f \in BV(\real^{d})$.

\begin{proof}[Proof of Theorem \ref{main_0}]
For $\lambda>0$, we define the event $\Omega(f,\lambda) \in \mathscr{F}$ by
	\begin{align*}
		\Omega(f,\lambda)
		:=
		\{
			|f(X)|>\lambda
		\}
		\cup
		\{
			|f(\widehat{X})|>\lambda
		\}.
	\end{align*}
	For $t \in \real$, we define $E_{t}:=\{x \in \real^{d}~;~f(x) > t\}$.
	Then for any $x,y \in \real^{d}$, it holds that
	\begin{align*}
		|f(x)-f(y)|
		=
		\int_{f(x) \wedge f(y)}^{f(x)\vee f(y)}
			\left|
				\1_{E_{t}}(x)
				-
				\1_{E_{t}}(y)
			\right|
		\rd t.
	\end{align*}
	Hence, since $q \in [1,\infty)$, by using Jensen's inequality, it holds that
	\begin{align*}
		\e\left[
			\left|
				f(X)
				-
				f(\widehat{X})
			\right|^{q}
			\1_{\Omega(f,\lambda)^{{\rm c}}}
		\right]
		\leq
		(2\lambda)^{q-1}
		\int_{-\lambda}^{\lambda}
			\e\left[
				\left|
					\1_{E_{t}}(X)
					-
					\1_{E_{t}}(\widehat{X})
				\right|^{q}
			\right]
		\rd t.
	\end{align*}
	It follows from Theorem 5.9 (i) in \cite{EvGa92} that $\1_{E_{t}} \in BV(\real^{d})$ for $\mathrm{Leb}$-a.e. $t \in \real$.
	Note that by the coarea formula for functions of bounded variation (see, e.g., Theorem 5.9 (ii) in \cite{EvGa92}), it holds that
	\begin{align*}
		\int_{\real}
			\rd t
			\int_{\real^{d}}
				|D\1_{E_{t}}|
		=
	\int_{\real^{d}}
		|Df|.
	\end{align*}
	Therefore, by using Lemma \ref{Lem_key_1} with $E=E_{t}$, we obtain
	\begin{align}\label{eq_0_2}
		&\e\left[
			\left|
				f(X)
				-
				f(\widehat{X})
			\right|^{q}
			\1_{\Omega(f,\lambda)^{{\rm c}}}
		\right] \notag
		\\&\leq
		(2\lambda)^{q-1}
		\int_{-\lambda}^{\lambda}
			\left(
				(2K_{0})^{p}
				+
				A_{1}
				\{
					\|p_{X}\|_{\infty}
					+
					\|p_{\widehat{X}}\|_{\infty}
				\}
				\int_{\real^{d}}
					|D\1_{E_{t}}|
			\right)
		\rd t
		\e\left[
			\left|
				X
				-
				\widehat{X}
			\right|^{p}
		\right]^{\frac{1}{p+1}} \notag\\
		&\leq
		(2\lambda)^{q-1}
		\left(
			2^{p+1}K_{0}^{p}
			\lambda
			+
			A_{1}
			\{
				\|p_{X}\|_{\infty}
				+
				\|p_{\widehat{X}}\|_{\infty}
			\}
			\int_{\real^{d}}
				|Df|
		\right)
		\e\left[
			\left|
				X
				-
				\widehat{X}
			\right|^{p}
		\right]^{\frac{1}{p+1}}.
	\end{align}
	If $r=\infty$ (i.e., $f$ is
	essentially bounded with respect to Lebesgue measure), then since $X$ and $\widehat{X}$ have density functions, by choosing $\lambda:=\|f\|_{\infty}$, it holds that $\p(\Omega(f,\lambda)^{{\rm c}})=1$.
	Thus the estimate \eqref{eq_0_2} implies that the estimate \eqref{main_0_1} in the case of $r=\infty$ holds.
	
	We next show the estimate \eqref{main_0_1} in the case of $r \in (1,\infty)$ and ${\mathbb E}[|X-\widehat{X}|^{p}]<1$.
	On the event $\Omega(f,\lambda)$, by using H\"older's inequality with $\frac{1}{r/q}+\frac{1}{r/(r-q)}=1$, we obtain
	\begin{align}\label{eq_0_3}
		\e\left[
			\left|
				f(X)
				-
				f(\widehat{X})
			\right|^{q}
			\1_{\Omega(f,\lambda)}
		\right]
		&\leq
		\left(\|f\|_{L^{r}({\mathbb R}^{d},p_{X})}+\|f\|_{L^{r}({\mathbb R}^{d},p_{\widehat{X}})}\right)^{q}
		\p(\Omega(f,\lambda))^{1-\frac{q}{r}} \notag\\
		&\leq
		\left(\|f\|_{L^{r}({\mathbb R}^{d},p_{X})}+\|f\|_{L^{r}({\mathbb R}^{d},p_{\widehat{X}})}\right)^{r}
		\lambda^{-(r-q)}.
	\end{align}
	We choose $\lambda:=(\e[|X-\widehat{X}|^{p}]^{\frac{1}{p+1}})^{-1/r}>1$.
	Then by \eqref{eq_0_2} and \eqref{eq_0_3}, we have
	\begin{align*}
		{\mathbb E}
		\left[
			\left|
				f(X)
				-
				f(\widehat{X})
			\right|^{q}
		\right]
		&\leq 
			2^{q-1}\lambda^{q}
			\left(
				2^{p+1}K_{0}^{p}
				+
				A_{1}
				\{
					\|p_{X}\|_{\infty}
					+
					\|p_{\widehat{X}}\|_{\infty}
				\}
				\int_{{\mathbb R}^{d}}|Df|
			\right)
			{\mathbb E}
			\left[
				\left|
					X
					-
					\widehat{X}
				\right|^{p}
			\right]^{\frac{1}{p+1}}\\
		&\hspace{0.35cm}
		+
		\left(
			\|f\|_{L^{r}({\mathbb R}^{d},p_{X})}
			+
			\|f\|_{L^{r}({\mathbb R}^{d},p_{\widehat{X}})}
		\right)^{r}
		\lambda^{-(r-q)} \\
		&=C_{BV}(p,q,r)
		\e\left[
			\left|
				X
				-
				\widehat{X}
			\right|^{p}
		\right]^{\frac{1-q/r}{p+1}},
	\end{align*}
	which concludes the estimate \eqref{main_0_1} in the case of $r \in (1,\infty)$ and ${\mathbb E}[|X-\widehat{X}|^{p}]<1$.
	In the case of $r \in (1,\infty)$ and ${\mathbb E}[|X-\widehat{X}|^{p}] \geq 1$, it is already shown in Remark \ref{rem:2.12} (iv).
\end{proof}

\subsubsection*{Orlicz--Sobolev spaces and Sobolev spaces with variable exponents}
For a function $f$ in $BV_{\mathrm{loc}}(\real^{d})$ or $W^{1,1}_{\mathrm{loc}}(\real^{d})$, $\int_{\real^{d}}|Df|$ might not be finite, and thus it is difficult to estimate the probability $\p(M(Df)(X)>\lambda)$.
Therefore, we now consider Avikainen's estimates for several subspaces of $W^{1,1}_{\mathrm{loc}}(\real^{d})$.

We first consider the case of the Orlicz--Sobolev space.

\begin{Thm}\label{main_1}
	Let $\Phi$ be an $N$-function and $\Psi$ be its complementary function.
	Suppose that $\Psi$ satisfies the $\Delta_{2}$-condition.
	Let $X, \widehat{X}:\Omega \to \real^{d}$ be random variables which admit density functions $p_{X}$ and $p_{\widehat{X}}$ with respect to Lebesgue measure, respectively, and let $r \in (1,\infty]$.
	Suppose that $p_{X}, p_{\widehat{X}} \in  L^{\infty}(\real^{d}) $ or  $p_{X}, p_{\widehat{X}} \in  L^{\Psi}(\real^{d})$.
	Then for any $f \in W^{1,\Phi}(\real^{d}) \cap L^{r}(\real^{d},p_{X}) \cap L^{r}(\real^{d},p_{\widehat{X}})$ and $q \in (0,r)$, it holds that
\begin{align}\label{main_1_1}
	&\e
	\left[
		\left|
			f(X)
			-
			f(\widehat{X})
		\right|^{q}
	\right] \notag \\
	&\leq
	\left\{
	\begin{array}{ll}
	\displaystyle
	C_{W^{1,\Phi}}(q,r,\infty)
	\inf_{\lambda>0}
	\left\{
		\lambda^{-(1-\frac{q}{r})}
		+
		(\Phi^{-1}(\lambda))^{q}
		{\mathbb E}
		\left[
			\left|
				X-\widehat{X}
			\right|^{q}
		\right]
	\right\},
	&\text{if} \quad p_{X}, p_{\widehat{X}} \in L^{\infty}(\real^{d}), \\
	\displaystyle
	C_{W^{1,\Phi}}(q,r,\Psi)
	\e\left[
		\left|
			X
			-
			\widehat{X}
		\right|^{q}
	\right]^{\frac{1-q/r}{q+1-q/r}},
	&\text{if} \quad p_{X}, p_{\widehat{X}} \in L^{\Psi}(\real^{d}),
	\end{array}\right.
\end{align}
where the constants $C_{W^{1,\Phi}}(q,r,\infty)$ and $C_{W^{1,\Phi}}(q,r,\Psi)$ are defined by
\begin{align*}
	&C_{W^{1,\Phi}}(q,r,\infty)
	\\&:=
	\max\left\{
		\left(
			\frac{2K_{0}}{\alpha}
		\right)^{q},
		\left(
			\|f\|_{L^{r}(\real^{d},p_{X})}
			+
			\|f\|_{L^{r}(\real^{d},p_{\widehat{X}})}
		\right)^{q}
		\left(
			A_{1}
			\{
				\|p_{X}\|_{\infty}
				+
				\|p_{\widehat{X}}\|_{\infty}
			\}
			\|\Phi(\alpha|Df|)\|_{L^{1}({\mathbb R}^{d})}
		\right)^{1-\frac{q}{r}}
	\right\}, \\
	&C_{W^{1,\Phi}}(q,r,\Psi)
	\\&:=
	(2K_{0})^{q}
	+
	\left(
		\|f\|_{L^{r}(\real^{d},p_{X})}
		+
		\|f\|_{L^{r}(\real^{d},p_{\widehat{X}})}
	\right)^{q}
	\left(
		2A_{\Phi}
		\{
			\|p_{X}\|_{L^{\Psi}({\mathbb R}^{d})}
			+
			\|p_{\widehat{X}}\|_{L^{\Psi}({\mathbb R}^{d})}
		\}
		\| |Df| \|_{L^{\Phi}({\mathbb R}^{d})}
	\right)^{1-\frac{q}{r}}.
\end{align*}
Here, $K_{0}$, $A_{1}$ and $A_{\Phi}$ are the constants of the pointwise estimate \eqref{Lem_key_0_1} in Lemma \ref{Lem_key_0}, of the Hardy--Littlewood maximal weak and strong type estimates in Lemma \ref{Lem_key_2} (i) and Lemma \ref{Lem_key_3}, respectively, and $\alpha$ is any positive constant such that $\|\Phi(\alpha|Df|)\|_{L^{1}({\mathbb R}^{d})}<\infty$.
\end{Thm}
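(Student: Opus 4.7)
The plan is to adapt the strategy of Theorem \ref{main_0} to the Orlicz--Sobolev setting. By Remark \ref{Rem_Orlicz_1} (iv), $W^{1,\Phi}(\real^{d}) \subset W^{1,1}_{\mathrm{loc}}(\real^{d}) \subset BV_{\mathrm{loc}}(\real^{d})$, so the pointwise estimate of Lemma \ref{Lem_key_0} is available for $f$. For a parameter $\mu>0$ to be chosen later, I introduce the exceptional event
\[
\Omega(Df,\mu) := \{M(Df)(X) > \mu\} \cup \{M(Df)(\widehat{X}) > \mu\}
\]
and split $\e[|f(X)-f(\widehat{X})|^{q}]$ accordingly. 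On $\Omega(Df,\mu)^{{\rm c}}$, since $M_{R}(Df) \leq M(Df)$ for every $R>0$ and since $X$ and $\widehat{X}$ admit density functions, Lemma \ref{Lem_key_0} yields $|f(X)-f(\widehat{X})|^{q} \leq (2K_{0}\mu)^{q}|X-\widehat{X}|^{q}$ almost surely, so this contribution is at most $(2K_{0}\mu)^{q}\e[|X-\widehat{X}|^{q}]$. On $\Omega(Df,\mu)$, H\"older's inequality on $(\Omega,\mathscr{F},\p)$ with exponents $r/q$ and $r/(r-q)$ together with Minkowski's inequality gives
\[
\e\bigl[|f(X)-f(\widehat{X})|^{q}\1_{\Omega(Df,\mu)}\bigr] \leq \bigl(\|f\|_{L^{r}(\real^{d},p_{X})} + \|f\|_{L^{r}(\real^{d},p_{\widehat{X}})}\bigr)^{q} \p(\Omega(Df,\mu))^{1-q/r}.
\]
It remains to bound $\p(\Omega(Df,\mu))$ in each of the two cases.

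In Case 1 ($p_{X},p_{\widehat{X}} \in L^{\infty}(\real^{d})$), the estimate $\p(\Omega(Df,\mu)) \leq (\|p_{X}\|_{\infty}+\|p_{\widehat{X}}\|_{\infty})\mathrm{Leb}(\{M(Df)>\mu\})$ reduces matters to an unweighted bound for the maximal function, but the obstacle is that $|Df|$ need not lie in $L^{1}(\real^{d})$, so Lemma \ref{Lem_key_2} (i) cannot be applied to $Df$ directly. The key trick is Jensen's inequality for the concave function $\Phi^{-1}$: for each ball $B(x;s)$,
\[
\Phi^{-1}\!\left(\dashint_{B(x;s)} \Phi(\alpha|Df|(y)) \rd y\right) \geq \dashint_{B(x;s)} \alpha|Df|(y) \rd y.
\]
Taking the supremum over $s>0$ and using monotonicity of $\Phi^{-1}$ yields the pointwise bound $\alpha M(Df)(x) \leq \Phi^{-1}(M\Phi(\alpha|Df|)(x))$, whence the inclusion $\{M(Df)>\mu\} \subset \{M\Phi(\alpha|Df|) > \Phi(\alpha\mu)\}$. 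Since $\Phi(\alpha|Df|) \in L^{1}(\real^{d})$, Lemma \ref{Lem_key_2} (i) now applies and gives $\mathrm{Leb}(\{M(Df)>\mu\}) \leq A_{1} \|\Phi(\alpha|Df|)\|_{L^{1}(\real^{d})}/\Phi(\alpha\mu)$. Setting $\mu = \Phi^{-1}(\lambda)/\alpha$ so that $\Phi(\alpha\mu)=\lambda$, adding the two contributions, and factoring out the maximum of the resulting coefficients as $C_{W^{1,\Phi}}(q,r,\infty)$, one arrives at the claimed bound upon taking the infimum over $\lambda>0$.

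In Case 2 ($p_{X},p_{\widehat{X}} \in L^{\Psi}(\real^{d})$), I control $\p(\Omega(Df,\mu))$ through Chebyshev's inequality followed by the generalized H\"older inequality \eqref{eq_GHolder}:
\[
\p(M(Df)(X)>\mu) \leq \frac{1}{\mu} \int_{\real^{d}} M(Df)(x) p_{X}(x) \rd x \leq \frac{2}{\mu} \|M(Df)\|_{L^{\Phi}(\real^{d})} \|p_{X}\|_{L^{\Psi}(\real^{d})},
\]
and analogously for $\widehat{X}$. Because $\Psi$ satisfies the $\Delta_{2}$-condition, the Orlicz strong-type estimate of Lemma \ref{Lem_key_3} bounds $\|M(Df)\|_{L^{\Phi}(\real^{d})}$ by $A_{\Phi}\||Df|\|_{L^{\Phi}(\real^{d})}$. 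Together with the $\Omega(Df,\mu)^{\rm c}$ contribution, the total expectation is dominated by a two-term sum of the form $A\mu^{q}\e[|X-\widehat{X}|^{q}] + B\mu^{-(1-q/r)}$ with explicit $A,B>0$; elementary one-variable calculus optimizes this at $\mu \propto (B/(A\e[|X-\widehat{X}|^{q}]))^{1/(q+1-q/r)}$, producing exactly the exponent $(1-q/r)/(q+1-q/r)$ on $\e[|X-\widehat{X}|^{q}]$ and the constant $C_{W^{1,\Phi}}(q,r,\Psi)$ announced in the statement. The main obstacle throughout is the Jensen reduction in Case 1; once the pointwise bound $\alpha M(Df) \leq \Phi^{-1}(M\Phi(\alpha|Df|))$ is in hand, the rest is careful bookkeeping that runs in parallel with the proof of Theorem \ref{main_0}.
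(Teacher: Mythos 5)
Your proposal is correct and follows essentially the same route as the paper's proof: the same splitting over a maximal-function level event, Lemma \ref{Lem_key_0} together with H\"older's inequality in $L^{r}(\real^{d},p_{X})$, the weak type estimate of Lemma \ref{Lem_key_2} (i) applied to $\Phi(\alpha|Df|)$ in the bounded-density case, and Markov plus the generalized H\"older inequality \eqref{eq_GHolder} with Lemma \ref{Lem_key_3} in the $L^{\Psi}$ case, with the same choice (up to optimizing rather than fixing $\lambda$) of the threshold. The only cosmetic difference is that in the first case you define the bad event via $M(Df)$ and transfer to $M(\Phi(\alpha|Df|))$ through the Jensen bound $\alpha M(Df)\leq \Phi^{-1}(M(\Phi(\alpha|Df|)))$, whereas the paper defines the event directly via $M(\Phi(\alpha|Df|))$ and uses the Jensen-modified pointwise estimate \eqref{eq:21} on its complement; these are equivalent and yield the same constants.
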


\begin{Rem}\label{rem:2.13}
	\begin{itemize}
		\item[(i)]
		In the right hand side of the estimates \eqref{main_1_1}, the power inside of the expectation is $q$ not $p \in (0,\infty)$ unlike the case of $BV(\real^{d})$ (see, Theorem \ref{main_0}).
		The reason is that we do not know the indicator function $\1_{E_{t}}$, $E_{t}:=\{x \in \real^{d}~;~f(x) > t\}$ belongs to $BV(\real^{d})$ or $W^{1,\Phi}(\real^{d})$, and thus we cannot apply the trick \eqref{Lem_key_1_1} for replacing the power $q$ by $p$.
		
		\item[(ii)]
		Let $\Phi$ be a Young function.
		Since $W^{1,\Phi}(\real^{d}) \subset BV_{\mathrm{loc}}(\real^{d})$ (see, Remark \ref{Rem_Orlicz_1} (iv)), the pointwise estimates in Lemma \ref{Lem_key_0} and Remark \ref{Rem_0} hold for $f \in W^{1,\Phi}({\mathbb R}^{d})$.
		Moreover, by using Jensen's inequality for the convex function $\Phi$, for $\mathrm{Leb}$-a.e. $x,y \in {\mathbb R}^{d}$,
		\begin{align}\label{eq:21}
			|f(x)-f(y)|
			\leq
			K_{1}|x-y|
			\left\{
				\Phi^{-1}(M_{2|x-y|}(\Phi(|Df|))(x))
				+
				\Phi^{-1}(M_{2|x-y|}(\Phi(|Df|))(y))
		\right\}.
		\end{align}
		%and
		%\begin{align}\label{eq:22}
		%	|f(x)-f(y)|
		%	\leq
		%	K_{1}|x-y|
		%	\left\{
		%		\Phi^{-1}(M_{2|x-y|}(\Phi(|Df|))(x))
		%		+
		%		\Phi^{-1}(M_{2|x-y|}(\Phi(|Df|))(y))
		%	\right\}.
		%\end{align}
		Theorem 1.2 in \cite{Tu07} shows that functions $f \in W^{1,\Phi}(\real^{d})$ can be characterized by the estimate \eqref{eq:21}.
		
		\item[(iii)]
		If $r \in (1,\infty)$, then the constants $C_{W^{1,\Phi}}(q,r,\infty)$ and $C_{W^{1,\Phi}}(q,r,\Psi)$ depend on both densities $p_{X}$ and $p_{\widehat{X}}$ or expectations $\e[|f(X)|^{r}]$ and $\e[|f(\widehat{X})|^{r}]$ (see, Remark \ref{rem:2.12} (v)) .
\end{itemize}
\end{Rem}

As a conclusion of Theorem \ref{main_0} and Theorem \ref{main_1} noting Example \ref{Ex_Orlicz_0} (i), we obtain the following estimates for the Sobolev space $W^{1,p}(\real^{d})$ for $p \in [1,\infty)$.

\begin{Cor}\label{Cor_1}
	Let $X, \widehat{X}:\Omega \to \real^{d}$ be random variables which admit density functions $p_{X}$ and $p_{\widehat{X}}$ with respect to Lebesgue measure, respectively, and let $r \in (1,\infty]$, $p \in [1,\infty)$ and $p^{*}:=p/(p-1)$.
	Suppose that $p_{X}, p_{\widehat{X}} \in  L^{\infty}(\real^{d})$ or $p_{X},p_{\widehat{X}} \in  L^{p^{*}}(\real^{d})$.
	Then for any $f \in W^{1,p}(\real^{d}) \cap L^{r}(\real^{d},p_{X}) \cap L^{r}(\real^{d},p_{\widehat{X}})$ and $q \in (0,r)$, there exist $C_{W^{1,p}}(q,r,\infty)>0$ and $C_{W^{1,p}}(q,r,p^{*})>0$ such that
	\begin{align*}
		\e\left[
			\left|
				f(X)
				-
				f(\widehat{X})
			\right|^{q}
		\right]
		&\leq
		\left\{ \begin{array}{ll}
		\displaystyle
			C_{W^{1,p}}(q,r,\infty)
			\e\left[
				\left|
					X
					-
					\widehat{X}
				\right|^{q}
			\right]^{\frac{p(1-q/r)}{q+p(1-q/r)}},
			&\text{if} \quad p_{X}, p_{\widehat{X}} \in L^{\infty}(\real^{d}),\\
		\displaystyle
			C_{W^{1,p}}(q,r,p^{*})
			\e\left[
				\left|
					X
					-
					\widehat{X}
				\right|^{q}
			\right]^{\frac{1-q/r}{q+1-q/r}},
		&\text{if} \quad p_{X}, p_{\widehat{X}} \in L^{p^{*}}(\real^{d}).
		\end{array}\right.
	\end{align*}
\end{Cor}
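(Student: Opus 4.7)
The plan is to reduce to the two master theorems, invoking Theorem~\ref{main_0} when $p=1$ and Theorem~\ref{main_1} when $p\in(1,\infty)$. When $p=1$, one has $p^{*}=\infty$, so the two alternative density conditions coincide with ``$p_{X},p_{\widehat{X}}\in L^{\infty}$'' and the two claimed exponents both collapse to $\frac{1-q/r}{q+1-q/r}$. By Example~\ref{Eg_0}(i), $W^{1,1}(\real^{d})\subset BV(\real^{d})$, hence $f\in BV(\real^{d})\cap L^{r}(\real^{d},p_{X})\cap L^{r}(\real^{d},p_{\widehat{X}})$ and Theorem~\ref{main_0} applies: choosing its inner parameter equal to $q$ yields $\e[|f(X)-f(\widehat{X})|^{q}]\leq C\,\e[|X-\widehat{X}|^{q}]^{(1-q/r)/(q+1)}$, which is at least as sharp as the Corollary's bound on the range $\e[|X-\widehat{X}|^{q}]\geq 1$; extending to the full range (and, for $q\in(0,1)$, reducing first to $q=1$ via Jensen's inequality for the concave map $x\mapsto x^{q}$) recovers the stated bound.

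For $p\in(1,\infty)$, the idea is to put $L^{p}$ into the Orlicz framework of Theorem~\ref{main_1} by taking the Young function $\Phi(x):=x^{p}/p$. By Example~\ref{Ex_Orlicz_0}(i), $\Phi$ is an $N$-function satisfying the $\Delta_{2}$-condition, its complementary function is $\Psi(x)=x^{p^{*}}/p^{*}$ (also an $N$-function satisfying $\Delta_{2}$), and the Orlicz spaces $L^{\Phi}(\real^{d})$ and $L^{\Psi}(\real^{d})$ coincide with $L^{p}(\real^{d})$ and $L^{p^{*}}(\real^{d})$ respectively up to equivalent norms; in particular, $W^{1,\Phi}(\real^{d})=W^{1,p}(\real^{d})$ and Theorem~\ref{main_1} applies in both sub-cases. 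In the $L^{\infty}$-density sub-case, substituting $\Phi^{-1}(\lambda)=(p\lambda)^{1/p}$ into the infimum reduces it to the one-variable calculus problem
\[
\inf_{\lambda>0}\bigl\{\lambda^{-(1-q/r)}+p^{q/p}\lambda^{q/p}\,a\bigr\},\qquad a:=\e[|X-\widehat{X}|^{q}].
\]
Setting the $\lambda$-derivative to zero gives $\lambda^{(1-q/r)+q/p}\sim a^{-1}$, and substituting back yields the optimal value of order $a^{p(1-q/r)/[q+p(1-q/r)]}$, matching the asserted exponent. In the $L^{p^{*}}$-density sub-case, Theorem~\ref{main_1} directly delivers the exponent $\frac{1-q/r}{q+1-q/r}$.

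The main technical step is the one-variable minimization in the $p>1$, bounded-density sub-case; isolating the correct exponent on $a$ from the two competing $\lambda$-powers is a Lagrange-multiplier computation that must be carried out carefully. Everything else is bookkeeping: verifying that $\Phi(x)=x^{p}/p$ and its complementary function satisfy the hypotheses of Theorem~\ref{main_1}, noting the identification $W^{1,\Phi}=W^{1,p}$, and invoking the inclusion $W^{1,1}\subset BV$ from Example~\ref{Eg_0}(i) so that Theorem~\ref{main_0} applies in the $p=1$ case.
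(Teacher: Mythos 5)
Your treatment of the case $p\in(1,\infty)$ is correct and coincides with the paper's own proof: the authors also set $\Phi(x):=x^{p}/p$, identify $W^{1,\Phi}(\real^{d})=W^{1,p}(\real^{d})$ via Example \ref{Ex_Orlicz_0} (i), apply Theorem \ref{main_1}, and in the bounded-density sub-case choose $\lambda:=\e[|X-\widehat{X}|^{q}]^{-\frac{1}{1+q/p-q/r}}$, which is exactly the minimizer your calculus produces; the $L^{p^{*}}$ sub-case is read off from the second line of \eqref{main_1_1} because the complementary function is $\Psi(x)=x^{p^{*}}/p^{*}$ and $L^{\Psi}(\real^{d})=L^{p^{*}}(\real^{d})$.

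The gap is in your $p=1$ branch. Theorem \ref{main_0} with inner parameter equal to $q$ gives the exponent $\frac{1-q/r}{q+1}$ on $\e[|X-\widehat{X}|^{q}]$, while the Corollary claims $\frac{1-q/r}{q+1-q/r}$; for $r<\infty$ these differ, and in the only nontrivial regime $\e[|X-\widehat{X}|^{q}]<1$ the smaller exponent is the \emph{weaker} bound, so no ``extension to the full range'' can upgrade the former to the latter (when $\e[|X-\widehat{X}|^{q}]\geq 1$ the claim is trivial from the $L^{r}$ bound, as in Remark \ref{rem:2.12} (iv), so nothing is gained there). The Jensen reduction for $q\in(0,1)$ fails for the same reason: it replaces $\e[|X-\widehat{X}|^{q}]$ by $\e[|X-\widehat{X}|]^{q}$, and $\e[|X-\widehat{X}|]\geq\e[|X-\widehat{X}|^{q}]^{1/q}$ goes the wrong way, so one again lands on a strictly smaller exponent; moreover Theorem \ref{main_0} is only stated for $q\in[1,r)$. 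The correct way to settle $p=1$ is not to quote Theorem \ref{main_0} as a black box but to rerun the second half of the proof of Theorem \ref{main_1}: split on $\Omega(Df,\lambda):=\{M(Df)(X)>\lambda\}\cup\{M(Df)(\widehat{X})>\lambda\}$, bound $\p(\Omega(Df,\lambda))\leq A_{1}\{\|p_{X}\|_{\infty}+\|p_{\widehat{X}}\|_{\infty}\}\||Df|\|_{L^{1}(\real^{d})}\lambda^{-1}$ by the weak type estimate of Lemma \ref{Lem_key_2} (i) (which replaces the strong type estimate unavailable at $p=1$), estimate the contribution of $\Omega(Df,\lambda)$ by H\"older as in \eqref{eq:25}, the complement by the pointwise estimate of Lemma \ref{Lem_key_0} as in \eqref{eq:23}, and optimize $\lambda:=\e[|X-\widehat{X}|^{q}]^{-\frac{1}{q+1-q/r}}$; this yields the claimed exponent $\frac{1-q/r}{q+1-q/r}$ for every $q\in(0,r)$, including $q<1$. (In fairness, the paper's displayed proof also tacitly assumes $p>1$, since $x^{p}/p$ with $p=1$ is not an N-function and its complementary function is not doubling; the argument just sketched is the missing piece in either case.)
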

\begin{proof}
		It is sufficient to consider $p_{X}, p_{\widehat{X}} \in L^{\infty}(\real^{d})$.
		Let $\Phi(x):=x^{p}/p$, $x \in [0,\infty)$.
		Then the Orlicz--Sobolev space $W^{1,\Phi}(\real^{d})$ coincides with the classical Sobolev space $W^{1,p}(\real^{d})$ (see, Example \ref{Ex_Orlicz_0} (i)).
		Since $\Phi^{-1}(x)=p^{1/p} x^{1/p}$, the infimum of the right hand side of \eqref{main_1_1} is bounded from above by
		\begin{align*}
			\lambda^{-(1-\frac{q}{r})}
			+
			(p \lambda)^{q/p}
			{\mathbb E}
			\left[
				\left|
					X-\widehat{X}
				\right|^{q}
			\right]
		\end{align*}
		for any $\lambda >0$.
		By choosing $\lambda:=\e[|X-\widehat{X}|^{q}]^{-\frac{1}{1+q/p-q/r}}$, we conclude the statement.
\end{proof}

\begin{Rem}
	Let $f \in W^{1,p}(\real^{d})$ with $d<p<\infty$.
	Then by using Morrey's inequality (see, Theorem 4.10 in \cite{EvGa92}) and Lebesgue differentiation theorem (see, Theorem 1.32 in \cite{EvGa92}) there exists a $(1-d/p)$-H\"older continuous function $f^{*}$ such that $f=f^{*}$,  $\mathrm{Leb}$-a.e.
	Hence from Jensen's inequality, we have
	\begin{align*}
		\e\left[
			\left|
				f(X)
				-
				f(\widehat{X})
			\right|^{q}
		\right]
		\leq
		\|f^{*}\|_{1-d/p}^{q}
		\e\left[
			\left|
				X
				-
				\widehat{X}
			\right|^{q}
		\right]^{1-\frac{d}{p}},
	\end{align*}
	where $\|f^{*}\|_{\alpha}:=\sup_{x \neq y}
	\frac{|f^{*}(x)-f^{*}(y)|}{|x-y|^{\alpha}}$ for $\alpha \in (0,1]$.
	On the other hand, if $\e[|X-\widehat{X}|^{q}]<1$ and $q \in (0,d/(p-d))$ i.e., $1-d/p<1/(q+1)$, then Corollary 2.19 with $p_{X} \in L^{p^{*}}(\real^{d})$ and $r= \infty$ is sharper than the above.
\end{Rem}

\begin{proof}[Proof of Theorem \ref{main_1}]
We first assume that $p_{X}, p_{\widehat{X}} \in L^{\infty}(\real^{d})$.
Since $|Df| \in L^{\Phi}({\mathbb R}^{d})$, there exists $\alpha>0$ such that $\|\Phi(\alpha|Df|)\|_{L^{1}({\mathbb R}^{d})}<\infty$.
Then for $\lambda>0$, we define the event $\Omega(\Phi(\alpha|Df|),\lambda) \in {\mathscr F}$ by
\begin{align*}
	\Omega(\Phi(\alpha|Df|),\lambda)
	:=
	\left\{
		M(\Phi(\alpha|Df|))(X)>\lambda
	\right\}
	\cup
	\left\{
		M(\Phi(\alpha|Df|))(\widehat{X})>\lambda
	\right\}.
\end{align*}
Since $X$ and $\widehat{X}$ have bounded density functions, by using Lemma \ref{Lem_key_2} (i), we obtain
\begin{align*}
	\p(\Omega(\Phi(\alpha|Df|),\lambda))
	&\leq
	A_{1}
	\{
		\|p_{X}\|_{\infty}
		+
		\|p_{\widehat{X}}\|_{\infty}
	\}
	\|\Phi(\alpha|Df|)\|_{L^{1}({\mathbb R}^{d})}
	\lambda^{-1}.
\end{align*}
Hence by using H\"older's inequality with $\frac{1}{r/q}+\frac{1}{r/(r-q)}=1$ in the case of $r \in (1,\infty)$ and by using the boundedness of $f$ in the case of $r=\infty$, we have
\begin{align}\label{eq:14}
	&{\mathbb E}
	\left[
		\left|
		f(X)
		-
		f(\widehat{X})
	\right|^{q}
	{\bf 1}_{\Omega(\Phi(\alpha|Df|),\lambda)}
	\right] \notag \\ \notag
	&\leq
	\left(
		\|f\|_{L^{r}({\mathbb R}^{d},p_{X})}
		+
		\|f\|_{L^{r}({\mathbb R}^{d},p_{\widehat{X}})}
	\right)^{q}
	{\mathbb P}(\Omega(\Phi(\alpha|Df|),\lambda))^{1-\frac{q}{r}}\\
	&\leq
	\left(
		\|f\|_{L^{r}({\mathbb R}^{d},p_{X})}
		+
		\|f\|_{L^{r}({\mathbb R}^{d},p_{\widehat{X}})}
	\right)^{q}
	\left(
		A_{1}
		\{
			\|p_{X}\|_{\infty}
			+
			\|p_{\widehat{X}}\|_{\infty}
		\}
		\|\Phi(\alpha|Df|)\|_{L^{1}({\mathbb R}^{d})}
	\right)^{1-\frac{q}{r}}
	\lambda^{-(1-\frac{q}{r})}.
\end{align}
Let $N \in {\mathscr B}({\mathbb R}^{d})$ be the Lebesgue null set defined on Lemma \ref{Lem_key_0}.
On the event $\Omega(\Phi(\alpha|Df|),\lambda)^{{\rm c}}$, since $X, \widehat{X}$ have density functions and $\Phi^{-1}$ is non-decreasing, by similar way as   \eqref{eq:21} in Remark \ref{rem:2.13} (ii), we obtain
\begin{align}\label{eq:15}
	&{\mathbb E}
	\left[
		\left|
			f(X)
			-
			f(\widehat{X})
		\right|^{q}
		{\bf 1}_{\Omega(\Phi(\alpha|Df|),\lambda)^{{\rm c}}}
	\right] \notag \\
	&=
	{\mathbb E}
	\left[
		\left|
			f(X)
			-
			f(\widehat{X})
		\right|^{q}
		{\bf 1}_{\Omega(\Phi(\alpha|Df|),\lambda)^{{\rm c}}}
		\1_{\real^{d} \setminus N}(X)
		\1_{\real^{d} \setminus N}(\widehat{X})
	\right]\notag\\
	&\leq
	\left(\frac{K_{0}}{\alpha}\right)^{q}
	{\mathbb E}
	\left[
		\left|
			X
			-
			\widehat{X}
		\right|^{q}
		\left\{
			\Phi^{-1}(M(\Phi(\alpha|Df|))(X))
			+
			\Phi^{-1}(M(\Phi(\alpha|Df|))(\widehat{X}))
		\right\}^{q}
		{\bf 1}_{\Omega(\Phi(\alpha|Df|),\lambda)^{{\rm c}}}
	\right]\notag\\
	&\leq
	\left(\frac{2K_{0}}{\alpha}\right)^{q}
	(\Phi^{-1}(\lambda))^{q}
	{\mathbb E}
	\left[
	\left|
	X
	-
	\widehat{X}
	\right|^{q}
	\right].
\end{align}
Hence, by \eqref{eq:14} and \eqref{eq:15}, we have
\begin{align*}
	{\mathbb E}
	\left[
		\left|
			f(X)
			-
			f(\widehat{X})
		\right|^{q}
	\right]
	\leq
	C_{W^{1,\Phi}}(q,r,\infty)
	\left(
		\lambda^{-1+\frac{q}{r}}
		+
		(\Phi^{-1}(\lambda))^{q}
		{\mathbb E}
		\left[
			\left|
				X
				-
				\widehat{X}
			\right|^{q}
		\right]
	\right),
\end{align*}
which concludes the statement for $p_{X} \in L^{\infty}(\real^{d})$.

Now we suppose $p_{X}, p_{\widehat{X}} \in L^{\Psi}(\real^{d})$.
For $\lambda>0$, we define the event $\Omega(Df,\lambda) \in \mathscr{F}$ by
\begin{align*}
	\Omega(Df,\lambda)
	:=
	\left\{
		M(Df)(X)>\lambda
	\right\}
	\cup
	\left\{
		M(Df)(\widehat{X})>\lambda
	\right\}.
\end{align*}
Since $\Psi$ satisfies the $\Delta_{2}$-condition, it follows from Lemma \ref{Lem_key_3} that
\begin{align*}
	\|M(Df)\|_{L^{\Phi}({\mathbb R}^{d})}
	\leq
	A_{\Phi}
	\||Df|\|_{L^{\Phi}({\mathbb R}^{d})}.
\end{align*}
Hence by using the Markov inequality and the generalized H\"older's inequality \eqref{eq_GHolder}, we obtain
\begin{align*}
	{\mathbb P}(\Omega(Df,\lambda))
	&\leq
	\int_{{\mathbb R}^{d}}
		M(Df)(x)
		\{
			p_{X}(x)
			+
			p_{\widehat{X}}(x)
		\}
	\rd x
	\lambda^{-1}\\
	&\leq
	2
	\|M(Df)\|_{L^{\Phi}({\mathbb R}^{d})}
	\{
		\|p_{X}\|_{L^{\Psi}({\mathbb R}^{d})}
		+
		\|p_{\widehat{X}}\|_{L^{\Psi}({\mathbb R}^{d})}
	\}
	\lambda^{-1}\\
	&\leq
	2A_{\Phi}
	\||Df|\|_{L^{\Phi}({\mathbb R}^{d})}
	\{
		\|p_{X}\|_{L^{\Psi}({\mathbb R}^{d})}
		+
		\|p_{\widehat{X}}\|_{L^{\Psi}({\mathbb R}^{d})}
	\}
	\lambda^{-1}.
\end{align*}
Hence by using H\"older's inequality with $\frac{1}{r/q}+\frac{1}{r/(r-q)}=1$ in the case of $r \in (1,\infty)$ and by using the boundedness of $f$ in the case of $r=\infty$, we have
\begin{align}\label{eq:25}
	&{\mathbb E}
	\left[
		\left|
			f(X)
			-
			f(\widehat{X})
		\right|^{q}
		{\bf 1}_{\Omega(Df,\lambda)}
	\right] \notag \\
	&\leq
	\left(
		\|f\|_{L^{r}({\mathbb R}^{d},p_{X})}
		+
		\|f\|_{L^{r}({\mathbb R}^{d},p_{\widehat{X}})}
	\right)^{q}
	{\mathbb P}(\Omega(Df,\lambda))^{1-\frac{q}{r}}, \notag \\
	&\leq 
	\left(\|f\|_{L^{r}({\mathbb R}^{d},p_{X})}+\|f\|_{L^{r}({\mathbb R}^{d},p_{\widehat{X}})}\right)^{q}
	\left(
		2
		A_{\Phi}
		\{
			\|p_{X}\|_{L^{\Psi}({\mathbb R}^{d})}
			+
			\|p_{\widehat{X}}\|_{L^{\Psi}({\mathbb R}^{d})}
		\}
		\||Df|\|_{L^{\Phi}({\mathbb R}^{d})}
	\right)^{1-\frac{q}{r}}
	\lambda^{-(1-\frac{q}{r})}.
\end{align}
On the event $\Omega(Df,\lambda)^{{\rm c}}$, since $X$ and $\widehat{X}$ have density functions, by Lemma \ref{Lem_key_0} and Remark \ref{Rem_Orlicz_1} (iv), we obtain
\begin{align}\label{eq:23}
	{\mathbb E}
	\left[
		\left|
			f(X)
			-
			f(\widehat{X})
		\right|^{q}
		{\bf 1}_{\Omega(Df,\lambda)^{{\rm c}}}
	\right]
	&=
	{\mathbb E}
	\left[
		\left|
			f(X)
			-
			f(\widehat{X})
		\right|^{q}
		{\bf 1}_{\Omega(Df,\lambda)^{{\rm c}}}
		\1_{\real^{d} \setminus N}(X)
		\1_{\real^{d} \setminus N}(\widehat{X})
	\right]
	\notag\\
	&\leq
	K_{0}^{q}
	{\mathbb E}
	\left[
		\left|
			X
			-
			\widehat{X}
		\right|^{q}
		\{
			M(Df)(X)
			+
			M(Df)(\widehat{X})
		\}^{q}
		{\bf 1}_{\Omega(Df,\lambda)^{{\rm c}}}
	\right]\notag\\
	&\leq
	(2K_{0})^{q}
	\lambda^{q}
	{\mathbb E}
	\left[
		\left|
			X
			-
			\widehat{X}
		\right|^{q}
	\right].
\end{align}
We choose $\lambda:=\e[|X-\widehat{X}|^{q}]^{-\frac{1}{q+1}}$ in the case of $r=\infty$ and $\lambda:=\e[|X-\widehat{X}|^{q}]^{-\frac{1}{q+1-q/r}}$ in the case of $r \in (1,\infty)$, and then we conclude the statement from \eqref{eq:25} and \eqref{eq:23}.
\end{proof}

We next consider the case of the Sobolev space with a variable exponent.

\begin{Thm}\label{main_2}
	Let $p \in {\mathcal P}^{\log}({\mathbb R}^{d})$ with $1<p^{-}$ and $p^{*}(\cdot):=p(\cdot)/(p(\cdot)-1)$.
	Let $X, \widehat{X}:\Omega \to \real^{d}$ be random variables which admit density functions $p_{X}$ and $p_{\widehat{X}}$ with respect to Lebesgue measure, respectively, and let $r \in (1,\infty]$.
	Suppose that $p_{X}, p_{\widehat{X}} \in L^{p^{*}(\cdot)}({\mathbb R}^{d})$.
	Then for any $f \in W^{1,p(\cdot)}(\real^{d}) \cap L^{r}(\real^{d},p_{X}) \cap L^{r}(\real^{d},p_{\widehat{X}})$ and $q \in (0,r)$, it holds that
	\begin{align*}
		{\mathbb E}
		\left[
			\left|
				f(X)
				-
				f(\widehat{X})
			\right|^{q}
		\right]
		&\leq
		C_{W^{1,p(\cdot)}}(q,r,p^{*}(\cdot))
		{\mathbb E}
		\left[
			\left|
				X
				-
				\widehat{X}
			\right|^{q}
		\right]^{\frac{1-q/r}{q+1-q/r}},
	\end{align*}
	where the constant $C_{W^{1,p(\cdot)}}(q,r,p^{*}(\cdot))$ is defined by
\begin{align*}	
	&C_{W^{1,p(\cdot)}}(q,r,p^{*}(\cdot))
	\\&:=
	(2K_{0})^{q}
	+
	\left(
		\|f\|_{L^{r}(\real^{d},p_{X})}
		+
		\|f\|_{L^{r}(\real^{d},p_{\widehat{X}})}
	\right)^{q}
	\left(
		2A_{\Phi}
		\{
			\|p_{X}\|_{L^{p^{*}(\cdot)}({\mathbb R}^{d})}
			+
			\|p_{\widehat{X}}\|_{L^{p^{*}(\cdot)}({\mathbb R}^{d})}
		\}
		\| |Df| \|_{L^{p(\cdot)}({\mathbb R}^{d})}
	\right)^{1-\frac{q}{r}}.
\end{align*}
Here, $K_{0}$ and $A_{p(\cdot)}$ are the constants of the pointwise estimate \eqref{Lem_key_0_1} in Lemma \ref{Lem_key_0} and of the Hardy--Littlewood maximal strong estimate in Lemma \ref{lem:0.3}, respectively.
\end{Thm}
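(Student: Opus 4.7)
The plan is to mirror the proof of Theorem \ref{main_1} in the $L^{\Psi}$ case, replacing the Orlicz-type Hölder inequality and maximal estimate with their variable-exponent analogues. Since $p \in \mathcal{P}^{\log}({\mathbb R}^{d})$ with $1<p^{-}$, the inclusion chain in Remark \ref{Rem_Sob_expo} (iv) guarantees $W^{1,p(\cdot)}({\mathbb R}^{d}) \subset W^{1,1}_{\mathrm{loc}}({\mathbb R}^{d}) \subset BV_{\mathrm{loc}}({\mathbb R}^{d})$, so the pointwise estimate \eqref{Lem_key_0_1} in Lemma \ref{Lem_key_0} is available for $f$. Also, by Remark \ref{Rem_Sob_expo} (ii), $p^{*} \in \mathcal{P}^{\log}({\mathbb R}^{d})$, which is consistent with the assumption $p_{X},p_{\widehat{X}} \in L^{p^{*}(\cdot)}({\mathbb R}^{d})$.

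First, for $\lambda>0$ I introduce the exceptional event
\begin{align*}
	\Omega(Df,\lambda)
	:=
	\{M(Df)(X)>\lambda\}
	\cup
	\{M(Df)(\widehat{X})>\lambda\}.
\end{align*}
The key probabilistic estimate comes from the Markov inequality together with the generalized Hölder inequality \eqref{eq_GHolder_1} with exponents $p(\cdot)$ and $p^{*}(\cdot)$, followed by the variable-exponent maximal estimate in Lemma \ref{lem:0.3}, yielding
\begin{align*}
	{\mathbb P}(\Omega(Df,\lambda))
	&\leq
	\lambda^{-1}
	\int_{{\mathbb R}^{d}}
		M(Df)(x)\{p_{X}(x)+p_{\widehat{X}}(x)\}
	\rd x\\
	&\leq
	2\lambda^{-1}
	\|M(Df)\|_{L^{p(\cdot)}({\mathbb R}^{d})}
	\{\|p_{X}\|_{L^{p^{*}(\cdot)}({\mathbb R}^{d})}+\|p_{\widehat{X}}\|_{L^{p^{*}(\cdot)}({\mathbb R}^{d})}\}\\
	&\leq
	2A_{p(\cdot)}\lambda^{-1}
	\||Df|\|_{L^{p(\cdot)}({\mathbb R}^{d})}
	\{\|p_{X}\|_{L^{p^{*}(\cdot)}({\mathbb R}^{d})}+\|p_{\widehat{X}}\|_{L^{p^{*}(\cdot)}({\mathbb R}^{d})}\}.
\end{align*}

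Next I split $\e[|f(X)-f(\widehat{X})|^{q}]$ according to $\Omega(Df,\lambda)$. On the bad set I apply Hölder's inequality with conjugate exponents $r/q$ and $r/(r-q)$ (or just the $L^{\infty}$ bound if $r=\infty$) to control the integrand by $(\|f\|_{L^{r}(\real^{d},p_{X})}+\|f\|_{L^{r}(\real^{d},p_{\widehat{X}})})^{q}{\mathbb P}(\Omega(Df,\lambda))^{1-q/r}$. On the good set $\Omega(Df,\lambda)^{\mathrm{c}}$, the pointwise estimate \eqref{Lem_key_0_1} applied $\mathrm{Leb}$-a.e. (valid since $X,\widehat{X}$ possess densities, so the Lebesgue null set $N$ is charged with probability zero) bounds the integrand by $(2K_{0})^{q}\lambda^{q}\e[|X-\widehat{X}|^{q}]$.

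Combining these two contributions gives
\begin{align*}
	{\mathbb E}[|f(X)-f(\widehat{X})|^{q}]
	\leq
	(2K_{0})^{q}\lambda^{q}{\mathbb E}[|X-\widehat{X}|^{q}]
	+C_{r,f,p_{X},p_{\widehat{X}}}\,\lambda^{-(1-q/r)},
\end{align*}
where $C_{r,f,p_{X},p_{\widehat{X}}}$ absorbs the $L^{r}$ norms of $f$ and the $L^{p^{*}(\cdot)}$ norms of the densities. Optimizing in $\lambda$ by taking $\lambda:=\e[|X-\widehat{X}|^{q}]^{-1/(q+1-q/r)}$ balances the two terms and produces the rate $\e[|X-\widehat{X}|^{q}]^{(1-q/r)/(q+1-q/r)}$ with the constant $C_{W^{1,p(\cdot)}}(q,r,p^{*}(\cdot))$ as stated.

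The main obstacle is essentially bookkeeping: verifying that the variable-exponent Hölder inequality pairs $L^{p(\cdot)}$ with $L^{p^{*}(\cdot)}$ correctly under the convention $1/p(x)+1/p^{*}(x)=1$ (so Lemma 3.2.20 of \cite{DiHaHaRu11} applies with $s\equiv 1$), and that the hypothesis $1<p^{-}$ on $p \in \mathcal{P}^{\log}({\mathbb R}^{d})$ is exactly what is needed to invoke Lemma \ref{lem:0.3}. Once those ingredients are in place, the argument is a direct transcription of the $L^{\Psi}$ case of Theorem \ref{main_1}.
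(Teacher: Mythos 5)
Your proposal is correct and follows exactly the route the paper intends: the paper's own proof of Theorem \ref{main_2} is just a remark that one repeats the $L^{\Psi}$-case of Theorem \ref{main_1}, using $W^{1,p(\cdot)}({\mathbb R}^{d}) \subset BV_{\mathrm{loc}}({\mathbb R}^{d})$ for the pointwise estimate, the generalized H\"older inequality \eqref{eq_GHolder_1} with $s \equiv 1$, and the maximal estimate of Lemma \ref{lem:0.3}, followed by the same choice $\lambda = \e[|X-\widehat{X}|^{q}]^{-1/(q+1-q/r)}$. Your write-up supplies precisely the details the paper omits (and your constant $2A_{p(\cdot)}$ is the intended one; the $A_{\Phi}$ in the stated constant is evidently a typographical carry-over from Theorem \ref{main_1}).
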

\begin{proof}
We can use the Hardy--Littlewood maximal strong type estimate in Lemma \ref{lem:0.3} since $p \in \mathcal{P}^{\log}(\real^{d})$.
Moreover, it holds that $W^{1,p(\cdot)}({\mathbb R}^{d}) \subset BV_{{\rm loc}}({\mathbb R}^{d})$ (see, Remark \ref{Rem_Orlicz_1} (iv) and Remark \ref{Rem_Sob_expo} (iv)).
Therefore, by the same way as the proof of Theorem \ref{main_1}, we can prove the statement by using the generalized H\"older's inequality \eqref{eq_GHolder_1} for $M(Df) \in L^{p(\cdot)}({\mathbb R}^{d})$ and $p_{X}, p_{\widehat{X}} \in L^{p^{*}(\cdot)}({\mathbb R}^{d})$, and thus it will be omitted.
\end{proof}

\begin{Rem}
For the Sobolev space with a variable exponent $p$, it is difficult to obtain Avikainen's estimate in the case of $p_{X}, p_{\widehat{X}} \in L^{\infty}({\mathbb R}^{d})$.
The reason is that since the variable exponent $p$ is not constant, we cannot use Jensen's inequality in the same way as the estimate \eqref{eq:15}.
\end{Rem}

\subsubsection*{Fractional Sobolev spaces}

We finally consider Avikainen's estimates for fractional Sobolev spaces.

\begin{Thm}\label{main_3}
	Let $s \in (0,1)$, $p \in [1,\infty)$ and $p^{*}:=p/(p-1)$.
	Let $X, \widehat{X}:\Omega \to \real^{d}$ be random variables which admit the density functions $p_{X}$ and $p_{\widehat{X}}$ with respect to Lebesgue measure, respectively, and let $r \in (1,\infty]$.
	Suppose that $p_{X}, p_{\widehat{X}} \in  L^{\infty}(\real^{d})$ or $p_{X}, p_{\widehat{X}} \in L^{p^{*}}({\mathbb R}^{d})$.
	Then for any $f \in W^{s,p}(\real^{d}) \cap L^{r}(\real^{d},p_{X}) \cap L^{r}(\real^{d},p_{\widehat{X}})$ and $q \in (0,r)$, it holds that
	\begin{align*}
		{\mathbb E}
		\left[
			\left|
				f(X)
				-
				f(\widehat{X})
			\right|^{q}
		\right]
		\leq
		\left\{ \begin{array}{ll}
		\displaystyle
			C_{W^{s,p}}(q,r,\infty)
			{\mathbb E}
			\left[
				\left|
					X
					-
					\widehat{X}
				\right|^{qs}
			\right]^{\frac{p(1-q/r)}{q+p(1-q/r)}},
			&\text{if} \quad p_{X}, p_{\widehat{X}} \in L^{\infty}(\real^{d}), \\
		\displaystyle 
			C_{W^{s,p}}(q,r,p^{*})
			{\mathbb E}
			\left[
				\left|
					X
					-
					\widehat{X}
				\right|^{qs}
			\right]^{\frac{1-q/r}{q+1-q/r}},
			&\text{if} \quad p_{X}, p_{\widehat{X}} \in L^{p^{*}}(\real^{d}),
		\end{array}\right.
	\end{align*}
	where
	\begin{align*}
		&C_{W^{s,p}}(q,r,\infty)
		\\&:=
		(2K_{0}(s,p))^{q}
		+
		\left(
			\|f\|_{L^{r}(\real^{d},p_{X})}
			+
			\|f\|_{L^{r}(\real^{d},p_{\widehat{X}})}
		\right)^{q}
		\left(
			A_{1}
			\{
				\|p_{X}\|_{\infty}
				+
				\|p_{\widehat{X}}\|_{\infty}
			\}
			\||G_{s,p}f|^{p}\|_{L^{1}({\mathbb R}^{d})}
		\right)^{1-\frac{q}{r}}, \\
		&C_{W^{s,p}}(q,r,p^{*})
		\\&:=
		(2K_{0}(s,p))^{q}
		+
		\left(
			\|f\|_{L^{r}(\real^{d},p_{X})}
			+
			\|f\|_{L^{r}(\real^{d},p_{\widehat{X}})}
		\right)^{q}
		\left(
			A_{p}
			\{
				\|p_{X}\|_{L^{p^{*}}(\real^{d})}
				+
				\|p_{\widehat{X}}\|_{L^{p^{*}}(\real^{d})}
			\}
			\|G_{s,p}f\|_{L^{p}({\mathbb R}^{d})}
		\right)^{1-\frac{q}{r}}.
	\end{align*}
Here, $K_{0}(s,p)$, $A_{1}$ and $A_{p}$ are the constants of pontwise estimate \eqref{eq:28} in Lemma \ref{lem:0.5} and of the Hardy--Littlewood maximal weak and strong type estimates in Lemma \ref{Lem_key_2} (i) and (ii), respectively.
\end{Thm}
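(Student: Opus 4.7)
The plan is to transplant the proof strategy of Theorem \ref{main_1} to the fractional setting, with the pointwise estimate from Lemma \ref{lem:0.5} (involving $|x-y|^{s}$ and the Hardy--Littlewood maximal function of $G_{s,p}f$) playing the role previously played by Lemma \ref{Lem_key_0}. In both cases, for a parameter $\lambda>0$ to be optimized, I split
\[
{\mathbb E}\bigl[|f(X)-f(\widehat{X})|^{q}\bigr]
={\mathbb E}\bigl[|f(X)-f(\widehat{X})|^{q}{\bf 1}_{\Omega(\lambda)^{\rm c}}\bigr]
+{\mathbb E}\bigl[|f(X)-f(\widehat{X})|^{q}{\bf 1}_{\Omega(\lambda)}\bigr],
\]
where $\Omega(\lambda)$ is a maximal-function level set determined by $X$ and $\widehat{X}$. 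On $\Omega(\lambda)^{\rm c}$ I use Lemma \ref{lem:0.5} to get a deterministic pointwise bound. On $\Omega(\lambda)$ I use H\"older's inequality with conjugate exponents $r/q$ and $r/(r-q)$ (or the boundedness of $f$ when $r=\infty$), reducing matters to bounding ${\mathbb P}(\Omega(\lambda))^{1-q/r}$ in terms of $\lambda^{-1}$.

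For the case $p_{X}, p_{\widehat{X}}\in L^{\infty}(\real^{d})$, I set
\[
\Omega(\lambda):=\{M(|G_{s,p}f|^{p})(X)>\lambda\}\cup\{M(|G_{s,p}f|^{p})(\widehat{X})>\lambda\}.
\]
Since $M(G_{s,p}f)\leq (M|G_{s,p}f|^{p})^{1/p}$ by Jensen's inequality (for $p\geq 1$ on the unsigned function $G_{s,p}f$), Lemma \ref{lem:0.5} yields $|f(X)-f(\widehat{X})|^{q}\leq (2K_{0}(s,p))^{q}|X-\widehat{X}|^{qs}\lambda^{q/p}$ on $\Omega(\lambda)^{\rm c}$, while the weak $(1,1)$ estimate of Lemma \ref{Lem_key_2}(i) applied to the $L^{1}$ function $|G_{s,p}f|^{p}$ gives ${\mathbb P}(\Omega(\lambda))\leq A_{1}\{\|p_{X}\|_{\infty}+\|p_{\widehat{X}}\|_{\infty}\}\||G_{s,p}f|^{p}\|_{L^{1}(\real^{d})}\lambda^{-1}$. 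Balancing the two contributions via $\lambda^{q/p}{\mathbb E}[|X-\widehat{X}|^{qs}]\asymp\lambda^{-(1-q/r)}$, i.e.\ choosing $\lambda={\mathbb E}[|X-\widehat{X}|^{qs}]^{-p/(q+p(1-q/r))}$, delivers the advertised exponent $\frac{p(1-q/r)}{q+p(1-q/r)}$.

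For the case $p_{X}, p_{\widehat{X}}\in L^{p^{*}}(\real^{d})$, I instead take
\[
\Omega(\lambda):=\{M(G_{s,p}f)(X)>\lambda\}\cup\{M(G_{s,p}f)(\widehat{X})>\lambda\},
\]
so that Lemma \ref{lem:0.5} yields the stronger bound $|f(X)-f(\widehat{X})|^{q}\leq (2K_{0}(s,p))^{q}|X-\widehat{X}|^{qs}\lambda^{q}$ on $\Omega(\lambda)^{\rm c}$, while Markov's inequality, $L^{p}$--$L^{p^{*}}$ H\"older, and the strong $L^{p}$ estimate of Lemma \ref{Lem_key_2}(ii) combine to give ${\mathbb P}(\Omega(\lambda))\leq A_{p}\|G_{s,p}f\|_{L^{p}(\real^{d})}\{\|p_{X}\|_{L^{p^{*}}(\real^{d})}+\|p_{\widehat{X}}\|_{L^{p^{*}}(\real^{d})}\}\lambda^{-1}$. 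Balancing $\lambda^{q}{\mathbb E}[|X-\widehat{X}|^{qs}]$ against $\lambda^{-(1-q/r)}$ with $\lambda={\mathbb E}[|X-\widehat{X}|^{qs}]^{-1/(q+1-q/r)}$ produces the exponent $\frac{1-q/r}{q+1-q/r}$. The only non-routine ingredient is the pointwise fractional estimate of Lemma \ref{lem:0.5}, which is already available; the rest is the same template as Theorems \ref{main_0}--\ref{main_2}, with the main bookkeeping subtlety being the Jensen step needed in case 1 to reconcile the natural $L^{1}$-object $|G_{s,p}f|^{p}$ with a pointwise estimate phrased in terms of $M(G_{s,p}f)$.
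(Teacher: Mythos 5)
Your proposal is correct and coincides with the paper's own proof: the same two level-set decompositions (with $M(|G_{s,p}f|^{p})$ in the bounded-density case and $M(G_{s,p}f)$ in the $L^{p^{*}}$ case), the same use of the weak type estimate, Markov--H\"older with the strong type estimate, the Jensen step $M(G_{s,p}f)\leq M(|G_{s,p}f|^{p})^{1/p}$, and the same choices of $\lambda$. No substantive difference from the argument given in the paper.
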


Before proving Theorem \ref{main_3}, we give a pointwise estimate for functions in $W^{s,p}(\real^{d})$, which plays a crucial role in our argument.

\begin{Lem}\label{lem:0.5}
	Let $s \in (0,1)$, $p \in [1,\infty)$ and $f \in W^{s,p}({\mathbb R}^{d})$.
	Then there exist a constant $K_{0}(s,p)>0$ and a Lebesgue null set $N \in {\mathscr B}({\mathbb R}^{d})$ such that for all $x,y \in {\mathbb R}^{d} \setminus N$,
	\begin{align}
	\label{eq:28}
		\left|
			f(x)
			-
			f(y)
		\right|
		\leq
		K_{0}(s,p)
		|x-y|^{s}
		\left\{
			M_{2|x-y|}(G_{s,p}f)(x)
			+
			M_{2|x-y|}(G_{s,p}f)(y)
		\right\}.
	\end{align}
\end{Lem}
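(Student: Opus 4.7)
The plan is to mirror the proof of Lemma \ref{Lem_key_0}, replacing the BV Poincar\'e inequality (estimate \eqref{Lem_key_0_2}) by a fractional Poincar\'e-type inequality adapted to $G_{s,p}f$. Concretely, I claim that there exists $C_{d,s,p}>0$ such that for every $x \in \real^{d}$ and $r>0$,
\begin{align}\label{eq_planPoincare}
	\dashint_{B(x;r)}
		\left|f(z)-(f)_{x,r}\right|
	\rd z
	\leq
	C_{d,s,p}\, r^{s}\, M_{r}(G_{s,p}f)(x),
\end{align}
where $(f)_{x,r}:=\dashint_{B(x;r)} f(z)\rd z$. Once \eqref{eq_planPoincare} is in hand, the dyadic telescoping argument from the proof of Lemma \ref{Lem_key_0} applies essentially verbatim, with $r$ and $M_{r}(Df)(x)$ replaced by $r^{s}$ and $M_{r}(G_{s,p}f)(x)$; the geometric series $\sum_{i\geq 0} 2^{-is}$ now converges thanks to $s>0$, delivering the estimate \eqref{eq:28}.

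The core step (and the only real obstacle) is \eqref{eq_planPoincare}. I would first write $|f(z)-(f)_{x,r}|\leq \dashint_{B(x;r)}|f(z)-f(w)|\rd w$ and then estimate the inner integral by H\"older's inequality with the split
\begin{align*}
	|f(z)-f(w)|
	=
	\frac{|f(z)-f(w)|}{|z-w|^{d/p+s}} \cdot |z-w|^{d/p+s},
\end{align*}
applied with exponents $p$ and $p^{*}=p/(p-1)$ (handling $p=1$ by omitting the trivial factor). This yields
\begin{align*}
	\int_{B(x;r)}|f(z)-f(w)|\rd w
	\leq
	\left(
		\int_{B(x;r)}\frac{|f(z)-f(w)|^{p}}{|z-w|^{d+sp}}\rd w
	\right)^{1/p}
	\left(
		\int_{B(x;r)}|z-w|^{(d+sp)/(p-1)}\rd w
	\right)^{(p-1)/p}.
\end{align*}
Since $B(x;r)\subset B(z;2r)$ whenever $z\in B(x;r)$, the first factor is bounded by $G_{s,p}f(z)$, and a direct polar-coordinate computation of the second factor gives a bound of order $r^{d+s-d/p}$. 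Dividing by $\mathrm{Leb}(B(x;r))\sim r^{d}$ produces
\begin{align*}
	\dashint_{B(x;r)}|f(z)-f(w)|\rd w
	\leq
	C\, r^{s}\, G_{s,p}f(z),
\end{align*}
and averaging in $z$ over $B(x;r)$ yields \eqref{eq_planPoincare}.

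With \eqref{eq_planPoincare} established, fix a Lebesgue null set $N$ outside of which $(f)_{x,r}\to f(x)$ as $r\to 0$ (Lebesgue differentiation). For $x,y\in\real^{d}\setminus N$, set $r_{i}:=2^{-i}|x-y|$. Telescoping $f(x)-(f)_{x,r_{0}}=\sum_{i\geq 0}\{(f)_{x,r_{i+1}}-(f)_{x,r_{i}}\}$ and bounding each increment by $2^{d}\dashint_{B(x;r_{i})}|f(z)-(f)_{x,r_{i}}|\rd z$, then applying \eqref{eq_planPoincare} and summing $\sum_{i\geq 0} 2^{-is}<\infty$, I obtain
\begin{align*}
	|f(x)-(f)_{x,r_{0}}|
	\leq
	C\, |x-y|^{s}\, M_{|x-y|}(G_{s,p}f)(x),
\end{align*}
and symmetrically for $y$. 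Finally, $|(f)_{x,r_{0}}-(f)_{y,r_{0}}|$ is controlled via the intermediate ball $B(x;2r_{0})\supset B(y;r_{0})$ exactly as in \eqref{Lem_key_0_6}, producing a term of the form $C|x-y|^{s}M_{2|x-y|}(G_{s,p}f)(x)$. Combining the three estimates yields \eqref{eq:28} with an explicit constant $K_{0}(s,p)$ depending only on $d$, $s$, and $p$.
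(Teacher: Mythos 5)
Your proposal is correct and follows essentially the same route as the paper: both establish the fractional Poincar\'e-type bound $\dashint_{B(x;r)}|f(z)-(f)_{x,r}|\,\rd z \leq C\, r^{s}\, M_{r}(G_{s,p}f)(x)$ (the paper via Jensen's inequality plus the crude bound $|z-y|\leq 2r$ on the kernel, you via the equivalent H\"older split with exponents $p$ and $p/(p-1)$) and then rerun the dyadic telescoping and cross-ball comparison of Lemma \ref{Lem_key_0} with $r$ replaced by $r^{s}$. One cosmetic slip: the second H\"older factor is of order $r^{d+s}$, not $r^{d+s-d/p}$, which is exactly what makes your final bound $\dashint_{B(x;r)}|f(z)-f(w)|\,\rd w\leq C\,r^{s}\,G_{s,p}f(z)$ after dividing by $\mathrm{Leb}(B(x;r))\sim r^{d}$ correct as stated.
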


\begin{Rem}
	%As a consequence of Lemma \ref{lem:0.5}, the following pointwise estimate holds:
	%\begin{align}\label{pw_esti_2}
	%	|f(x)-f(y)|
	%	&\leq
	%	K_{0}(s,p)
	%	|x-y|^{s}
	%	\left\{
	%		M_{2|x-y|}(G_{s,p}f)(x)
	%		+
	%		M_{2|x-y|}(G_{s,p}f)(y)
	%	\right\},
	%\text{$\mathrm{Leb}$-a.e. }x,y \in \real^{d}
	%\end{align}
	%for some $K_{1}(s,p)>0$.
	Note that Yang \cite{Ya03} introduced Haj\l{}asz--Sobolev space $W^{s,p}(X)$ on a metric measure space $X$ of homogeneous type by using the pointwise estimate similar to \eqref{eq:28} (see, Definition 1.4 in \cite{Ya03}).
\end{Rem}

\begin{proof}[Proof of Lemma \ref{lem:0.5}]
	The proof is similar to Lemma \ref{Lem_key_0}.
	By using Jensen's inequality, for any $x \in \real^{d}$ and $r>0$,
	\begin{align}\label{eq:0.1}
		\dashint_{B(x;r)}
			\left|
				f(z)
				-
				(f)_{x,r}
			\right|
		{\rm d}z
		&\leq
		\dashint_{B(x;r)}
			\left(
				\dashint_{B(x;r)}
					\left|
						f(z)
						-
						f(y)
					\right|^{p}
				{\rm d}y
			\right)^{1/p}
		{\rm d}z \notag\\
		&\leq
		(2r)^{(d+sp)/p}
		\dashint_{B(x;r)}
			\left(
				\dashint_{B(x;r)}
					\frac
					{|f(z)-f(y)|^{p}}
					{|z-y|^{d+sp}}
				{\rm d}y
			\right)^{1/p}
		{\rm d}z \notag\\
		&\leq
		C_{0}(s,p)
		r^{s}
		\dashint_{B(x;r)}
			G_{s,p}f(z)
		\rd z \notag\\
		&\leq
		C_{0}(s,p)
		r^{s}
		M_{r}(G_{s,p}f)(x),
	\end{align}
	where $C_{0}(s,p):=2^{(d+sp)/p}(\frac{\Gamma(d/2+1)}{\pi^{d/2}})^{1/p}$.
        Let $N \in {\mathscr B}({\mathbb R}^{d})$ be the Lebesgue null set defined on \eqref{Lem_key_0_3}.
	Then, by the same way as the proof of Lemma \ref{Lem_key_0}, for fixed $x, y \in \real^{d} \setminus N$ and $r_{i}:=2^{-i}|x-y|$, for $i \in \n \cup \{0\}$, we obtain
	\begin{align}\label{eq:0.2}
		\left|
			f(x)
			-
			(f)_{x,r_{0}}
		\right|
		&\leq
		2^{d}
		\sum_{i=0}^{\infty}
		\dashint_{B(x;r_{i})}
			\left|
				f(z)
				-
				(f)_{x,r_{i}}
			\right|
		{\rm d}z \notag\\
		&\leq 
		2^{d}
		C_{0}(s,p)
		M_{|x-y|}(G_{s,p}f)(x)
		\sum_{i=0}^{\infty}
		r_{i}^{s}
		\notag\\
		&=
		\frac{2^{s+d}}{2^{s}-1}
		C_{0}(s,p)
		M_{|x-y|}(G_{s,p}f)(x)
		|x-y|^{s}.
	\end{align}
	By the same way,
	%since $B(y;r_{0}) \subset B(x;2r_{0})$,
	we have
	\begin{align}\label{eq:0.22}
		\left|
			f(y)
			-
			(f)_{y,r_{0}}
		\right|
		&\leq
		\frac{2^{s+d}}{2^{s}-1}
		C_{0}(s,p)
		M_{|x-y|}(G_{s,p}f)(y)
		|x-y|^{s}.
		%\notag\\
		%&\leq
		%\frac{2^{s+2d}}{2^{s}-1}
		%C_{0}(s,p)
		%M_{2|x-y|}(G_{s,p}f)(x)
		%|x-y|^{s}.
	\end{align}
	On the other hand, by the same way as proof of Lemma \ref{Lem_key_0}, it holds from \eqref{eq:0.1} that
	\begin{align}\label{eq:0.23}
		|(f)_{x,r_{0}}-(f)_{y,r_{0}}|
		&\leq
		2^{d+1}
		\dashint_{B(x;2r_{0})}
		|f(z)-(f)_{x,2r_{0}}|
		\rd z \notag\\
		&\leq
		2^{s+d+1}
		C_{0}(s,p)
		|x-y|^{s}
		M_{2|x-y|}(G_{s,p}f)(x).
	\end{align}
	By combining \eqref{eq:0.2}, \eqref{eq:0.22} and \eqref{eq:0.23}, we conclude the proof.
\end{proof}

\begin{proof}[Proof of Theorem \ref{main_3}]
	We first assume $p_{X},p_{\widehat{X}} \in L^{\infty}(\real^{d})$.
	For $\lambda>0$, we define the event $\Omega(|G_{s,p}f|^{p},\lambda) \in \mathscr{F}$ by
		\begin{align*}
		\Omega(|G_{s,p}f|^{p},\lambda)
		:=
		\left\{
			M(|G_{s,p}f|^{p})(X)
			>
			\lambda
		\right\}
		\cup
		\left\{
			M(|G_{s,p}f|^{p})(\widehat{X})
			>
			\lambda
		\right\}.
	\end{align*}
	Since $|G_{s,p}f|^{p} \in L^{1}({\mathbb R}^{d})$, by using Lemma \ref{Lem_key_2} (i), we obtain
	\begin{align*}
		\p(\Omega(|G_{s,p}f|^{p},\lambda))
		\leq
		A_{1}
		\{
			\|p_{X}\|_{\infty}
			+
			\|p_{\widehat{X}}\|_{\infty}
		\}
		\||G_{s,p}f|^{p}\|_{L^{1}(\real^{d})}
		\lambda^{-1}.
	\end{align*}
	Hence by using H\"older's inequality with $\frac{1}{r/q}+\frac{1}{r/(r-q)}=1$ in the case of $r \in (1,\infty)$ and by using the boundedness of $f$ in the case of $r=\infty$, we have
	\begin{align}\label{eq:18}
		&{\mathbb E}
		\left[
			\left|
				f(X)
				-
				f(\widehat{X})
			\right|^{q}
			{\bf 1}_{\Omega(|G_{s,p}|^{p},\lambda)}
		\right] \notag\\
		&\leq
	\left(
		\|f\|_{L^{r}(\real^{d},p_{X})}
		+
		\|f\|_{L^{r}(\real^{d},p_{\widehat{X}})}
	\right)^{q}
	{\mathbb P}(\Omega(|G_{s,p}f|^{p},\lambda))^{1-\frac{q}{r}} \notag \\
	&\leq
	\left(
		\|f\|_{L^{r}(\real^{d},p_{X})}
		+
		\|f\|_{L^{r}(\real^{d},p_{\widehat{X}})}
	\right)^{q}
	\left(
		A_{1}
		\{
			\|p_{X}\|_{\infty}
			+
			\|p_{\widehat{X}}\|_{\infty}
		\}
		\||G_{s,p}f|^{p}\|_{L^{1}({\mathbb R}^{d})}
	\right)^{1-\frac{q}{r}}
	\lambda^{-(1-\frac{q}{r})}.
\end{align}
Let $N \in {\mathscr B}({\mathbb R}^{d})$ be the Lebesgue null set defined on Lemma \ref{lem:0.5}.
On the event $\Omega(|G_{s,p}f|^{p},\lambda)^{{\rm c}}$, since $X$ and $\widehat{X}$ have density functions, by Lemma \ref{lem:0.5} and using Jensen's inequality, we obtain
\begin{align}
	\label{eq:35}
	&{\mathbb E}
	\left[
		\left|
			f(X)
			-
			f(\widehat{X})
		\right|^{q}
		{\bf 1}_{\Omega(|G_{s,p}f|^{p},\lambda)^{{\rm c}}}
	\right] \notag 
	=
	{\mathbb E}
	\left[
		\left|
			f(X)
			-
			f(\widehat{X})
		\right|^{q}
		{\bf 1}_{\Omega(|G_{s,p}f|^{p},\lambda)^{{\rm c}}}
		{\bf 1}_{\real^{d}\setminus N}(X){\bf 1}_{\real^{d}\setminus N}(\widehat{X})
	\right] \notag \\
	&\leq
	K_{0}(s,p)^{q}
	{\mathbb E}
	\left[
		\left|
			X
			-
			\widehat{X}
		\right|^{qs}
		\{
			M(G_{s,p}f)(X)
			+
			M(G_{s,p}f)(\widehat{X})
		\}^{q}
		{\bf 1}_{\Omega(|G_{s,p}f|^{p},\lambda)^{{\rm c}}}
	\right]
	\notag \\
	&\leq
	K_{0}(s,p)^{q}
	{\mathbb E}
	\left[
		\left|
			X
			-
			\widehat{X}
		\right|^{qs}
		\{
			M(|G_{s,p}f|^{p})(X)^{\frac{1}{p}}
			+
			M(|G_{s,p}f|^{p})(\widehat{X})^{\frac{1}{p}}
		\}^{q}
		{\bf 1}_{\Omega(|G_{s,p}f|^{p},\lambda)^{{\rm c}}}
	\right]
	\notag \\
	&\leq
	(2K_{0}(s,p))^{q}
	\lambda^{\frac{q}{p}}
	{\mathbb E}
	\left[
		\left|
			X
			-
			\widehat{X}
		\right|^{qs}
	\right].
\end{align}
	By choosing $\lambda:={\mathbb E}[|X-\widehat{X}|^{qs}]^{-\frac{1}{q/p+1-q/r}}$, we conclude the statement for $p_{X} \in L^{\infty}(\real^{d})$ from \eqref{eq:18} and \eqref{eq:35}.
	
	Now we suppose $p_{X}, p_{\widehat{X}} \in L^{p^{*}}(\real^{d})$.
	For $\lambda>0$, we define the event $\Omega(G_{s,p}f, \lambda) \in \mathscr{F}$ by
	\begin{align*}
		\Omega(G_{s,p}f, \lambda)
		:=
		\left\{
			M(G_{s,p}f)(X)>\lambda
		\right\}
		\cup
		\left\{
			M(G_{s,p}f)(\widehat{X})>\lambda
		\right\}.
	\end{align*}
	By using the Markov inequality, H\"older's inequality and Lemma \ref{Lem_key_2} (ii), we obtain
	\begin{align*}
		{\mathbb P}(\Omega(G_{s,p}f, \lambda))
		&\leq
		\int_{{\mathbb R}^{d}}
			M(G_{s,p}f)(x)
			\{
				p_{X}(x)
				+
				p_{\widehat{X}}(x)
			\}
		\rd x
		\lambda^{-1}\\
		&\leq
		\{
			\|p_{X}\|_{L^{p^{*}}({\mathbb R}^{d})}
			+
			\|p_{\widehat{X}}\|_{L^{p^{*}}({\mathbb R}^{d})}
		\}
		\|M(G_{s,p}f)\|_{L^{p}({\mathbb R}^{d})}
		\lambda^{-1}\\
		&\leq
		A_{p}
		\{
			\|p_{X}\|_{L^{p^{*}}({\mathbb R}^{d})}
			+
			\|p_{\widehat{X}}\|_{L^{p^{*}}({\mathbb R}^{d})}
		\}
		\|G_{s,p}f\|_{L^{p}({\mathbb R}^{d})}
		\lambda^{-1}.
	\end{align*}
	Hence by using H\"older's inequality with $\frac{1}{r/q}+\frac{1}{r/(r-q)}=1$ in the case of $r \in (1,\infty)$ and by using the boundedness of $f$ in the case of $r=\infty$, we have
	\begin{align}\label{eq:19}
		&{\mathbb E}
		\left[
			\left|
				f(X)
				-
				f(\widehat{X})
			\right|^{q}
			{\bf 1}_{\Omega(G_{s,p}f, \lambda)}
		\right] \notag \\
		&\leq \left(
			\|f\|_{L^{r}(\real^{d},p_{X})}
			+
			\|f\|_{L^{r}(\real^{d},p_{\widehat{X}})}
		\right)^{q}{\mathbb P}(\Omega(G_{s,p}f,\lambda))^{1-\frac{q}{r}} \notag \\
		&\leq 
		\left(
			\|f\|_{L^{r}(\real^{d},p_{X})}
			+
			\|f\|_{L^{r}(\real^{d},p_{\widehat{X}})}
		\right)^{q}
		\left(
			A_{p}
			\{
				\|p_{X}\|_{L^{p^{*}}({\mathbb R}^{d})}
				+
				\|p_{\widehat{X}}\|_{L^{p^{*}}({\mathbb R}^{d})}
			\}
			\|G_{s,p}f\|_{L^{p}({\mathbb R}^{d})}
			\right)^{1-\frac{q}{r}}
		\lambda^{-(1-\frac{q}{r})}.
	\end{align}
	On the event $\Omega(G_{s,p}f, \lambda)^{{\rm c}}$, since $X$ and $\widehat{X}$ have density functions, by Lemma \ref{lem:0.5}, we obtain
	\begin{align}\label{eq:20}
		&{\mathbb E}
		\left[
			\left|
				f(X)
				-
				f(\widehat{X})
			\right|^{q}
			{\bf 1}_{\Omega(G_{s,p}f, \lambda)^{{\rm c}}}
		\right]
		\notag\\&=
		{\mathbb E}
		\left[
			\left|
				f(X)
				-
				f(\widehat{X})
			\right|^{q}
			{\bf 1}_{\Omega(G_{s,p}f, \lambda)^{{\rm c}}}
			\1_{\real^{d} \setminus N}(X)
			\1_{\real^{d} \setminus N}(\widehat{X})
		\right]
		\notag\\
		&\leq
		K_{0}(s,p)^{q}
		{\mathbb E}
		\left[
			\left|
				X
				-
				\widehat{X}
			\right|^{qs}
			\{
				M(G_{s,p}f)(X)
				+
				M(G_{s,p}f)(\widehat{X})
			\}^{q}
			{\bf 1}_{\Omega(G_{s,p}f, \lambda)^{{\rm c}}}
		\right]\notag\\
		&\leq
		(2K_{0}(s,p))^{q}
		\lambda^{q}
		{\mathbb E}
		\left[
			\left|
				X
				-
				\widehat{X}
			\right|^{qs}
		\right].
	\end{align}
	By choosing $\lambda:=\e[|X-\widehat{X}|^{qs}]^{-\frac{1}{q+1-q/r}}$, we conclude the statement for $p_{X} \in L^{p^{*}}(\real^{d})$ from \eqref{eq:19} and \eqref{eq:20}.
\end{proof}

\section{Applications}\label{sec_3}
In this section, we apply Avikanen's estimates proved in Section \ref{sec_2} to numerical analysis on irregular functionals of a solution to stochastic differential equations (SDEs) based on the multilevel Monte Carlo method.
%, and to estimates of the $L^{2}$-time regularity of decoupled forward--backward stochastic differential equations (FBSDEs) with irregular terminal conditions.

\subsection{Upper bound and integrability of density functions}\label{sec_3_1}
In order to apply Avikanen's estimates proved in Section \ref{sec_2}, we need an appropriate upper bound or integrability of density functions.
In this subsection, we give some examples of random variables with a bounded or integrable density function, which are studied in various ways.

We first give the well-known fact as a conclusion of L\'evy's inversion formula.

\begin{Eg}\label{Eg_density_0}
	Let $X:\Omega \to \real^{d}$ be a random variable.
	If the characteristic function $\varphi_{X}(\xi):=\e[e^{\sqrt{-1}\langle \xi, X\rangle_{\real^{d}}}]$ belongs to $L^{1}(\real^{d})$, then by using L\'evy's inversion formula, $X$ admits a continuous density function $p_{X}$ of the form
	\begin{align*}
		p_{X}(x)
		=
		\frac{1}{(2\pi)^{d}}
		\int_{\real^{d}}
			e^{-\sqrt{-1}\langle x, \xi \rangle_{{\mathbb R}^{d}}}
			\varphi_{X}(\xi)
		\rd \xi
	\end{align*}
	(see, e.g., Proposition 2.5 in \cite{Sato} or Theorem 16.6 in \cite{Wi91}), and thus $X$ has a bounded density function.
\end{Eg}

Next, we recall the Gaussian two-sided bound for density functions of solutions to SDEs driven by a Brownian motion.

\begin{Eg}\label{Eg_GB_1}
	Let $B=(B(t))_{t \in [0,T]}$ be a $d$-dimensional standard Brownian motion,
	and let $X=(X(t))_{t \in [0,T]}$ be a solution to the following $d$-dimensional Markovian SDE of the form
	\begin{align}\label{SDE_0}
		\rd X(t)
		=
		b(t,X(t)) \rd t
		+
		\sigma(t,X(t))\rd B(t),
		~X(0)=x \in \real^{d},
		~t\in [0,T],
	\end{align}
	and let $X^{(n)}=(X^{(n)}(t))_{t \in [0,T]}$ be the Euler--Maruyama scheme for SDE \eqref{SDE_0} with time step $T/n$, which is defined by
	\begin{align*}
	\rd X^{(n)}(t)
	=
	b(\eta_{n}(t), X^{(n)}(\eta_{n}(t))) \rd t
	+
	\sigma(\eta_{n}(t), X^{(n)}(\eta _{n}(t))) \rd B(t),~
	X^{(n)}(0)=X(0),~
	t \in [0,T],
	\end{align*}
	where $\eta _{n}(s):=kT/n$ if $s \in [kT/n,(k+1)T/n)$, the drift coefficient $b:[0,T] \times \real^{d} \to \real^{d}$ and the diffusion matrix $\sigma:[0,T] \times \real^{d} \to \real^{d \times d}$ are measurable functions.
	Suppose that $b$ is bounded and $\sigma$ satisfies the following two conditions.
	\begin{itemize}
		\item[(i)]
		$a:=\sigma \sigma^{\top}$ is $\alpha$-H\"older continuous in space and $\alpha/2$-H\"older continuous in time for some $\alpha \in (0,1]$, that is,
		\begin{align*}
			\|a\|_{\alpha}
			:=
			\sup_{t \in [0,T], x \neq y}
			\frac{|a(t,x)-a(t,y)|}{|x-y|^{\alpha}}
			+
			\sup_{x\in \real^d, t \neq s}
			\frac{|a(t,x)-a(s,x)|}{|t-s|^{\alpha/2}}
			<\infty.
		\end{align*}
		
		\item[(ii)]
		The diffusion coefficient $\sigma$ is bounded and uniformly elliptic, that is, there exist $\underline{a}, \overline{a}>0$ such that for any $(t,x,\xi) \in [0,T] \times \real^d \times \real^d$, $\underline{a}|\xi|^2 \leq \langle a(t,x) \xi,\xi \rangle_{{\mathbb R}^{d}} \leq \overline{a} |\xi|^2$.
	\end{itemize}
	Then there exists a weak solution of SDE \eqref{SDE_0} and the uniqueness in law holds (see, Theorem 4.2, 5.6 in \cite{StVa69} or Proposition 1.14 in \cite{ChEn}), and it is well-known that for all $t \in (0,T]$, $X(t)$ admits a density function $p_{t}(x,\cdot)$ with respect to Lebesgue measure (e.g. Theorem 9.1.9 in \cite{StVa79}) which has the Gaussian two sided bound, that is, there exist $C_{\pm}>0$ and $c_{\pm}>0$ such that for any $(t,x,y) \in (0,T]\times \real^{d} \times \real^{d}$,
	\begin{align}\label{GB_1}
		C_{-} g_{c_{-}t}(x,y)
		&\leq
		p_{t}(x,y)
		\leq
		C_{+} g_{c_{+}t}(x,y).
	\end{align}
	As references of this two-sided bound, we refer to Theorem 9.4.2 in \cite{Fr64}, section 4.1, 4.2 and Remark 4.1 in \cite{LeMe10}, and \cite{Sh91} for time independent case (see also \cite{QiZh02,QiZh03,TaTa18} for a sharp two-sided bound in the case $\sigma \equiv I$).
	Therefore, it holds that $p_{t}(x,\cdot) \in L^{\infty}(\real^{d}) \cap L^{\Psi}(\real^{d}) \cap L^{p^{*}(\cdot)}(\real^{d}) $ for the complementary function $\Psi$ of an N-function $\Phi$ (see, Remark \ref{Rem_Orlicz_1} (ii) and Example \ref{Ex_Orlicz_0} (iii)) and $p^{*}(\cdot):=p(\cdot)/(p(\cdot)-1)$ for $p \in \mathcal{P}(\real^{d})$ with $1<p^{-}\leq p^{+} <\infty$.
	Moreover, for all $k=1,\ldots,n$, $X^{(n)}(kT/n)$ admits a density function $p_{kT/n}^{(n)}(x,\cdot)$ with respect to Lebesgue measure (e.g. section 2 in \cite{LeMe10}), which has the Gaussian two sided bound uniformly in $n$, that is, there exist $C_{\pm}>0$ and $c_{\pm}>0$ independent of $n$ such that for any $(k,x,y) \in \{1,\ldots,n\} \times \real^{d} \times \real^{d}$,
	\begin{align}\label{GB_2}
		C_{-} g_{c_{-}kT/n}(x,y)
		&\leq
		p_{kT/n}^{(n)}(x,y)
		\leq
		C_{+} g_{c_{+}kT/n}(x,y)
	\end{align}
	(see, Theorem 2.1 in \cite{LeMe10}).
	
	Moreover, the same or similar bounds \eqref{GB_1} hold for SDEs with a path--dependent or an unbounded drift (see, Theorem 2.5. in \cite{Ku17}, Theorem 3.4 in \cite{TaTa18} and Theorem 1.2 in \cite{MePeZh20}), and the Gaussian two sided bound \eqref{GB_1} holds for Brownian motions with a signed measure valued drift belonging to the Kato class $K_{d,1}$ (see, Theorem 3.14 in \cite{KiSo06}).
	
	It is also well-known that the Gaussian two--sided bound holds for the fundamental solution $\Gamma(s,x;t,y)$ of parabolic equations in the divergence form $(\frac{\partial}{\partial s}+\frac{1}{2}\sum_{i,j=1}^{d} \frac{\partial}{\partial x_{i}}a_{i,j}(x) \frac{\partial}{\partial x_{j}}) u(s,x)=0$ (see, \cite{Ar67}), and there exists a Hunt process with the transition density function $\Gamma$ (see, Example 4.5.2, Theorem A.2.2 in \cite{FOT} and Theorem I 9.4 in \cite{BlGe68}).
	
	On the other hand, Malliavin calculus can be used to study the regularity and upper bounds of density functions.
	%if the coefficients $b$ and $\sigma$ satisfy that $b \in C^{0,2+n}_{b}([0,T] \times \real^d;\real^d)$ and $\sigma \in C^{0,2+n}_{b}([0,T]\times \real^d;\real^{d\times d})$ for some $n \in \n \cup \{0\}$, and $\sigma$ is uniformly elliptic, then for all $t \in (0,T]$, $X(t)$ admits a density function $p_{t}(x,\cdot)$ with respect to Lebesgue measure which belongs to $C^{n}_{b}(\real^{d};\real)$ (see, \cite{KuSt82})
	Indeed, it is known that under H\"ormander's and the smoothness conditions on the coefficients, $X(t)$ admits a bounded and smooth density function (see, e.g., Theorem 6.16 in \cite{Shi04} and see, also \cite{DeMe10,KoMa13} for the Gaussian type estimates for density functions of solutions to degenerate SDEs).
	We also note that the Gaussian type two sided bound holds for density functions of solutions to SDEs driven by a fractional Brownian motion (see, \cite{BaNuOuTi16,BeKoTi16}).
\end{Eg}

The next example shows density estimates for path-dependent stochastic differential equations.

\begin{Eg}
	Let $B=(B(t))_{t \in [0,T]}$ be a $d$-dimensional standard Brownian motion,
	and let $X=(X(t))_{t \in [0,T]}$ be a solution to the following $d$-dimensional path--dependent SDE of the form
	\begin{align*}
		\rd X(t)
		=
		b(t,X) \rd t
		+
		\sigma(t,X)\rd B(t),
		~X(0)=x \in \real^{d},
		~t\in [0,T],
	\end{align*}
	where the drift coefficient $b:[0,T] \times C([0,T];\real^d) \to \real^{d}$ and the diffusion matrix $\sigma:[0,T] \times C([0,T];\real^d) \to \real^{d \times d}$ are measurable functions.
	\begin{itemize}
		\item[(i)]
		Suppose that the coefficients $b$ and $\sigma$ are continuous in time and bounded continuously G\^ateaux differentiable up to order $n+2$ in space, and $\sigma$ is uniformly elliptic.
		Then, by using Malliavin calculus, it is shown that for all $t \in (0,T]$, $X(t)$ admits a density function with respect to Lebesgue measure which belongs to $C^{n}_{b}(\real^{d};\real)$ (see, \cite{KuSt82}).
		
		\item[(ii)]
		Suppose that the coefficients $b$ and $\sigma$ are bounded, $\sigma$ is uniformly elliptic and there exist $\varepsilon>0$ and $C>0$ such that for any $(s,t,\omega) \in [0,T] \times [0,T] \times C([0,T];\real^d)$ with $s<t$, 
		\begin{align*}
			\sup_{1\leq j \leq d}
			|\sigma_{j}(t,\omega) -\sigma_j(s,\omega)|
			\leq
			C
			\left\{
				\log
				\left(
					\frac{1}{\sup_{s\leq u \leq t} |\omega_u-\omega_s|}
				\right)
			\right\}^{-(2+\varepsilon)},
		\end{align*}
		where $\sigma_{j}:=(\sigma_{1,j},\ldots,\sigma_{d,j})^{\top}$.
		Then, by using an interpolation method, it is shown that for all $t \in (0,T]$, $X(t)$ admits a density function $p_{t}(x,\cdot)$ with respect to Lebesgue measure which belongs to $L^{\mathbf{e}_{\log}}(\real^{d})$, where $\mathbf{e}_{\log}(x):=(1+|x|)\log (1+|x|)$ (see, Theorem 3.1 in \cite{BaCa}).
		
	\end{itemize}
\end{Eg}

Finally, we give examples for a two sided bound of density functions of solutions to SDEs driven by a rotation invariant $\alpha$-stable process.

\begin{Eg}\label{Eg_Levy}
	Let $Z=(Z(t))_{t \in [0,T]}$ be a rotation invariant $\alpha$-stable process in $\real^{d}$ with $\alpha \in (0,2)$ and $\e[e^{\sqrt{-1}\langle \xi,Z(t) \rangle}]=e^{-t|\xi|^{\alpha}}$, $\xi \in \real^{d}$(see Theorem 14.14 in \cite{Sato}), and let $X=(X(t))_{t \in [0,T]}$ be a solution to the following $d$-dimensional SDE of the form
	\begin{align}\label{SDE_stable_0}
		\rd X(t)
		=
		b(X(t)) \rd t
		+
		\sigma(X(t-))I\rd Z(t),~
		X(0)=x \in \real^{d},~
		t\in [0,T],
	\end{align}
	where $b:\real^{d} \to \real^{d}$ and $\sigma:\real^{d} \to \real$ are bounded measurable functions.
	Suppose that the drift coefficient $b$ is $\gamma$-H\"older continuous with $\gamma \in (0,1]$, and the jump intensity coefficient $a:=|\sigma|^{\alpha}$ is $\eta$-H\"older continuous with $\eta \in (0,1]$ and uniformly positive, that is, there exists $\underline{a}>0$ such that for any $x \in {\mathbb R}^{d}$, $a(x) \geq \underline{a}$.
	Under the balance condition $\alpha+\gamma>1$, Kulik \cite{Kul19} proved that the existence of a unique weak solution to the equation \eqref{SDE_stable_0}.
	Moreover, by using the parametrix method, he showed that for all $t \in (0,T]$, $X(t)$ admits a density function $p_{t}(x,\cdot)$ with respect to Lebesgue measure and gave its two sided bound, that is, there exist $C_{\pm}>0$ such that for any $(t,x,y) \in (0,T]\times \real^{d} \times \real^{d}$,
	\begin{align*}
		C_{-}
		\widetilde{p}_{t}(x,y)
		\leq
		p_{t}(x,y)
		\leq
		C_{+}
	\widetilde{p}_{t}(x,y),
	\end{align*}
	where
	\begin{align*}
		\widetilde{p}_{t}(x,y)
		:=
		\frac{1}{t^{d/\alpha} a(x)^{d/\alpha}}
		g^{(\alpha)}
		\left(
			\frac{y-v_{t}(x)}{t^{1/\alpha} a(x)^{1/\alpha}}
		\right),
	\end{align*}
	$g^{(\alpha)}$ is the density function of $Z(1)$ and $\{v_{t}(x)\}_{t \in [0,T]}$ is a solution to ODE $\rd v_{t}(x)=b(v_{t}(x)) \rd t$ with $v_{0}(x)=x$ (see Theorem 2.1 and Theorem 2.2 in \cite{Kul19}).
	Note that if $\gamma<1$, then such a solution of ODE may fail to be uniqueness, and if $\alpha+\gamma<1$, then a solution of SDE \eqref{SDE_stable_0} may fail to be uniqueness in law (see, Theorem 3.2 (ii) in \cite{TaTsuWa74}).
	
	Moreover, by the asymptotic behaviour of $g^{(\alpha)}$ (see, e.g., Theorem 2.1 in \cite{BlGe60}), we have $g^{(\alpha)}(x) \leq C \min\{1, |x|^{-d-\alpha}\}$ for some $C>0$, which implies that $p_{t}(x,\cdot) \in L^{\infty}(\real^{d}) \cap L^{\Psi}(\real^{d}) \cap L^{p^{*}(\cdot)}(\real^{d}) $ for the complementary function $\Psi$ of an N-function $\Phi$ (see, Remark \ref{Rem_Orlicz_1} (ii) and Example \ref{Ex_Orlicz_0} (iii)) and $p^{*}(\cdot):=p(\cdot)/(p(\cdot)-1)$ for $p \in \mathcal{P}(\real^{d})$ with $1<p^{-}\leq p^{+} <\infty$, (see, also \cite{KaSz15,KnSc13,Kuh19} for upper bounds of density functions of L\'evy processes).
\end{Eg}

\subsection{Multilevel Monte Carlo method}\label{sec_3_2}
In this subsection, we apply Avikainen's estimates proved in Section \ref{sec_2} to the multilevel Monte Carlo method for solutions to SDE \eqref{SDE_0}.
We first define the union of function spaces $F(\real^{d})$ by
\begin{align*}
	F(\real^{d})
	:=
	\left\{
		BV(\real^{d})
		\cup
		W^{1,\Phi}(\real^{d})
		\cup
		W^{1,p(\cdot)}(\real^{d})
		\cup
		W^{s,p}(\real^{d})
	\right\}
	\bigcap
	L^{\infty}(\real^{d})
\end{align*}
for an N-function $\Phi$ with its complementary function $\Psi$ which satisfies the $\Delta_{2}$-condition, a variable exponent $p(\cdot) \in \mathcal{P}^{\log}(\real^{d})$ with $1<p^{-}\leq p^{+} <\infty$ and $(s,p) \in (0,1] \times [1,\infty)$.

We consider the computational complexity of the mean squared error (MSE) to estimate the expectation of $P:=f(X(T))$ for some measurable function $f:\real^{d} \to \real$ with $\e[|f(X(T))|]<\infty$, by using the standard and multilevel Monte Carlo method.

We first recall the standard Monte Carlo method.
Let $X^{(h)}=(X^{(h)}(t))_{t \in [0,T]}$ be the Euler--Maruyama scheme for SDE \eqref{SDE_0} with time step $h \in (0,T)$, which is defined by
\begin{align*}
	\rd X^{(h)}(t)
	=
	b(\eta_{h}(t), X^{(h)}(\eta_{h}(t))) \rd t
	+
	\sigma(\eta_{h}(t), X^{(h)}(\eta _{h}(t))) \rd B(t),~
	X^{(h)}(0)=X(0),~
	t \in [0,T],
\end{align*}
where $\eta _{h}(s):=kh$ and $k$ is the natural number such that $s/h-1<k\leq s/h$.
We define $\widehat{P}^{(h)}:=f(X^{(h)}(T))$.
Let $\widehat{Y}^{(h)}$ be an estimator for $\widehat{P}^{(h)}$.
For example, one may use $\widehat{Y}^{(h)}$ as the arithmetic mean, that is,
\begin{align*}%\label{eq:41}
	\widehat{Y}^{(h)}
	:=
	N^{-1}
	\sum_{i=1}^{N}
	\widehat{P}^{(h,i)},
\end{align*}
where $\widehat{P}^{(h,1)}, \ldots, \widehat{P}^{(h,N)}$ are i.i.d. random variables which have the same distribution as $\widehat{P}^{(h)}$.
We suppose that the weak rate of convergence for $X^{(h)}$ is $\alpha>0$, that is, there exists $c_{0}>0$ such that $|\e[f(X(T))]-\e[f(X^{(h)}(T))]| \leq c_{0}h^{\alpha}$.
Then the mean squared error is estimated as follows:
\begin{align*}
	\text{MSE}
	:=
	\e[|\widehat{Y}^{(h)}-\e[P]|^{2}]
	=
	\e[|\widehat{Y}^{(h)}
	-
	\e[\widehat{Y}^{(h)}]|^{2}]
	+
	\left|\e[\widehat{P}^{(h)}]-\e[P]\right|^{2}
	\leq
		\mathrm{Var}[\widehat{P}^{(h)}]
		N^{-1}
		+
		c_{0}^{2}
		h^{2\alpha}.
\end{align*}
Assume that $\sup_{h \in (0,T)} \mathrm{Var}(\widehat{P}^{(h)})<\infty$.
Then, if we would like to make $\mathrm{MSE} \leq \varepsilon^{2}$ for a given $\varepsilon>0$, we choose $N$ and $h$ to satisfy $\sup_{h \in (0,T)} \mathrm{Var}[\widehat{P}^{(h)}]N^{-1} \leq \varepsilon^{2}/2$ and $c_{0}^{2}h^{2\alpha} \leq \varepsilon^{2}/2$.
Then the computational complexity for $\widehat{Y}^{(h)}$ is estimated above by $c_{1}\varepsilon^{-(2+1/\alpha)}$ for some constant $c_{1}>0$.

Now we recall the multilevel Monte Carlo method.
Let $M \in {\mathbb N}$ and $X_{\ell}=(X_{\ell}(t))_{t \in [0,T]}$, $\ell=0,\ldots,L$ be numerical approximations to $X$ with each time step $h_{\ell}:=T/M^{\ell}$ and define $\widehat{P}_{\ell}:=f(X_{\ell}(T))$.
Then it holds that
\begin{align*}
	\e[\widehat{P}_{L}]
	=
	\sum_{\ell=0}^{L}
	\e[
		\widehat{P}_{\ell}
		-
		\widehat{P}_{\ell-1}
	],
\end{align*}
where $\widehat{P}_{-1}:=0$.
Let $\widehat{Y}_{\ell}$, $\ell=0,\ldots,L$ be independent estimators for each $\e[\widehat{P}_{\ell}-\widehat{P}_{\ell-1}]$ and $C_{\ell}$, $\ell=0,\ldots,L$ be their  corresponding computational complexities.
For example, one may use $\widehat{Y}_{\ell}$ as the arithmetic mean, that is,
\begin{align}\label{MLMC_estimator_0}
	\widehat{Y}_{\ell}
	=
		N_{\ell}^{-1}
	\sum_{i=1}^{N_{\ell}}
	\left(
		\widehat{P}_{\ell}^{(\ell,i)}
		-
		\widehat{P}_{\ell-1}^{(\ell,i)}
	\right),
\end{align}
where $\widehat{P}_{\ell}^{(\ell,1)}-\widehat{P}_{\ell-1}^{(\ell,1)},\ldots,\widehat{P}_{\ell}^{(\ell,N_{\ell})}-\widehat{P}_{\ell-1}^{(\ell,N_{\ell})}$ are i.i.d random variables which have the same distribution as $\widehat{P}_{\ell}-\widehat{P}_{\ell-1}$.
Note that the random variable $\widehat{P}_{\ell}-\widehat{P}_{\ell-1}$ is the deference between two discrete approximations with different time steps $h_{\ell}$ and $h_{\ell-1}$, and the key point is that they are defined by the same Brownian motion.
We define the estimator $\widehat{Y}:=\sum_{\ell=0}^{L} \widehat{Y}_{\ell}$ and its computational complexity $C_{\text{MLMC}}:=\sum_{\ell=0}^{L}C_{\ell}$.
Then the following complexity theorem holds for the MLMC method.

\begin{Thm}[Complexity theorem, Theorem 3.1 in \cite{Gi08}, Theorem 2.1 \cite{Gi15}]\label{MLMC_0}
	Let $X$ be a solution to SDE \eqref{SDE_0}, and define $P:=f(X(T))$.
	We assume that $\{\widehat{P}_{\ell}\}_{\ell=0,\ldots,L}$, independent estimators $\{\widehat{Y}_{\ell}\}_{\ell=0,\ldots,L}$ and their  corresponding computational complexities $\{C_{\ell}\}_{\ell=0\ldots, L}$ satisfy the following conditions:
	there exist positive constants $c_{1}, c_{2}, c_{3}$ and $\alpha,\beta$ such that $\alpha \geq \beta/2$ and
	(i) $|\e[P]-\e[\widehat{P}_{\ell}]|\leq c_{1}h_{\ell}^{\alpha}$;
	(ii) $\e[\widehat{Y}_{\ell}]=\e[\widehat{P}_{\ell}-\widehat{P}_{\ell-1}]$;
	(iii) $\mathrm{Var}[\widehat{Y}_{\ell}] \leq c_{2} N_{\ell}^{-1} h_{\ell}^{\beta}$;
	(iv) $C_{\ell} \leq c_{3}N_{\ell} h_{\ell}^{-1}$.
	Then for any $\varepsilon \in (0,1/e)$, the mean squared error is estimated by
	\begin{align*}
		\text{MSE}
		:=
		\e\left[
			\left|
				\widehat{Y}
				-
				\e[P]
			\right|^2
		\right]
		\leq
		\varepsilon^2
	\end{align*}
	with the computational complexity $C_{\text{MLMC}}$ for $\widehat{Y}$ bounded by
	\begin{align*}
		C_{\text{MLMC}}
		\leq
		\left\{ \begin{array}{ll}
		\displaystyle
			c_{4}\varepsilon^{-2},
			&\text{ if } \beta \in (1,\infty),\\
		\displaystyle
			c_{4}\varepsilon^{-2} (\log \varepsilon)^{2},
			&\text{ if } \beta =1,  \\
		\displaystyle
			c_{4}
			\varepsilon^{-\{2+(1-\beta)/\alpha\}},
			&\text{ if } \beta \in (0,1)
		\end{array}\right.
	\end{align*}
	for some constants $c_{4}>0$.
\end{Thm}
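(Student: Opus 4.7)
The plan is to split the mean squared error into variance and squared bias, control the bias with assumption (i), and then optimize over the sample sizes $N_\ell$ to minimize total cost while keeping the variance below $\varepsilon^2/2$.

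First, I would write
\[
\mathrm{MSE}
= \mathrm{Var}\bigl[\widehat{Y}\bigr]
+ \bigl(\e[\widehat{Y}] - \e[P]\bigr)^2.
\]
Independence of the level estimators together with (ii) gives $\e[\widehat{Y}] = \e[\widehat{P}_L]$ and $\mathrm{Var}[\widehat{Y}] = \sum_{\ell=0}^L \mathrm{Var}[\widehat{Y}_\ell] \le c_2 \sum_{\ell=0}^L N_\ell^{-1} h_\ell^{\beta}$. Assumption (i) therefore yields $|\e[\widehat{Y}] - \e[P]|^2 \le c_1^2 h_L^{2\alpha}$, and I would pick the smallest $L \in \mathbb{N}$ with $c_1^2 h_L^{2\alpha} \le \varepsilon^2/2$; this gives $L = O(\log(1/\varepsilon)/\log M)$ and $h_L^{-1} = O(\varepsilon^{-1/\alpha})$.

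Next I impose the variance constraint $c_2 \sum_\ell N_\ell^{-1} h_\ell^{\beta} \le \varepsilon^2/2$ and minimize the cost $\sum_\ell c_3 N_\ell h_\ell^{-1}$. A Lagrange-multiplier computation gives the optimal continuous choice $N_\ell \propto h_\ell^{(\beta+1)/2}$, and concretely I would take
\[
N_\ell = \Bigl\lceil 2 c_2 \varepsilon^{-2}\, h_\ell^{(\beta+1)/2}\, S_L \Bigr\rceil,
\qquad
S_L := \sum_{j=0}^L h_j^{(\beta-1)/2}.
\]
This choice satisfies the variance budget and gives a total cost
\[
C_{\mathrm{MLMC}}
\le c_3 \sum_{\ell=0}^L N_\ell h_\ell^{-1}
\le c_5 \Bigl( \varepsilon^{-2} S_L^{2} + \sum_{\ell=0}^L h_\ell^{-1} \Bigr),
\]
where the second term accounts for the rounding $\lceil\,\cdot\,\rceil$.

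The remaining step is to estimate $S_L$ and $\sum_\ell h_\ell^{-1}$ via the geometric-series identity $h_\ell = TM^{-\ell}$, splitting by the sign of $\beta-1$: for $\beta>1$ the series $S_L$ is bounded uniformly in $L$ so the cost is $O(\varepsilon^{-2})$; for $\beta=1$ we get $S_L = L+1 = O(\log \varepsilon)$ and hence $O(\varepsilon^{-2}(\log \varepsilon)^2)$; for $\beta<1$ the series is dominated by the last term, $S_L = O(h_L^{-(1-\beta)/2}) = O(\varepsilon^{-(1-\beta)/(2\alpha)})$, giving $O(\varepsilon^{-(2+(1-\beta)/\alpha)})$. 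The rounding-correction term $\sum_\ell h_\ell^{-1} = O(h_L^{-1}) = O(\varepsilon^{-1/\alpha})$ is absorbed in each case precisely because $2\alpha \ge \beta$ (equivalently $1/\alpha \le 2 + (1-\beta)/\alpha$), which is where the hypothesis $\alpha \ge \beta/2$ is used. I expect the only delicate point to be verifying that the ceilings in the definition of $N_\ell$ do not spoil the dominant term in the $\beta<1$ regime; everything else is bookkeeping with geometric sums.
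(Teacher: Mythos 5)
Your proposal is correct and is essentially the argument of Giles (Theorem 3.1 in \cite{Gi08}, Theorem 2.1 in \cite{Gi15}), which the paper invokes by citation rather than reproving: bias/variance split, $N_\ell \propto h_\ell^{(\beta+1)/2}$ from the Lagrange optimization, and the three geometric-sum regimes for $S_L$, with the rounding term $\sum_\ell h_\ell^{-1}=O(\varepsilon^{-1/\alpha})$ absorbed exactly because $\alpha \ge \beta/2$ (and $\varepsilon<1/e$ handling the $\beta=1$ logarithm). No gaps.
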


Before applying Theorem \ref{MLMC_0} to the Euler--Maruyama scheme $X^{(n)}$ with time step $h=T/n$, we provide the strong rate of convergence for \eqref{MSE_0} for irregular functions $f \in F(\real^{d})$.

\begin{Thm}\label{Cor_0}
	Suppose that the coefficients $b$ and $\sigma$ of SDE \eqref{SDE_0} are bounded and Lipschitz continuous in space, and $1/2$-H\"older continuous in time, and $\sigma$ is uniformly elliptic.
	Then for any $f \in F(\real^{d})$, $q \in [1,\infty)$ and $\delta \in (0,1)$, there exist $C_{\text{EM}}(q,\delta)>0$ and $C_{\text{EM}}(q)>0$ such that
	\begin{align*}
		\e\left[
			\left|
				f(X(T))
				-
				f(X^{(n)}(T))
			\right|^{q}
		\right]
		\leq
		\left\{ \begin{array}{ll}
			\displaystyle
				C_{\text{EM}}(q,\delta)
				n^{-\frac{\delta}{2}},
				&\text{ if } f \in BV(\real^{d}) \cap L^{\infty}(\real^{d}),  \\
			\displaystyle
				C_{\text{EM}}(q)
				n^{-\frac{q}{2(q+1)}},
				&\text{ if } f \in \{W^{1,\Phi}(\real^{d}) \cup W^{1,p(\cdot)}(\real^{d})\} \cap L^{\infty}(\real^{d}),\\
			\displaystyle
				C_{\text{EM}}(q)
				n^{-\frac{pqs}{2(q+p)}},
				&\text{ if } f \in W^{s,p}(\real^{d}) \cap L^{\infty}(\real^{d}).
		\end{array}\right.
	\end{align*}
\end{Thm}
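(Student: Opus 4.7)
The plan is to reduce the theorem to a direct combination of three ingredients: the regularity/integrability of the density functions of $X(T)$ and $X^{(n)}(T)$ provided by Example \ref{Eg_GB_1}, the standard strong $L^{p}$-rate $1/2$ of the Euler--Maruyama scheme, and the Avikainen-type estimates already established in Section \ref{sec_2} (Theorems \ref{main_0}, \ref{main_1}, \ref{main_2}, \ref{main_3}), specialized to $r=\infty$ since $f$ is assumed essentially bounded.

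First I would verify the density hypothesis. Under the standing assumptions, $b$ is bounded and $a:=\sigma\sigma^{\top}$ is Lipschitz (hence $1$-H\"older) in space and $1/2$-H\"older in time, and $a$ is bounded and uniformly elliptic. Thus Example \ref{Eg_GB_1} applies: both $X(T)$ and $X^{(n)}(T)$ admit densities $p_{T}(x,\cdot)$, $p_{T/n \cdot n}^{(n)}(x,\cdot)$ enjoying the Gaussian two-sided bounds \eqref{GB_1}--\eqref{GB_2} \emph{uniformly in $n$}. In particular, these densities lie simultaneously in $L^{\infty}(\real^{d})$, in $L^{\Psi}(\real^{d})$ for the complementary function $\Psi$ of any N-function $\Phi$ (cf.\ Remark \ref{Rem_Orlicz_1}(ii) and Example \ref{Ex_Orlicz_0}(iii)), and in $L^{p^{*}(\cdot)}(\real^{d})$ for $p(\cdot)\in\mathcal{P}^{\log}(\real^{d})$ with $1<p^{-}\le p^{+}<\infty$, and their relevant norms are uniformly bounded in $n$. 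Second, under the same assumptions, the classical strong $L^{p}$ convergence of the Euler--Maruyama scheme yields, for every $p\in[1,\infty)$, a constant $K_{p}>0$ independent of $n$ with $\e[|X(T)-X^{(n)}(T)|^{p}]\le K_{p}n^{-p/2}$.

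The three estimates are now plug-and-play with $(X,\widehat{X})=(X(T),X^{(n)}(T))$ and $r=\infty$. For $f\in BV(\real^{d})\cap L^{\infty}(\real^{d})$, Theorem \ref{main_0} gives $\e[|f(X(T))-f(X^{(n)}(T))|^{q}]\le C\,\e[|X(T)-X^{(n)}(T)|^{p}]^{1/(p+1)}\le C' n^{-p/(2(p+1))}$ for every $p\in(0,\infty)$. Given $\delta\in(0,1)$, choosing $p\ge \delta/(1-\delta)$ yields the claimed bound $C_{\text{EM}}(q,\delta)n^{-\delta/2}$. For $f\in W^{1,\Phi}(\real^{d})\cap L^{\infty}(\real^{d})$ I would use Theorem \ref{main_1} in the $L^{\Psi}$-density branch (not the $L^{\infty}$-density branch, in order to avoid the $\Phi^{-1}$ factor), obtaining $\e[|X(T)-X^{(n)}(T)|^{q}]^{1/(q+1)}\le (K_{q}n^{-q/2})^{1/(q+1)} = K' n^{-q/(2(q+1))}$. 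For $f\in W^{1,p(\cdot)}(\real^{d})\cap L^{\infty}(\real^{d})$ the same calculation with Theorem \ref{main_2} (applied in the $L^{p^{*}(\cdot)}$-density branch) produces the identical exponent $q/(2(q+1))$. Finally, for $f\in W^{s,p}(\real^{d})\cap L^{\infty}(\real^{d})$ I would invoke Theorem \ref{main_3} in the $L^{\infty}$-density branch, which gives $\e[|X(T)-X^{(n)}(T)|^{qs}]^{p/(q+p)}\le (K_{qs}n^{-qs/2})^{p/(q+p)}=K'' n^{-pqs/(2(q+p))}$.

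No step should pose a serious obstacle; the only mild delicacy is checking that the Gaussian bounds for $X^{(n)}(T)$ hold \emph{uniformly in $n$} with $n$-independent constants (so that the multiplicative constants in the Avikainen estimates, which depend on $\|p_{\widehat{X}}\|_{\infty}$, $\|p_{\widehat{X}}\|_{L^{\Psi}}$, or $\|p_{\widehat{X}}\|_{L^{p^{*}(\cdot)}}$, are $n$-independent). This is exactly what \eqref{GB_2} in Example \ref{Eg_GB_1} provides, so the proof reduces to a tabulation of cases and a single optimization in $p$ in the $BV$ branch.
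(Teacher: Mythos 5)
Your proposal is correct and follows essentially the same route as the paper: invoke the strong rate $1/2$ of the Euler--Maruyama scheme, use the Gaussian bounds \eqref{GB_1}--\eqref{GB_2} from Example \ref{Eg_GB_1} (uniform in $n$) to place the densities of $X(T)$ and $X^{(n)}(T)$ in $L^{\infty}(\real^{d}) \cap L^{\Psi}(\real^{d}) \cap L^{p^{*}(\cdot)}(\real^{d})$, and then apply Theorems \ref{main_0}, \ref{main_1}, \ref{main_2}, \ref{main_3} with $r=\infty$, optimizing over $p$ in the $BV$ case. Your extra care in selecting the $L^{\Psi}$ (resp.\ $L^{p^{*}(\cdot)}$, $L^{\infty}$) branch of each theorem, so that the exponents match the claimed rates, simply makes explicit what the paper leaves implicit.
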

\begin{proof}
	We first note that under the assumptions on the coefficients, the strong rate of convergence for the Euler--Maruyama scheme is $1/2$, that is, for any $p >0$, there exists a constant $C_{p}>0$ such that $\e[|X(T)-X^{(n)}(T)|^{p}]^{1/p} \leq C_{p} n^{-1/2}$ (see, e.g. \cite{KP}).
	It follows from Example \ref{Eg_GB_1} that $X(T)$ and $X^{(h)}(T)$ admit density functions $p_{T}(x,\cdot)$ and $p_{T}^{(n)}(x,\cdot)$ with respect to Lebesgue measure which have the Gaussian upper bound \eqref{GB_1} and \eqref{GB_2}, respectively.
	Hence we have $p_{T}(x,\cdot), p_{T}^{(n)}(x,\cdot) \in L^{\infty}(\real^{d}) \cap L^{\Psi}(\real^{d}) \cap L^{p^{*}(\cdot)}(\real^{d}) $.
	Therefore, by using Theorem \ref{main_0}, \ref{main_1}, \ref{main_2}, \ref{main_3} with $r=\infty$, we conclude the statement.
\end{proof}

\begin{Rem}\label{Rem_GB_0}
	\begin{itemize}
		\item[(i)]
		Recently under non-Lipschitz coefficients, the strong rate of convergence for the Euler--Maruyama scheme are widely studied (see, \cite{BaHuYu19,BuDaGe19,GyRa11,KuSc19,LeSz17b,MeTa,MuYa20,NT1,NT2}).
		
		\item[(ii)]
		%Let $n \in \n$ and $h=T/n$.
		If we assume $f \in L^{r}(\real^{d},p_{T}(x,\cdot))$ for some $r \in (1,\infty)$, then similar estimates of Theorem \ref{Cor_0} hold.
		Note that the constants $C_{EM}$ may depend on $\|f\|_{L^{r}(\real^{d},p_{T}^{(n)}(x,\cdot))}$, especially on the time step $h=T/n$ (see also Remark \ref{rem:2.12} (v) and Remark \ref{rem:2.13} (iii)).
		However, by using the Gaussian upper bound \eqref{GB_2} for the density of $X^{(n)}(T)$, under the additional assumption $f \in L^{r}(\real^{d}, g_{c_{+}T}(x,\cdot))$, the constants $C_{EM}$ are uniformly bounded with respect to the time step $h=T/n$.
	\end{itemize}
\end{Rem}

As applications of Theorem \ref{MLMC_0} and Theorem \ref{Cor_0}, we have the following two examples for irregular functions $f \in F(\real^{d})$.

\begin{Eg}\label{Ex_MLMC_0}
	Let $X_{\ell}$ be the Euler--Maruyama scheme with time step $h_{\ell}=T/M^{\ell} $ and $\widehat{Y}_{\ell}$ be the independent estimator defined by \eqref{MLMC_estimator_0}.
	Suppose the coefficients $b$ and $\sigma$ of SDE \eqref{SDE_0} satisfy $b \in C^{1,3}_{b}([0,T]\times \real^{d};{\mathbb R}^{d})$, $\sigma \in C^{1,3}_{b}([0,T]\times \real^{d};{\mathbb R}^{d \times d})$ and $\partial_{t}\sigma \in C^{0,1}_{b}([0,T]\times \real^{d};{\mathbb R}^{d \times d})$, and $\sigma$ is uniformly elliptic.
	Then it follows from Theorem 2.5 in \cite{GoLa08} that for any bounded measurable function $f:\real^{d} \to \real$, the weak rate of convergence $\alpha$ in Theorem \ref{MLMC_0} is one (see also, Theorem 3.5 in \cite{BaTa96}, Corollary 22 in \cite{Gu06}, Theorem 1.1 in \cite{KoMa02} and Theorem 1 in \cite{TaTu90}).
	Note that
	\begin{align*}
		\mathrm{Var}[\widehat{Y}_{\ell}]
		&=
		\e[|\widehat{Y}_{\ell}-\e[\widehat{Y}_{\ell}]|^{2}]
		=
		N_{\ell}^{-1}
		\mathrm{Var}[\widehat{P}_{\ell}-\widehat{P}_{\ell-1}]
	\end{align*}
	for each $\ell=0,\ldots,L$.
	Hence by using Theorem \ref{Cor_0}, for  any $f \in F(\real^{d})$, we have
	\begin{align*}
		&\mathrm{Var}[\widehat{P}_{\ell}-\widehat{P}_{\ell-1}]
		\leq
		\left|
			\e[\widehat{P}_{\ell}]
			-
			\e[\widehat{P}_{\ell-1}]
		\right|^{2}
		+
		2\e[|P-\widehat{P}_{\ell}|^{2}]
		+
		2\e[|P-\widehat{P}_{\ell-1}|^{2}]
		\\&\leq
		c_{0}^{2}
		\{
			h_{\ell}
			+
			h_{\ell-1}
		\}^{2}
		+
		\left\{ \begin{array}{ll}
		\displaystyle
			2C_{\text{EM}}(2,\delta)
			\{
				h_{\ell}^{\frac{\delta}{2}}
				+
				h_{\ell-1}^{\frac{\delta}{2}}
			\},
			&\text{ if } f \in BV(\real^{d}) \cap L^{\infty}(\real^{d}),\\
		\displaystyle
			2C_{\text{EM}}(2)
			\{
				h_{\ell}^{\frac{1}{3}}
				+
				h_{\ell-1}^{\frac{1}{3}}
			\},
			&\text{ if } f \in \{W^{1,\Phi}(\real^{d}) \cup W^{1,p(\cdot)}(\real^{d})\} \cap L^{\infty}(\real^{d}),\\
		\displaystyle
			2C_{\text{EM}}(2)
			\{
				h_{\ell}^{\frac{ps}{p+2}}
				+
				h_{\ell-1}^{\frac{ps}{p+2}}
			\},
			&\text{ if } f \in W^{s,p}(\real^{d}) \cap L^{\infty}(\real^{d}).
		\end{array}\right.
	\end{align*}
	Hence, for $f \in F(\real^{d})$, the computational complexity $C_{\text{MLMC}}$ for $\widehat{Y}$ is bounded from above by
	\begin{align*}
		C_{\text{MLMC}}
		\leq
		\left\{ \begin{array}{ll}
		\displaystyle
			c_{4}
			\varepsilon^{-\frac{6-\delta}{2}},
			&\text{ if } f \in BV(\real^{d}) \cap L^{\infty}(\real^{d}),  \\
		\displaystyle
			c_{4}
			\varepsilon^{-\frac{8}{3}},
			&\text{ if } f \in \{W^{1,\Phi}(\real^{d}) \cup W^{1,p(\cdot)}(\real^{d})\} \cap L^{\infty}(\real^{d}),\\
		\displaystyle
			c_{4}
			\varepsilon^{-(3-\frac{ps}{p+2})},
			&\text{ if } f \in W^{s,p}(\real^{d}) \cap L^{\infty}(\real^{d}).
		\end{array}\right.
	\end{align*}
\end{Eg}

\begin{Eg}\label{Ex_MLMC_1}
	Let $X_{\ell}$ be the Euler--Maruyama scheme with time step $h_{\ell}=T/M^{\ell}$ and $\widehat{Y}_{\ell}$ be the independent estimator defined by \eqref{MLMC_estimator_0}.
	Suppose the coefficients $b$ and $\sigma$ of SDE \eqref{SDE_0} are bounded and Lipschitz continuous in space, and $1/2$-H\"older continuous in time, and $\sigma$ is uniformly elliptic.
	Then for any bounded measurable function $f:\real^{d} \to \real$ and $\delta \in (0,1)$, the weak rate of convergence $\alpha$ in Theorem \ref{MLMC_0} is $\delta/2$, (see Theorem 1.1 in \cite{KoMe17}).
	Therefore, for $f \in F(\real^{d})$, the computational complexity $C_{\text{MLMC}}$ for $\widehat{Y}$ is bounded from above by
	\begin{align*}
		C_{\text{MLMC}}
		\leq
		\left\{ \begin{array}{ll}
		\displaystyle
			c_{4}
			\varepsilon^{-1-\frac{2}{\delta}},
			&\text{ if } f \in BV(\real^{d}) \cap L^{\infty}(\real^{d}),  \\
		\displaystyle
			c_{4}
			\varepsilon^{-2-\frac{4}{3\delta}},
			&\text{ if } f \in \{W^{1,\Phi}(\real^{d}) \cup W^{1,p(\cdot)}(\real^{d})\} \cap L^{\infty}(\real^{d}),\\
		\displaystyle
			c_{4}
			\varepsilon^{-2-\frac{p(1-s)+2}{\delta(p+2)}},
			&\text{ if } f \in W^{s,p}(\real^{d}) \cap L^{\infty}(\real^{d}).
		\end{array}\right.
	\end{align*}
\end{Eg}

\subsection*{Acknowledgements}
The authors would like to thank the anonymous referees for their careful readings and valuable comments.
The authors deeply grateful to Professor Flavien Leger for his valuable comments.
The first author was supported by JSPS KAKENHI Grant Number 19K14552.
The second author was supported by Sumitomo Mitsui Banking Corporation.
The third author was supported by JSPS KAKENHI Grant Number 17J05514.

\end{document}